\def\R{\mathbb{R}}
\def\N{\mathbb{N}}
\def\Z{\mathbb{Z}}
\def\Co{\mathbb{C}}
\newcommand{\I}{\mathcal{I}}
\def\f{\widehat{f}}
\newcommand{\ds}{\displaystyle}
\newcommand{\pvector}[1]{
  \begin{pmatrix}
    #1
  \end{pmatrix}} %
\newcommand{\ddirac}[1]{
  \,\boldsymbol{\delta}\!\pvector{#1}\!} %
\renewcommand{\d}{\,{\rm d}} 
\newcommand{\one}{\mathbbm 1}
\newcommand{\weak}{\rightharpoonup}
\newcommand{\la}{\lambda}
\newcommand{\vphi}{\varphi}
\newcommand{\vrho}{\varrho}
\newcommand{\eps}{\varepsilon}
\providecommand{\ab}[1]{\vert #1\vert}
\providecommand{\abs}[1]{\Bigl\vert #1 \Bigr\vert}
\providecommand{\norma}[1]{\Vert #1 \Vert} 
\providecommand{\Norma}[1]{\Bigl\Vert #1 \Bigr\Vert}
\providecommand{\NOrma}[1]{\biggl\Vert #1 \biggr\Vert}
\newcommand{\supp}{\operatorname{supp}}
\theoremstyle{plain}
\newtheorem{theorem}{Theorem}[section]
\newtheorem{corollary}[theorem]{Corollary}
\newtheorem{lemma}[theorem]{Lemma}
\newtheorem{proposition}[theorem]{Proposition}
\newtheorem{conjecture}[theorem]{Conjecture}
\theoremstyle{definition}
\newtheorem{definition}[theorem]{Definition}
\newtheorem{remark}[theorem]{Remark}
\numberwithin{equation}{section}
\title[Sharp Strichartz inequalities]{Sharp Strichartz inequalities for fractional and higher order Schr\"odinger equations}
\author[Brocchi]{Gianmarco Brocchi}
\address{
        Gianmarco Brocchi:
        School of Mathematics\\
        University of Birmingham\\
        Edgbaston\\
        Birmingham\\
        B15 2TT\\
        England}
\email{g.brocchi@pgr.bham.ac.uk}
\author[Oliveira e Silva]{Diogo Oliveira e Silva}
\address{
        Diogo Oliveira e Silva:
        School of Mathematics\\
        University of Birmingham\\
        Edgbaston\\
        Birmingham\\
        B15 2TT\\
        England,
        and
Hausdorff Center for Mathematics, Endenicher Allee 62, 53115 Bonn, Germany}
\email{d.oliveiraesilva@bham.ac.uk}
\thanks{\it D.O.S. was  supported by the Hausdorff Center for Mathematics and DFG grant CRC 1060.}
 \author[Quilodr\'an]{Ren\'e Quilodr\'an}
\address{Ren\'e Quilodr\'an}
\email{rquilodr@dim.uchile.cl}
\begin{document}

\subjclass[2010]{42B37, 35Q41, 35B38}
\keywords{Sharp Fourier restriction theory, extremizers, Strichartz inequalities, fractional Schr\"odinger equation, convolution of singular measures.}
\begin{abstract}
We investigate a class of sharp Fourier extension inequalities on the planar curves $s=|y|^p$,  $p>1$.
We identify the mechanism responsible for the possible loss of compactness of nonnegative extremizing sequences, and prove that extremizers exist if $1<p<p_0$, for some $p_0>4$.
In particular, this resolves the dichotomy of Jiang, Pausader \& Shao concerning the existence of extremizers for the Strichartz inequality for the fourth order Schr\"odinger equation in one spatial dimension.
One of our tools is a geometric comparison principle for $n$-fold convolutions of certain singular measures in $\R^d$, developed in the companion paper \cite{OSQ18}.
We further show that any extremizer exhibits fast $L^2$-decay in physical space, and so its Fourier transform can be extended to an entire function on the whole complex plane. 
Finally, we investigate the extent to which our methods apply to the case of the planar curves $s=y|y|^{p-1}$,  $p>1$.
\end{abstract}
\maketitle
\tableofcontents

\section{Introduction}

Gaussians are known to extremize certain Strichartz estimates in low dimensions.
Consider, for instance, the Strichartz inequality for the homogenous Schr\"odinger equation in $d$ spatial dimensions,
\begin{equation}\label{eq:StrichartzSchrodinger}
\|e^{-it\Delta} f\|_{L_{x,t}^{2+\frac4d}(\R^{d+1})}\leq {\bf S}(d) \|f\|_{L^2(\R^d)},
\end{equation}
with optimal constant given by
\begin{equation}\label{eq:BestConstantStrSchr}
{\bf S}(d):=\sup_{0\neq f\in L^2}\frac{\|e^{-it\Delta} f\|_{L_{x,t}^{2+\frac4d}(\R^{d+1})}}{\|f\|_{L^2(\R^d)}}.
\end{equation}
That ${\bf S}(d)<\infty$ is  of course due to the original work of Strichartz \cite{St77}, which in turn had precursors in \cite{To75, Se76}.
If $d\in\{1,2\}$, then  Gaussians extremize  \eqref{eq:StrichartzSchrodinger}, and therefore ${\bf S}(1)=12^{-1/{12}}$ and ${\bf S}(2)=2^{-1/2}$.
This was originally established by Foschi \cite{Fo07} and Hundertmark \& Zharnitski \cite{HZ06}, and alternative proofs were subsequently given by Bennett et al. \cite{BBCH09, BBI15} and Gon\c{c}alves \cite{Go17}.
All of these approaches ultimately rely on the fact that the Strichartz exponent $2+\frac4d$ is an even integer if $d\in\{1,2\}$, which in turn allows to recast inequality \eqref{eq:StrichartzSchrodinger} in convolution form. 
This is a powerful technique that has proved very successful in tackling a number of problems in sharp Fourier restriction theory, see the recent survey \cite{FOS17} and the references therein.

In recent work of the second and third authors \cite{OSQ16}, we explored the convolution structure of a family of Strichartz inequalities for higher order Schr\"odinger equations in two spatial dimensions in order to answer a question concerning the existence of extremizers that had appeared in the previous literature.
Our purpose with the present work is three-fold.
Firstly, we resolve the dichotomy from \cite{JPS10} concerning the existence of extremizers for the Strichartz inequality for the fourth order Schr\"odinger equation in one spatial dimension.
This is related to the Fourier extension problem on the planar curve $s=y^4$.
Secondly, we study similar questions in the more general setting of the Fourier extension problem on the curve $s=|y|^p$, for arbitrary $p>1$.
We also consider {\it odd} curves $s=y\ab{y}^{p-1}$, $p>1$, the case $p=3$ relating to the Airy--Strichartz inequality \cite{FV18, FS17, Sh09}.
 Lastly, we study super-exponential decay and analyticity of the corresponding extremizers and their Fourier transform via a bootstrapping procedure.\\

In \cite{JPS10}, Jiang, Pausader \& Shao considered the fourth order Schr\"odinger equation with $L^2$ initial datum in one spatial dimension,
\begin{equation}\label{eq:4thOrderSchroedinger}
   \begin{cases}
   i \partial_t u-\mu\partial_x^2 u+\partial_x^4 u=0,\quad (x,t)\in\R\times\R,\\
     u(\cdot,0)=f\in L_x^2(\R),
  \end{cases} 
\end{equation}
where $u:\R\times\R\to\Co$, and $\mu\geq 0$.
By scaling, one may restrict attention to $\mu\in\{0,1\}$.
The solution of the Cauchy problem \eqref{eq:4thOrderSchroedinger} can be written in terms of the propagator 
$$u(x,t)=e^{it(\partial_x^4-\mu\partial_x^2)}f(x)=\frac{1}{2\pi}\int_\R e^{ix\xi} e^{it(\xi^4+\mu\xi^2)}\widehat{f}(\xi)\d\xi,$$
where the spatial Fourier transform is defined as\footnote{The Fourier transform will occasionally be denoted by $\mathcal{F}(f)=\widehat{f}$.}
$\widehat{f}(\xi):=\int_\R e^{-ix\xi} f(x) \d x$.
The solution disperses as $|t|\to\infty$, and consequently the following Strichartz inequality due to Kenig, Ponce \& Vega \cite[Theorem 2.1]{KPV91} holds:\footnote{Given $\mu\in\{0,1\}$ and $\alpha\in\R$, we follow the notation from  \cite{JPS10} and denote by $D_\mu^\alpha$  the differentiation operator
$$D_\mu^\alpha f(x):=\frac1{2\pi}\int_\R e^{ix\xi} (\mu+6\xi^2)^{\frac{\alpha}2} \widehat{f}(\xi)\d\xi.$$}
\begin{equation}\label{eq:Strichartz4thOSchr}
\|D_\mu^{\frac13} e^{it(\partial_x^4-\mu\partial_x^2)} f\|_{L_{x,t}^6(\R^{1+1})}\lesssim \|f\|_{L^2(\R)}.
\end{equation}
The main result of \cite{JPS10} is a linear profile decomposition for equation \eqref{eq:4thOrderSchroedinger}, which uses a refinement of the Strichartz inequality \eqref{eq:Strichartz4thOSchr} in the scale of Besov spaces, together with  improved localized Fourier restriction estimates.
As a consequence, the authors of \cite{JPS10} establish a dichotomy result for the existence of extremizers for  \eqref{eq:Strichartz4thOSchr} when $\mu=0$, which can  be summarized as follows: 
Consider the sharp inequality in multiplier form
\begin{equation}\label{eq:SharpStr4thOSchrMu0}
\|D_0^{\frac13} e^{it\partial_x^4} f\|_{L_{x,t}^6(\R^{1+1})}\leq {\bf M} \|f\|_{L^2(\R)},
\end{equation}
with optimal constant given by
\begin{equation}\label{eq:BestConstantStr4thOSchr}
{\bf M}:=\sup_{0\neq f\in L^2}\frac{\|D_0^{\frac13}e^{it\partial_x^4} f\|_{L_{x,t}^6(\R^{1+1})}}{\|f\|_{L^2(\R)}}.
\end{equation}
Then \cite[Theorem 1.8]{JPS10} states that either an extremizer for \eqref{eq:SharpStr4thOSchrMu0} exists, or there exist a sequence $\{a_n\}\subset\R$ satisfying $|a_n|\to\infty$, as $n\to\infty$, and a function $f\in L^2$, such that
$${\bf M}=\lim_{n\to\infty}\frac{\|D_0^{\frac13}e^{it\partial_x^4} (e^{ia_nx} f)\|_{L_{x,t}^6(\R^{1+1})}}{\|f\|_{L^2(\R)}}.$$
In the latter case, one necessarily has ${\bf M}={\bf S}(1)$, where ${\bf S}(1)$ denotes the optimal constant  defined in \eqref{eq:BestConstantStrSchr}. 
Our first main result resolves this dichotomy.
\begin{theorem}\label{thm:Thm1}
There exists an extremizer for \eqref{eq:SharpStr4thOSchrMu0}.
\end{theorem}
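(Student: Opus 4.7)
My plan is to reduce Theorem \ref{thm:Thm1} to a strict inequality between sharp constants, and then establish that inequality via a convolution/comparison argument.

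\medskip

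\noindent\textbf{Step 1: Reduction via the JPS dichotomy.} The dichotomy \cite[Theorem 1.8]{JPS10} already does most of the work: it asserts that either an extremizer for \eqref{eq:SharpStr4thOSchrMu0} exists, or $\mathbf{M}=\mathbf{S}(1)$ because an extremizing sequence concentrates at arbitrarily high frequencies $|a_n|\to\infty$, where the curve $s=y^4$ locally resembles a rescaled parabola. It therefore suffices to establish the strict inequality
\[
\mathbf{M}\,>\,\mathbf{S}(1)=12^{-1/12}.
\]
Once this is proved, the concentration alternative is ruled out and Theorem \ref{thm:Thm1} follows.

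\medskip

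\noindent\textbf{Step 2: Recasting both constants in convolution form.} The Strichartz exponent $6$ is an even integer, so Plancherel converts the $L^6$ norm into an $L^2$ norm of a triple convolution. Let $\sigma_4$ denote the projection measure on the curve $\Gamma_4=\{(y,y^4):y\in\R\}$, weighted so that the $D_0^{1/3}$-factor is absorbed into the measure (this produces the flat-in-$y$ density that makes the extension operator an isometric image of $L^2(\R,dy)$). Writing $g=\widehat{f}$, one has
\[
\bigl\|D_0^{1/3} e^{it\partial_x^4} f\bigr\|_{L^6_{x,t}}^{6}
\,=\,(2\pi)^{-5}\,\bigl\|g\sigma_4 * g\sigma_4 * g\sigma_4\bigr\|_{L^2(\R^{1+1})}^{2},
\]
so that $\mathbf{M}^6$ is the supremum of this quantity over $\|g\|_{L^2}=1$. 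An entirely analogous identity, due to Foschi \cite{Fo07}, expresses $\mathbf{S}(1)^6$ as a triple convolution of the parabolic arclength measure $\sigma_2$ on $\Gamma_2=\{(y,y^2):y\in\R\}$.

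\medskip

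\noindent\textbf{Step 3: Strict inequality via the geometric comparison principle.} Here I would invoke the comparison principle from the companion paper \cite{OSQ18} applied to the triple self-convolutions $\sigma_4\ast\sigma_4\ast\sigma_4$ and $\sigma_2\ast\sigma_2\ast\sigma_2$. The point is that the convolution measure on a convex curve encodes the number and Jacobian weight of representations of a point as a threefold sum on the curve; for $\Gamma_4$ the four real roots of a quartic (as opposed to two for the parabola) together with an explicit comparison of Jacobians should give a pointwise lower bound
\[
\sigma_4^{\ast 3}\bigl(x,t\bigr)\,\geq\,C\,\sigma_2^{\ast 3}\bigl(\Phi(x,t)\bigr)
\]
on a suitable region, for an appropriate affine map $\Phi$ relating the rescaled parabolic geometry to $\Gamma_4$. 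Choosing $g$ to be a Gaussian (which is known to extremize the parabolic side) and translating it into a test function on $\Gamma_4$ then yields the strict gain $\mathbf{M}>\mathbf{S}(1)$.

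\medskip

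\noindent\textbf{Main obstacle.} Step 1 is immediate from \cite{JPS10} and Step 2 is a direct computation, so the real work is in Step 3: verifying that the geometric comparison from \cite{OSQ18} really does produce a \emph{strict} gain after accounting for the $D_0^{1/3}$ weight on the quartic side (which rescales, unfavorably in some ranges, the natural measure on $\Gamma_4$), and that the extra representations of a triple sum on $\Gamma_4$ are not cancelled by the smaller Jacobian at the concentration regime $|y|\to\infty$. Managing these competing effects, rather than any delicate functional-analytic argument, is what I expect to be the core of the proof.
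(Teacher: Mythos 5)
Your Steps 1 and 2 are sound and match the paper's strategy: Theorem \ref{thm:Thm1} is indeed reduced to the strict inequality ${\bf C}_4^6>\frac{2\pi}{12\sqrt{3}}$, equivalently ${\bf M}>{\bf S}(1)$ after the normalization ${\bf E}_p^6=(2\pi)^2{\bf C}_p^6=(2\pi)^36^{1-p/2}{\bf M}_p^6$; the paper re-proves and generalizes the dichotomy as Theorem \ref{thm:Thm2}, but for $p=4$ citing \cite{JPS10} directly is legitimate. The gap is in Step 3, and it is not merely technical. The threshold $\frac{2\pi}{\sqrt{3}p(p-1)}$ is exactly the boundary value of the triple convolution $w\nu_p\ast w\nu_p\ast w\nu_p$ (Proposition \ref{prop:convPowers}(e)), while the parabolic convolution $\sigma_2^{\ast 3}$ is \emph{constant} on its support and equal to its own threshold. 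A pointwise comparison $\sigma_4^{\ast 3}\geq C\,\sigma_2^{\ast 3}\circ\Phi$ calibrated so that the right-hand side matches the threshold can therefore reproduce at best the non-strict bound ${\bf C}_4^6\geq\frac{2\pi}{12\sqrt{3}}$, which already follows from concentration (Lemma \ref{lem:p-upper-bound-concentration}) and does not resolve the dichotomy. Moreover, for large $p$ the convolution $w\nu_p^{\ast 3}$ decays towards the $\tau$-axis and is \emph{not} bounded below by its boundary value (see Figure \ref{fig:graph_triple_convolution_powers} and the surrounding Remark); this is precisely why the paper only obtains a finite range $p\in(1,p_0)$, $p_0\approx 4.803$. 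Any soft, $p$-insensitive geometric comparison with the parabola would prove existence for all $p$, which is incompatible with the expected concentration for large $p$ (cf.\ Conjecture \ref{cjct:nonexistence} in the odd setting). Your heuristic about "four real roots of a quartic" also misreads the convolution: $w\nu_4^{\ast 3}(x,t)$ is an integral with Jacobian weights over a one-parameter family of representations, not a root count.

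What the paper actually does at this step is quantitative: it evaluates $\Phi_p(f)$ exactly for the trial function $f(y)=e^{-|y|^p}|y|^{(p-2)/6}$ as a series of squares of Legendre coefficients of $g_p(t)=(w\nu_p^{\ast3})(3^{1-1/p}t,1)$ (Proposition \ref{prop:second-lower-bound-Qp}), whose zeroth term is the weighted mean \eqref{eq:boundGammaPP} -- already insufficient at $p=4$ -- and then checks numerically that the partial sum with $N=15$ terms exceeds the threshold for $2<p<p_0$. The comparison principle of \cite{OSQ18} enters only to supply the qualitative properties and the boundary value of the convolution measure, not the strict gain. As written, your Step 3 cannot be completed; some genuinely quantitative lower bound on the weighted $L^2$ average of $w\nu_4^{\ast3}$ is unavoidable.
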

\noindent Theorem \ref{thm:Thm1} will follow from a more general result which we now introduce.
As noted in \cite[\S 2]{KPV91}, the operator $D_0^{1/3}e^{it\partial_x^4}$ is nothing but a constant multiple of the Fourier transform at the point {$(-x,-t)\in\R^2$} of the singular measure 
\begin{equation}\label{eq:DefSigma}
\d\sigma_4(y,s)=\ddirac{s-y^4} |y|^{\frac 13}\d y\d s
\end{equation}
defined on the curve $s=y^4$.
 As in \cite[\S 6.4]{OSQ16}, one is naturally led to consider generic power curves $s=|y|^p$.
The corresponding inequality is 
\begin{equation}\label{eq:SharpMultiplierFormGenP}
\|{\mathcal M}_p(f)\|_{L_{x,t}^6(\R^{1+1})}\leq {\bf M}_p \|f\|_{L^2(\R)},
\end{equation}
where the multiplier operator ${\mathcal M}_p$ is defined as 
$${\mathcal M}_p(f)(x,t)=D_0^{\frac{p-2}6} e^{it|\partial_x|^p} f(x).$$
Inequality \eqref{eq:SharpMultiplierFormGenP} can be equivalently restated as a Fourier extension inequality,
\begin{equation}\label{eq:SharpExtensionFormGenP}
\|\mathcal{E}_p(f)\|_{L^6(\R^2)}\leq {\bf E}_p \|f\|_{L^2(\R)},
\end{equation}
or in convolution form as
\begin{equation}\label{eq:SharpConvolutionFormGenP}
\|f\sigma_p\ast f\sigma_p\ast f\sigma_p\|_{L^2(\R^2)}\leq {\bf C}_p^3\|f\|^3_{L^2(\R)}.
\end{equation}
Here, the singular measure $\sigma_p$ is defined in accordance with \eqref{eq:DefSigma} by
\begin{equation}\label{eq:DefSigmaMeasure}
\d\sigma_p(y,s)=\ddirac{s-|y|^p} |y|^{\frac {p-2}6}\d y\d s,
\end{equation}
and the Fourier extension operator $\mathcal{E}_p(f)=\mathcal{F}(f\sigma_p)(-\cdot)$ is given by 
\begin{equation}\label{eq:DefFSigmaHat}
\mathcal{E}_p(f)(x,t)=\int_\R e^{ix y}e^{i t |y|^p} |y|^{\frac{p-2}6} f(y)\d y,
\end{equation}
so that $6^{\frac{p-2}{12}}{\mathcal E}_p(\widehat{f})=2\pi{\mathcal M}_p(f)$. 
If $f$ is an extremizer for \eqref{eq:SharpExtensionFormGenP}, then $f$ is likewise an extremizer for \eqref{eq:SharpConvolutionFormGenP}, and $\mathcal{F}^{-1}(f)$ is an extremizer for \eqref{eq:SharpMultiplierFormGenP}.
Thus these three existence problems are essentially equivalent.
The convolution form \eqref{eq:SharpConvolutionFormGenP} also shows that the search for extremizers can be restricted to the class of {\it nonnegative} functions.
An application of Plancherel's Theorem further reveals that the corresponding optimal constants satisfy 
$${\bf E}_p^6=(2\pi)^2{\bf C}_p^6=(2\pi)^3 6^{1-\frac{p}{2}}{\bf M}_p^6.$$

Our next result extends the dichotomy proved in \cite[Theorem 1.8]{JPS10} to the case of arbitrary exponents $p>1$.
It states that one of  two possible scenarios occurs,  {\it compactness} or {\it concentration} at a point.
We make the latter notion precise.
 \begin{definition}\label{def:concentration-point}
	A sequence of functions $\{f_n\}\subset L^2(\R)$ {\it concentrates at a point} $y_0\in\R$ if,
	for every $\eps,\rho>0$, there exists $N\in\N$ such that, for every $n\geq N$, 
	\[ \int_{\ab{y-y_0}\geq \rho}\ab{f_n(y)}^2\d y<\eps\norma{f_n}_{L^2(\R)}^2.\]
\end{definition}

\noindent We choose to phrase our second main result in terms of the convolution inequality \eqref{eq:SharpConvolutionFormGenP} because, as we shall see, condition \eqref{eq:IneqCriticalValueCp} has a very simple geometric meaning in terms of the boundary value of the relevant 3-fold convolution measure.

\begin{theorem}\label{thm:Thm2}
Let $p>1$. If
\begin{equation}\label{eq:IneqCriticalValueCp}
{\bf C}_p^6>\frac{2\pi}{\sqrt{3} p(p-1)},
\end{equation}
then any extremizing sequence of nonnegative functions in $L^2(\R)$ for \eqref{eq:SharpConvolutionFormGenP} is precompact, after normalization and scaling.
In this case, extremizers for \eqref{eq:SharpConvolutionFormGenP} exist.
If instead equality holds in \eqref{eq:IneqCriticalValueCp} then, given any $y_0\in\R$, there exists an extremizing sequence for \eqref{eq:SharpConvolutionFormGenP} which concentrates at $y_0$.
\end{theorem}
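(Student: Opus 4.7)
The plan is a concentration-compactness analysis of nonnegative extremizing sequences for \eqref{eq:SharpConvolutionFormGenP}, organized around the single non-trivial symmetry of the trilinear functional: the parabolic dilation $f(y) \mapsto \lambda^{1/2} f(\lambda y)$, which together with the matching rescaling $(y,s) \mapsto (\lambda y, \lambda^p s)$ of the curve $s = |y|^p$ preserves both sides of \eqref{eq:SharpConvolutionFormGenP}. After normalizing $\|f_n\|_{L^2} = 1$ and using this scaling to fix a canonical scale, a Lions-type dichotomy applied to $\{f_n\}$ yields two possibilities: (a) precompactness of a subsequence in $L^2(\R)$, or (b) concentration at a point $y_0 \in \R$ in the sense of Definition~\ref{def:concentration-point}. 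The intermediate vanishing/dichotomy scenario should be ruled out by a refined trilinear Strichartz estimate of the type used in \cite{JPS10}: if no $L^2$ mass accumulates near a single point, one bounds $\|f_n\sigma_p \ast f_n\sigma_p \ast f_n\sigma_p\|_{L^2}$ strictly below ${\bf C}_p^3 \|f_n\|_{L^2}^3$. A profile decomposition in the spirit of \cite{JPS10}, combined with the geometric comparison principle for three-fold convolutions from the companion paper \cite{OSQ18}, is the natural framework.

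The quantitative heart of the argument is to compute the limit of the convolution norm in the concentration regime. Suppose $f_n(y) = \lambda_n^{1/2} g(\lambda_n(y-y_0)) + o_{L^2}(1)$ with $\lambda_n \to \infty$ and $y_0 \neq 0$. Taylor-expanding the phase,
$$|y|^p = |y_0|^p + p\,\sign(y_0)|y_0|^{p-1}(y-y_0) + \tfrac{1}{2} p(p-1)|y_0|^{p-2}(y-y_0)^2 + O(\lambda_n^{-3}),$$
the affine part is absorbed by the translation and Galilean symmetries of $\R^2$ (which leave the $L^2$-norm of the three-fold convolution invariant). The local curve converges, after rescaling by $\lambda_n$, to a parabola of curvature $p(p-1)|y_0|^{p-2}$, and the weight $|y|^{(p-2)/6}$ converges uniformly on the support of $g$ to the constant $|y_0|^{(p-2)/6}$. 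Rescaling $g$ so as to absorb both the curvature and the weight constants, and invoking Foschi's evaluation ${\bf S}(1)^6 = (2\sqrt{3})^{-1}$ of the sharp one-dimensional Schr\"odinger-Strichartz constant, the $y_0$-dependent factors cancel and one arrives at
$$\lim_{n\to\infty} \|f_n\sigma_p \ast f_n\sigma_p \ast f_n\sigma_p\|_{L^2(\R^2)}^2 = \frac{2\pi}{\sqrt{3}\,p(p-1)}\,\|g\|_{L^2(\R)}^6.$$
This disappearance of $y_0$ is precisely why the critical constant in \eqref{eq:IneqCriticalValueCp} is independent of the concentration point, and it should match the boundary value of $\sigma_p \ast \sigma_p \ast \sigma_p$ extracted via the tool from \cite{OSQ18}. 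The case $y_0 = 0$ is either subsumed into the scaling orbit of the case $y_0 \neq 0$ (by taking $\lambda_n \to \infty$ in a fixed approximate extremizer) or handled by a short approximation argument.

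The theorem then splits into two deductions. If ${\bf C}_p^6 > \frac{2\pi}{\sqrt{3}p(p-1)}$, the above limit shows that any concentrating sequence achieves convolution norm squared at most $\frac{2\pi}{\sqrt{3}p(p-1)} < {\bf C}_p^6$, excluding scenario (b); scenario (a) must hold, yielding precompactness and hence existence of extremizers. For the equality case, I would \emph{reverse} the computation: fix any $y_0 \neq 0$, take $g$ to be an appropriately rescaled Foschi-Hundertmark-Zharnitski Gaussian extremizer for the one-dimensional Schr\"odinger-Strichartz inequality on the tangent parabola at $y_0$, and set $f_n(y) = \lambda_n^{1/2} g(\lambda_n(y-y_0))$ with $\lambda_n \to \infty$; the limit computation above shows that $\{f_n\}$ is extremizing and concentrates at $y_0$. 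The main technical obstacle is the limit computation itself, which requires careful bookkeeping of Jacobian, curvature, and weight factors through the successive affine changes of variables to verify that they combine into a single $y_0$-independent constant; a secondary concern is rigorously excluding pathological multi-bubble loss-of-compactness mechanisms, for which a full profile decomposition tailored to the exponent $p$ and adapted from \cite{JPS10} is required.
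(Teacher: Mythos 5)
Your outline reproduces the correct architecture (a Lions-type alternative between precompactness and concentration, plus a computation of the concentration threshold), but two of its load-bearing steps are not proofs and one is incorrect as stated. First, the displayed limit
$\lim_{n}\norma{f_n\sigma_p\ast f_n\sigma_p\ast f_n\sigma_p}_{L^2}^2=\tfrac{2\pi}{\sqrt{3}p(p-1)}\norma{g}_{L^2}^6$
cannot hold with equality for an arbitrary profile $g$: after the parabolic rescaling the left side converges to a constant times $\norma{g\nu\ast g\nu\ast g\nu}_{L^2}^2$ for the limiting parabola, and this equals the right side only when $g$ is a (modulated, rescaled) Gaussian; for general $g$ one only gets $\leq$, by Foschi's theorem. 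More seriously, concentration in the sense of Definition \ref{def:concentration-point} does \emph{not} supply any profile $g$ with $f_n=\lambda_n^{1/2}g(\lambda_n(\cdot-y_0))+o_{L^2}(1)$, so your computation does not bound arbitrary concentrating sequences. The paper's Lemma \ref{lem:p-upper-bound-concentration} avoids profiles entirely: for any concentrating sequence, $f_n\sigma_p\ast f_n\sigma_p\ast f_n\sigma_p$ is supported near the boundary point $(3y_0,3|y_0|^p)$ of $E_p$, where the density of $\sigma_p\ast\sigma_p\ast\sigma_p$ (computed in \cite{OSQ18}, see \eqref{eq:BoundaryValues3fold}) equals $\tfrac{2\pi}{\sqrt{3}p(p-1)}$, and Cauchy--Schwarz then gives the upper bound \eqref{eq:p-value_boundary} for every concentrating sequence at once. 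For $1<p<2$ an extra reduction to the projection measure is needed because the weight $|y|^{(p-2)/6}$ is singular at the origin — a point your ``short approximation argument'' for $y_0=0$ glosses over; the paper sidesteps it by rescaling so that concentration always occurs at $y_0=1$.

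Second, you defer the entire compactness-versus-concentration alternative to ``a full profile decomposition adapted from \cite{JPS10}'', which is precisely the machinery the paper is designed to avoid. The actual argument runs through: (a) the bilinear estimates of Propositions \ref{prop:bilinear-p-power} and \ref{prop:unc-bilinear-p-power} and the cap bounds of Propositions \ref{cap-bound-p-power} and \ref{prop:l1-cap-bound-p-power}; (b) a variant of Lions' lemma (Proposition \ref{prop:metric-concentration-compactness-lemma}) applied with \emph{dyadic pseudometrics} centered first at $0$ (to get uniform $L^2$-localization after rescaling, Proposition \ref{prop:dyadic-localization}) and then at $1$ (Proposition \ref{prop:small-cap-implies-concentration}); dichotomy is excluded by the bilinear decay together with strict superadditivity $1>\alpha^3+(1-\alpha)^3$, and vanishing is excluded by the $L^1$ cap bound; (c) a Br\'ezis--Lieb variant (Proposition \ref{prop:new-fvv}) upgrading a nonzero weak limit to strong $L^2$ convergence, where the nonnegativity of $f_n$ is used to guarantee the weak limit is nonzero. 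None of these steps is supplied or replaced in your proposal, so as written it is an announcement of a strategy rather than a proof; moreover the strategy it announces (profile decomposition for general $p>1$) is itself a substantial unproven input.
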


\noindent
A few remarks may help to further orient the reader. 
Firstly,  if $p=1$, then the curve $s=|y|$  has no curvature, and no non-trivial Fourier extension estimate can hold. 
 Secondly, if equality holds in \eqref{eq:IneqCriticalValueCp}, then Theorem \ref{thm:Thm2} does {\it not} guarantee the non-existence of extremizers. 
Indeed, ${\bf C}_2^6=\pi/\sqrt{3}$, and Gaussians are known to extremize \eqref{eq:SharpConvolutionFormGenP} when $p=2$.
Various results of a similar flavour to that of Theorem \ref{thm:Thm2} have appeared in the recent literature. 
They are typically derived from a sophisticated application of concentration-compactness techniques \cite{CS12a, Sh16}, a full profile decomposition \cite{JPS10, JSS14, Sh09}, or the missing mass method as in \cite{FLS16, FS17}.
We introduce a new variant which follows the spirit of the celebrated works of Lieb \cite{BL83, Li83} and Lions \cite{Li84,Li84+}. 
It seems more elementary, and may be easier to adapt to other manifolds. 
The proof of Theorem \ref{thm:Thm2} involves a variant of Lions' concentration-compactness lemma \cite{Li84}, a variant of the corollary of the Br\'ezis--Lieb lemma from \cite{FVV11},  bilinear extension estimates, and a refinement of inequality \eqref{eq:SharpExtensionFormGenP} over a suitable {\it cap space}.

In a range of exponents that includes the case $p=4$,
we are able to resolve the dichotomy posed by Theorem \ref{thm:Thm2}. 

\begin{theorem}\label{thm:Thm3}
There exists $p_0>4$ such that, for every $p\in(1,p_0)\setminus\{2\}$, the strict  inequality \eqref{eq:IneqCriticalValueCp}
holds. In particular,  if $p\in (1,p_0)$, then there exists an extremizer for \eqref{eq:SharpConvolutionFormGenP}.
\end{theorem}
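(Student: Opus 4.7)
The strategy is to reduce Theorem \ref{thm:Thm3} to verifying the strict inequality \eqref{eq:IneqCriticalValueCp} on $(1,p_0)\setminus\{2\}$: Theorem \ref{thm:Thm2} then immediately produces an extremizer for \eqref{eq:SharpConvolutionFormGenP} in this range, and the case $p=2$ is handled by the classical extremality of Gaussians. The plan is to lower bound $\mathbf{C}_p^6$ by inserting a Gaussian ansatz into the quotient
\[
R_p(f):=\frac{\|f\sigma_p\ast f\sigma_p\ast f\sigma_p\|_{L^2(\R^2)}^2}{\|f\|_{L^2(\R)}^6},
\]
and to compare the resulting lower bound to $\frac{2\pi}{\sqrt 3\,p(p-1)}$. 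The Gaussian family is a natural choice because $\sup_{a>0} R_2(e^{-a y^2})=\mathbf{C}_2^6=\pi/\sqrt 3$, which matches the right-hand side of \eqref{eq:IneqCriticalValueCp} precisely at $p=2$.

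I would proceed in three stages. First, I would expand the triple convolution using the parametrization of the curve $s=|y|^p$ and absorb the scale parameter of the Gaussian by a change of variables, reducing $\Phi(p):=\sup_{a>0} R_p(e^{-a y^2})$ to a single one-parameter quantity depending only on $p$. Second, near $p=2$, I would Taylor-expand both $\Phi(p)$ and $\frac{2\pi}{\sqrt 3\,p(p-1)}$ about $p=2$; since the two agree at $p=2$, the inequality $\Phi(p)>\frac{2\pi}{\sqrt 3\,p(p-1)}$ for $p\neq 2$ would follow from showing that the lowest non-vanishing coefficient of the difference is strictly positive, which is a perturbative computation around the extremal Gaussian. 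Third, for $p$ ranging through $p=4$ and slightly beyond, I would verify the inequality by direct calculation (if necessary, supplemented by numerical estimates with rigorous error bounds), and then invoke continuity of both sides of \eqref{eq:IneqCriticalValueCp} in $p$ to obtain an open set on which strict inequality holds. This open set would contain a neighborhood of $[2,4]$, hence take the form $(1,p_0)\setminus\{2\}$ with $p_0>4$.

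The main obstacle is the absence of a clean closed-form expression for $\Phi(p)$ outside the special value $p=2$, since the triple convolution $\sigma_p\ast\sigma_p\ast\sigma_p$ does not simplify explicitly for general $p$. To circumvent this, I would invoke the geometric comparison principle for $n$-fold convolutions of singular measures established in the companion paper \cite{OSQ18}: in favorable cases this principle compares $\sigma_p\ast\sigma_p\ast\sigma_p$, in a suitable pointwise or weighted sense, to its counterpart on a parabola, thereby reducing the analysis of $\Phi(p)$ to a quantitative perturbation of the already well-understood $p=2$ case. A secondary delicate point is the behavior as $p\to 1^+$: both sides of \eqref{eq:IneqCriticalValueCp} diverge because the curve $s=|y|$ has no curvature, and one must select the Gaussian scale parameter so as to match the rate of divergence and preserve the strict inequality across the whole interval $(1,p_0)$.
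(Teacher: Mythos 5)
Your reduction of the theorem to the strict inequality \eqref{eq:IneqCriticalValueCp} via Theorem \ref{thm:Thm2}, with $p=2$ handled by the extremality of Gaussians, is exactly right, and your third stage (an explicit lower bound at and beyond $p=4$, made rigorous and extended by continuity of the lower bound in $p$) is in the spirit of what the paper does. But the core of your argument has a genuine gap. Near $p=2$ you propose to Taylor-expand the Gaussian-based lower bound $\Phi(p)$ and the threshold $\tfrac{2\pi}{\sqrt3\,p(p-1)}$, note that they agree at $p=2$, and conclude strict inequality for $p\neq 2$ from positivity of ``the lowest non-vanishing coefficient of the difference.'' If that lowest non-vanishing coefficient is the linear one, the difference changes sign at $p=2$, and you obtain the inequality on only one side of $2$; to win on both sides you would need the linear term to vanish and the quadratic term to be positive, which you neither claim nor verify. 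The structure of the paper's proof indicates that no single trial family is expected to work on both sides: for $1<p<2$ the authors use functions concentrating at $y_0=1$, whose associated functional tends to the threshold as the concentration parameter degenerates with derivative proportional to $(2-p)(2p-1)$ --- positive precisely when $1<p<2$ (Lemma \ref{lem:perturbative}) --- whereas for $p>2$ that sign flips and they must switch to the global trial function $f(y)=e^{-|y|^p}|y|^{(p-2)/6}$.

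A second, related gap is computability. For $p\neq 2$ a Gaussian does not make $\|f\sigma_p\ast f\sigma_p\ast f\sigma_p\|_{L^2}^2$ tractable: the paper's trial function is chosen so that $f\sigma_p=e^{-\tau}\sqrt{w}\,\cdot\sqrt{w}\,\nu_p$ on the curve, whence the triple convolution equals $e^{-\tau}(w\nu_p)^{\ast 3}$ and the degree-zero homogeneity of $(w\nu_p)^{\ast 3}$ separates variables; this is what yields the Legendre expansion \eqref{eq:series-expansion-p-power} as a series of \emph{nonnegative} terms, so that any truncation is a rigorous lower bound amenable to numerical evaluation up to $p_0\approx 4.803$. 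Your proposed substitute --- invoking the comparison principle of \cite{OSQ18} to compare $\sigma_p\ast\sigma_p\ast\sigma_p$ with the parabola case --- is not how that principle functions here (in this paper \cite{OSQ18} supplies continuity and the boundary value \eqref{eq:BoundaryValues3fold}, i.e.\ the threshold itself, not a quantitative lower bound beating it), so the main analytic difficulty remains unresolved. Finally, the worry about $p\to1^+$ is moot: the claim is pointwise in $p$, and no uniformity in $p$ is required.
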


\noindent Our method yields $p_0\approx 4.803$  with 3 decimal places, and effectively computes arbitrarily good lower bounds for the ratio of $L^2$-norms in \eqref{eq:SharpConvolutionFormGenP} via expansions of suitable trial functions in the orthogonal basis of Legendre polynomials. 
We remark that the value $p_0\approx 4.803$ is suboptimal, in the sense that a natural refinement of our argument allows to increase this value to $\approx 5.485$, see \S \ref{sec:other-powers-v2} below.

Once the existence of extremizers has been established, their properties are typically deduced from the study of the associated Euler--Lagrange equation.
Following this paradigm, we show that any extremizer of \eqref{eq:SharpExtensionFormGenP} decays super-exponentially fast in $L^2$, which reflects the analiticity of its Fourier transform.
This is the content of our next result.

\begin{theorem}\label{thm:Thm4}
Let $p>1$.
If $f$ is an extremizer for \eqref{eq:SharpExtensionFormGenP}, then there exists $\mu_0>0$, such that
$$x\mapsto e^{\mu_0|x|^p} {f}(x)\in L^2(\R).$$
In particular, its Fourier transform $\widehat{f}$ can be extended to an entire function on $\Co$. 
\end{theorem}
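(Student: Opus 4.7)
The plan is to derive the Euler--Lagrange equation for the extremizer $f$ and to bootstrap its decay and regularity in two stages: first to the Schwartz class, and then to the super-exponential rate $e^{-\mu_0|y|^p}$. By the convolution formulation \eqref{eq:SharpConvolutionFormGenP} we may take $f\geq 0$. Setting $F:=\mathcal{E}_p(f)$ and $\mu:=f\sigma_p$, a critical-point calculation yields
\[
\lambda\, f(y) = |y|^{\frac{p-2}{6}}\int_{\R^2} e^{-i(xy+t|y|^p)}\,|F(x,t)|^4 F(x,t)\,\d x\,\d t,\qquad\lambda>0,
\]
or equivalently, by Plancherel, the pointwise identity
\[
\lambda\, f(y)\,|y|^{\frac{p-2}{6}} = c\,\bigl(\mu^{*3}\ast\widetilde{\mu}^{*2}\bigr)(y,|y|^p),
\]
where $\widetilde{g}(z):=\overline{g(-z)}$ and $c>0$ is an absolute constant.

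The first stage upgrades $f\in L^2$ to a Schwartz function. Iterated convolutions of $\sigma_p$ are smoother than $\sigma_p$ itself: $\sigma_p\ast\sigma_p$ already admits a locally bounded density off the boundary of its support, and the geometric comparison principle of the companion paper \cite{OSQ18} controls higher-order convolutions of $\sigma_p$ and its reflections. Plugging the resulting bounds into the convolution form of the Euler--Lagrange equation forces $f\in L^\infty(\R)$; interpolation yields $f\in L^q(\R)$ for every $q\in[2,\infty]$. A weight-to-derivative bootstrap---multiplying $f$ by $y^n$ on the measure side corresponds to taking $n$ $x$-derivatives on the Fourier side, and integrability is preserved at every step---produces polynomial decay of arbitrary order together with $C^\infty$ smoothness.

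The second stage delivers super-exponential decay by a contour shift tailored to the phase $e^{it|y|^p}$. For every fixed $x\in\R$, the change of variables $\tau=|y|^p$ exhibits $t\mapsto F(x,t)$ as the inverse Fourier transform of a measure supported in $[0,\infty)$; by Paley--Wiener, it extends holomorphically to the upper half-plane $\{\Im\,t>0\}$, and by Schwarz reflection $\overline F(x,\cdot)$ extends holomorphically to $\{\Im\,t<0\}$. Using the Schwartz decay of $f$ from the previous stage, these extensions decay faster than any polynomial in $x$, uniformly for $t$ in horizontal strips of small width $\eta>0$. Decomposing $|F|^4F = F^3\cdot\overline F^2$ and deforming the $t$-contour in the Euler--Lagrange integral in accordance with the respective analyticity---$t\mapsto t+i\eta$ in $F^3$ and $t\mapsto t-i\eta$ in $\overline F^2$---pulls out an overall factor of $e^{-\eta|y|^p}$ and yields the pointwise bound
\[
|f(y)|\;\leq\;C_\eta\,|y|^{\frac{p-2}{6}}\,e^{-\eta|y|^p}
\]
for some $\eta>0$. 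The conclusion $e^{\mu_0|x|^p}f(x)\in L^2(\R)$ then holds for any $\mu_0\in(0,\eta)$. The entire extension of $\widehat f$ is immediate: for every $\zeta\in\Co$, the integral $\int_\R e^{-i\zeta x}f(x)\,\d x$ converges absolutely by Cauchy--Schwarz against the weight (note $e^{|\Im\zeta||x|}e^{-\mu_0|x|^p}\in L^2$ since $p>1$), and holomorphy follows from Morera's theorem.

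The principal obstacle is the simultaneous contour deformation: $F^3$ and $\overline F^2$ enjoy holomorphy in opposing half-planes, so a naive two-sided shift is not valid when combined in one double integral. The resolution rests on recasting the argument in the convolution form, where the analytic/anti-analytic structure manifests as geometric separation between the supports of $\mu^{*3}$ (in $\{s>0\}$) and $\widetilde{\mu}^{*2}$ (in $\{s<0\}$). A careful Fubini combined with the Schwartz tails produced in the first stage then justifies the shift and identifies the admissible range of the decay rate $\eta$.
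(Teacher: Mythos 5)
There is a genuine gap at the heart of your second stage, and it is precisely the obstacle you yourself flag. In the Euler--Lagrange integral $\int_{\R}F(x,t)^3\,\overline{F(x,t)}^2e^{-it|y|^p}\d t$ the two factors extend holomorphically to \emph{opposite} half-planes, while the damping factor $e^{-it|y|^p}$ decays only in the lower half-plane; since there is a single integration variable $t$, no contour deformation is available that simultaneously respects the analyticity of $F^3$ and gains the factor $e^{-\eta|y|^p}$. Your proposed rescue --- passing to convolution form and exploiting the separation of the supports of $\mu^{\ast3}$ (in $\{s\geq 3^{1-p}|u|^p\}$) and $\widetilde{\mu}^{\ast2}$ (in $\{s\leq -2^{1-p}|u|^p\}$) --- only tells you that on the support of the integrand one has $v=|y_1|^p+|y_2|^p+|y_3|^p\geq|y|^p$, i.e.\ that $f(y)$ is expressed through values $f(y_i)$ with at least one $|y_i|\gtrsim|y|$. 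To convert this into the bound $|f(y)|\lesssim e^{-\eta|y|^p}$ you need $\mu^{\ast3}(u,v)$ to decay like $e^{-\eta v}$, which is exactly the super-exponential decay of $f$ you are trying to prove: the argument is circular. (Schwartz decay of $f$, even granted, yields only polynomial decay in $v$.) The paper closes this circle differently: it introduces the \emph{bounded} truncated weights $G_{\mu,\eps}$ of \eqref{eq:DefF}, for which $e^{G_{\mu,\eps}}f$ is automatically in $L^2$ for every $\eps>0$, proves the monotonicity $K_{G_{\mu,\eps}}\leq K$ as in \eqref{eq:BoundWeightedK}, and derives from the Euler--Lagrange equation the self-improving inequality \eqref{eq:Bootstrap} of Proposition \ref{prop:KeySmoothnessStep}, \emph{uniformly in} $\eps$; a continuity argument in $\eps$ plus monotone convergence as $\eps\to0$ then yields $e^{\mu_0|\cdot|^p}f\in L^2$. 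Some such a priori regularization is unavoidable, and it is the missing ingredient in your write-up.

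Your first stage is also not substantiated. The claim that the Euler--Lagrange equation forces $f\in L^\infty$ requires an $L^2\times\cdots\times L^2\to L^\infty$ bound for the $5$-fold convolution $g_1\sigma_p\ast\cdots\ast\widetilde{g_5\sigma_p}$ restricted to the curve $s=|y|^p$; neither this paper nor \cite{OSQ18} establishes such a bound (the companion paper treats $n$-fold convolutions of a single measure in the interior of the support, not restrictions of mixed convolutions to the original curve, where degeneracies and the singular weight $|y|^{(p-2)/6}$ for $1<p<2$ must be confronted). The subsequent ``weight-to-derivative bootstrap'' to Schwartz class is likewise only sketched. Note that the paper's argument needs none of this: it works directly with $f\in L^2$ via the multilinear forms $Q$ and $K$ and the bilinear separation estimate \eqref{eq:ImprovedBoundK}. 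Only your final step --- deducing the entire extension of $\widehat f$ from $e^{\mu_0|x|^p}f\in L^2$ with $p>1$ via Paley--Wiener --- matches the paper and is correct as stated.
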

\noindent Note that the exponent $\mu_0$ necessarily depends on the extremizer itself, see the discussion in \cite[p. 964]{CS12b}.
The proof relies on a bootstrapping argument that found similar applications in \cite{CS12b, EHL11, HS12, Sh16b}.

To some extent, our methods are able to handle the case of the planar {\it odd} curves $s=y|y|^{p-1}$, $p>1$.
Define the singular measure
\begin{equation}\label{eq:DefMuMeasure}
\d\mu_p(y,s)=\ddirac{s-y|y|^{p-1}} |y|^{\frac {p-2}6}\d y\d s.
\end{equation}
The associated Fourier extension operator
${\mathcal S}_p(f)=\mathcal{F}(f\mu_p)(-\cdot)$, 
defined in \eqref{eq:odd-fourier-extension-operator} below,
satisfies the estimate $\|{\mathcal S}_p(f)\|_{L^6}\lesssim \|f\|_{L^2}$. 
In sharp convolution form, this can be \mbox{rewritten as}
\begin{equation}\label{eq:OddSharpConvolutionFormGenP}
\|f\mu_p\ast f\mu_p\ast f\mu_p\|_{L^2(\R^2)}\leq {\bf Q}_p^3\|f\|_{L^2(\R)}^3,
\end{equation}
where ${\bf Q}_p$ denotes the optimal constant.
Odd curves are of independent interest, in particular because a new phenomenon emerges: caps centered around points with parallel tangents interact strongly, regardless of separation between the points. 
This mechanism was discovered in \cite{CS12a}, and further explored in \cite{CFOST17, Fo15, FLS16, FS17, Sh16}. 
Some of these works include a symmetrization step which relies on the convolution structure of the underlying inequality. 
In the present case, we also show that the search for extremizers can be further restricted to the class of even functions, but interestingly our symmetrization argument does not depend on the convolution structure.
This may be of independent interest since it applies to other Fourier extension inequalities where some additional symmetry is present, as we indicate in \S \ref{sec:symmetry} below.

The following versions of Theorems \ref{thm:Thm2} and \ref{thm:Thm3} hold for odd curves.

\begin{theorem}\label{thm:Thm5}
Let $p>1$. If
\begin{equation}\label{eq:IneqCriticalValueOp}
{\bf Q}_p^6>\frac{5\pi}{\sqrt{3} p(p-1)},
\end{equation}
then any extremizing sequence of nonnegative, even functions in $L^2(\R)$ for \eqref{eq:OddSharpConvolutionFormGenP} is precompact, after normalization and scaling.
In this case, extremizers for \eqref{eq:OddSharpConvolutionFormGenP} exist.
If instead equality holds in \eqref{eq:IneqCriticalValueOp}  then, given any $y_0\in\R$, there exists an extremizing sequence for \eqref{eq:OddSharpConvolutionFormGenP} which concentrates at the  pair $\{-y_0,y_0\}$.
\end{theorem}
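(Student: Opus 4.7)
The plan is to adapt the proof of Theorem \ref{thm:Thm2} to the odd setting, exploiting the additional $y\mapsto -y$ symmetry peculiar to odd curves. First I invoke the symmetrization argument of \S\ref{sec:symmetry}, which reduces the extremal problem for \eqref{eq:OddSharpConvolutionFormGenP} to the class of nonnegative \emph{even} functions in $L^2(\R)$. Given such an extremizing sequence $\{f_n\}$, I normalize using the $L^2$-preserving scaling $f_n(y)\mapsto\lambda_n^{1/2}f_n(\lambda_n y)$; since modulation breaks evenness, scaling is effectively the only remaining noncompact symmetry. A concentration-compactness dichotomy---built on the Lions-style lemma, bilinear extension estimates, a Br\'ezis--Lieb type identity, and a refinement of the extension inequality over a cap space whose caps come in reflected pairs, exactly as in Theorem \ref{thm:Thm2}---then reduces matters to comparing ${\bf Q}_p^6$ with the explicit concentration rate at a symmetric pair $\{-y_0,y_0\}$, $y_0\geq 0$.

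The core of the argument is the computation of that rate. Let $g$ be even, nonnegative, and tightly concentrated near $\{-y_0,y_0\}$; write $g=g^++g^-$ with $g^\pm$ supported near $\pm y_0$, so that $\|g^\pm\|_{L^2}^2=\tfrac12\|g\|_{L^2}^2$. Let $T:(y,s)\mapsto(-y,-s)$; since the curve $s=y|y|^{p-1}$ is invariant under $T$, one checks that $g^-\mu_p=T_\ast(g^+\mu_p)$, whence $\widehat{g^-\mu_p}=\overline{\widehat{g^+\mu_p}}$. Expanding $((g^++g^-)\mu_p)^{\ast 3}$ binomially yields four pieces with coefficients $1,3,3,1$, supported respectively in pairwise disjoint neighborhoods of $(\pm 3y_0,\pm 3y_0|y_0|^{p-1})$ and $(\pm y_0,\pm y_0|y_0|^{p-1})$; consequently all cross inner products vanish in the concentration limit. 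Plancherel's theorem then identifies
\begin{equation*}
\|(g^+\mu_p)^{\ast 3}\|_{L^2}^2=\int\bigl|\widehat{g^+\mu_p}(\xi)\bigr|^6\d\xi=\|(g^+\mu_p)^{\ast 2}\ast(g^-\mu_p)\|_{L^2}^2,
\end{equation*}
with the analogous identity for the mirrored pair. Using $\|g\|_{L^2}^6=8\|g^+\|_{L^2}^6$ and the fact that the ratio $\|(g^+\mu_p)^{\ast 3}\|_{L^2}^2/\|g^+\|_{L^2}^6$ approaches the single-point limiting value $\frac{2\pi}{\sqrt 3\,p(p-1)}$ computed in the proof of Theorem \ref{thm:Thm2} (the odd and even curves agree to second order near $y_0>0$), we obtain
\begin{equation*}
\frac{\|(g\mu_p)^{\ast 3}\|_{L^2}^2}{\|g\|_{L^2}^6}\sim\frac{2+18}{8}\cdot\frac{2\pi}{\sqrt 3\,p(p-1)}=\frac{5\pi}{\sqrt 3\,p(p-1)},
\end{equation*}
which is exactly the critical value appearing in \eqref{eq:IneqCriticalValueOp}.

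Given this identification, strict inequality in \eqref{eq:IneqCriticalValueOp} rules out the pair-concentration scenario, and the dichotomy delivers a nonnegative even extremizer; the converse is established constructively by concentrating a scaled Gaussian together with its reflection around $\pm y_0$. The main technical obstacle I anticipate lies in the symmetric cap decomposition underlying the dichotomy: caps must be grouped into reflected pairs, and the bilinear extension estimate must not lose in the regime where a cap centered near $y_0$ interacts with its reflection near $-y_0$---this is precisely the parallel-tangent mechanism responsible for the enhanced factor $5$ in place of $2$. Proving that no other loss-of-compactness mechanism survives the even-symmetric reduction, and quantitatively controlling the vanishing of all off-diagonal contributions under tight concentration, is where the bulk of the technical work will lie.
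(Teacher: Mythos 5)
Your proposal follows essentially the same route as the paper: symmetrize to nonnegative even functions, run the pair-adapted concentration-compactness dichotomy, and compute the concentration threshold by splitting $g=g^++g^-$, discarding the cross terms by support disjointness, and using $\widehat{g^-\mu_p}=\overline{\widehat{g^+\mu_p}}$ to identify $\norma{(g^+\mu_p)\ast(g^+\mu_p)\ast(g^-\mu_p)}_{L^2}=\norma{(g^+\mu_p)^{\ast 3}}_{L^2}$, which yields the factor $\tfrac{2+18}{8}=\tfrac52$ and hence the threshold $\tfrac{5\pi}{\sqrt{3}\,p(p-1)}$ — exactly the computation carried out in \S\ref{sec:Odd}. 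The only cosmetic differences are that the paper realizes the concentrating sequence via $e^{-n(\ab{y}^p-1-p(y-1))}\ab{y}^{(p-2)/6}\one_{[0,\infty)}$ plus its reflection rather than a Gaussian, and that on $y>0$ one has $\mu_p=\sigma_p$ exactly (not merely to second order), which is what licenses the reduction to Lemma \ref{lem:p-upper-bound-concentration}.
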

\noindent The case $p=3$ of Theorem \ref{thm:Thm5} coincides with a special case of  \cite[Theorem 1]{FS17}, which was obtained by different methods.
\begin{theorem}\label{thm:Thm5.5}
If $p\in(1,2)$, then the strict inequality \eqref{eq:IneqCriticalValueOp}
holds and, in particular, there exists an extremizer for \eqref{eq:OddSharpConvolutionFormGenP}.
\end{theorem}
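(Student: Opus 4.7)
The strategy is to produce, for every $p\in(1,2)$, a nonnegative even trial function $f_p\in L^2(\R)$ whose convolution ratio strictly exceeds the threshold, i.e.\
\[
\frac{\|f_p\mu_p\ast f_p\mu_p\ast f_p\mu_p\|_{L^2(\R^2)}^2}{\|f_p\|_{L^2(\R)}^6}\;>\;\frac{5\pi}{\sqrt{3}\,p(p-1)}.
\]
Since this ratio lower-bounds $\mathbf{Q}_p^6$, the strict inequality \eqref{eq:IneqCriticalValueOp} follows, and the existence of an extremizer is then immediate from Theorem~\ref{thm:Thm5}. The reduction to even trial functions is provided by the symmetrization argument of \S\ref{sec:symmetry}.

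The candidate to try first is $f_p=\mathbf{1}_{[-1,1]}$, whose dilation orbit parametrizes the scale-invariance of \eqref{eq:OddSharpConvolutionFormGenP}. I would decompose $\mu_p=\mu_p^++\mu_p^-$ along the two monotone branches of $s=y|y|^{p-1}$, and expand $f_p\mu_p\ast f_p\mu_p\ast f_p\mu_p$ into the eight pieces indexed by sign patterns in $\{+,-\}^3$. By evenness of $f_p$ and the reflection symmetry $(y,s)\mapsto(-y,-s)$ of $\mu_p$, these collapse into two types: \emph{monochromatic} contributions $A=f_p\mu_p^+\ast f_p\mu_p^+\ast f_p\mu_p^+$ (and its mirror) supported near the ``triple-sum'' locus, and \emph{polychromatic} contributions $B=f_p\mu_p^+\ast f_p\mu_p^+\ast f_p\mu_p^-$ (together with three sign-permutations and their mirror) supported near the ``single-sum'' locus. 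Since these four clusters occupy disjoint regions of $(\xi,\tau)$-space, the $L^2$ norms add cleanly:
\[
\|f_p\mu_p\ast f_p\mu_p\ast f_p\mu_p\|_{L^2}^2\;=\;2\|A\|_{L^2}^2+18\|B\|_{L^2}^2.
\]
The monochromatic norm $\|A\|_{L^2}^2$ matches an even-curve three-fold convolution restricted to the positive half-line and is handled by the same Legendre-polynomial lower bound used in Theorem~\ref{thm:Thm3}. The polychromatic norm $\|B\|_{L^2}^2$ is the genuinely new contribution and is computed directly via the change of variables $(y_1,y_2)\mapsto(\xi,\tau)$; its Jacobian is proportional to $p(|y_1|^{p-1}-|y_3|^{p-1})$, and integrating against the indicator weight yields the factor $1/[p(p-1)]$ matching the right-hand side of \eqref{eq:IneqCriticalValueOp}.

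The main obstacle is verifying the strict inequality \emph{uniformly} for $p\in(1,2)$. Both sides of \eqref{eq:IneqCriticalValueOp} behave like $(p-1)^{-1}$ as $p\downarrow 1$, so the constant-order correction must be tracked with care; near $p=2$ the Jacobian is regular, but the separation of supports of the monochromatic and polychromatic clusters becomes marginal, calling for sharper bookkeeping. If the crude trial function $\mathbf{1}_{[-1,1]}$ falls short in some narrow sub-range, I would enlarge the test class to finite superpositions of Legendre polynomials (paralleling \S\ref{sec:other-powers-v2}) and optimize over the coefficients for the worst-case $p$, which should provide enough slack to close the inequality.
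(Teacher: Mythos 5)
There is a genuine gap: the entire proof is deferred to the unverified quantitative claim that the ratio $\Phi$ evaluated at $\mathbf{1}_{[-1,1]}$ (or at some finite Legendre superposition) exceeds $\frac{5\pi}{\sqrt{3}\,p(p-1)}$ for \emph{every} $p\in(1,2)$, and the evidence in the paper points the other way. For the even curve the analogous fixed-trial-function strategy is exactly what the authors show \emph{fails} when $1<p<2$: the truncated lower bound \eqref{eq:truncated-lower-bound} falls short of the threshold for every $N$ in that range, which is why \S\ref{sec:Existence} abandons fixed trial functions there and switches to the \emph{concentrating} family \eqref{eq:TrialFunction1p2}. The mechanism that actually closes the case $1<p<2$ is perturbative, not numerical: one takes even trial functions concentrating at the pair $\{-1,1\}$, uses identity \eqref{eq:even-odd-concentration-relation} to reduce the odd-curve ratio to $\tfrac52$ times the even-curve ratio for a sequence concentrating at $y_0=1$, and then invokes Lemma \ref{lem:perturbative}: the ratio tends to $\frac{2\pi}{\sqrt{3}p(p-1)}$ with first-order correction $\frac{\pi(2-p)(2p-1)}{9\sqrt{3}p^2(p-1)^2}$, which is strictly positive precisely because $1<p<2$, so the threshold is exceeded for large $n$. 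Your plan contains no substitute for this sign computation, and your own hedge ("if the crude trial function falls short \dots I would enlarge the test class") concedes that the decisive inequality has not been established; optimizing Legendre coefficients is also what \S\ref{sec:other-powers-v2} does for the even curve without ever reaching the threshold below $p=2$.

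A secondary but real error: the claimed orthogonal splitting $\|f\mu_p^{\ast3}\|_{L^2}^2=2\|A\|_{L^2}^2+18\|B\|_{L^2}^2$ is false for $f=\mathbf{1}_{[-1,1]}$, because the monochromatic and polychromatic clusters are \emph{not} disjointly supported: with $g=f\one_{[0,\infty)}$, the piece $g\mu_p\ast g\mu_p\ast g\mu_p$ lives over $\xi\in[0,3]$ while $g\mu_p\ast g\mu_p\ast \tilde g\mu_p$ lives over $\xi\in[-1,2]$, and the overlap produces the cross term $30\langle (g\mu_p)^{\ast4},(g\mu_p)^{\ast2}\rangle$ appearing in the paper's expansion, which vanishes only in the concentration limit. (Since that cross term is nonnegative your expression survives as a lower bound, and by Plancherel $\|B\|_{L^2}=\|A\|_{L^2}$ anyway, but as stated the identity, and hence any exact evaluation built on it, is wrong.) To repair the argument you should replace the fixed trial function by the even extension of $g_n(y)=e^{-\frac n2(|y|^p-py)}|y|^{-\frac{2-p}{6}}$ and run the concentration computation of \eqref{eq:even-odd-concentration-relation} together with Lemmata \ref{lem:p-upper-bound-concentration} and \ref{lem:perturbative}.
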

\noindent
We believe that extremizers do not exist if $p\geq 2$, see Conjecture \ref{cjct:nonexistence} below.\\

{\bf Overview.}
The paper is organized as follows. \S \ref{sec:Prelims} is devoted to the technical preliminaries for the dichotomy statement concerning the existence of extremizers: bilinear estimates and cap bounds. We then prove Theorem \ref{thm:Thm2}  in \S \ref{sec:Dichotomy}. Existence of extremizers is the subject of  \S \ref{sec:Existence}, where we establish Theorem \ref{thm:Thm3}. Theorem \ref{thm:Thm4} addresses the regularity of extremizers and is established in \S \ref{sec:Smoothness}. Odd curves are treated in \S \ref{sec:Odd}, where Theorems \ref{thm:Thm5} and \ref{thm:Thm5.5} are proved. In the Appendix, we establish useful variants of Lions' concentration-compactness lemma  (Proposition \ref{prop:metric-concentration-compactness-lemma}) and of a corollary of the Br\'ezis--Lieb lemma (Proposition \ref{prop:new-fvv}).\\

{\bf Notation.}
If $x,y$ are real numbers, we write $x=O(y)$ or $x\lesssim y$ if there exists a finite absolute constant $C$ such that $|x|\leq C|y|$. 
If we want to make explicit the dependence of the constant $C$  on some parameter $\alpha$, we  write $x=O_\alpha(y)$ or $x\lesssim_\alpha y$. 
We write $x\gtrsim y$ if $y\lesssim x$, and $x\simeq y$ if $x\lesssim y$ and $x\gtrsim y$.
Finally, the indicator function of a set $E\subset\R^d$ will be denoted by $\one_E$, and the complement of $E$ will at times be denoted by $E^\complement$.

\section{Bilinear estimates and cap refinements}\label{sec:Prelims} 
In this section, we prove the bilinear extension estimates and cap refinements which will be needed in the next section.
Bilinear extension estimates are usually deep \cite{Ta03, Wo01}, but in the one-dimensional case one may rely on the  classical Hausdorff--Young inequality. 
Throughout this section, we shall consider the dyadic regions 
$$I_k:=[2^{k},2^{k+1}), \text{ and } I_k^\bullet:=(-2^{k+1},-2^k]\cup [2^k,2^{k+1}),\;\;\;(k\in\Z).$$ 
\subsection{Bilinear estimates}
Recall the definitions \eqref{eq:DefSigmaMeasure} and \eqref{eq:DefFSigmaHat} of the measure {$\sigma_p$} and the Fourier extension operator $\mathcal{E}_p$, respectively.
Our first result quantifies the principle that distant caps interact weakly.
\begin{proposition}\label{prop:bilinear-p-power}
	Let $p>1$ and $k,k'\in\Z$.
	Then
	\begin{equation}\label{bilinear-exponential-decay}
	\norma{\mathcal{E}_p(f)\mathcal{E}_p(g)}_{L^3(\R^2)}\lesssim_p 
	2^{-\ab{k-k'}\frac{p-1}6}\norma{f}_{L^2(\R)}\norma{g}_{L^2(\R)},
		\end{equation}
	for every $f,g\in L^2(\R)$ satisfying 
	$\supp f\subseteq I_k^\bullet$ and 
	$\supp g\subseteq I_{k'}^\bullet$. 
\end{proposition}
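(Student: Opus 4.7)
The plan is to reduce the claim to an $L^{3/2}$ bound on the two-fold convolution $f\sigma_p \ast g\sigma_p$ via Hausdorff--Young, and then extract the quantitative gain from the dyadic separation of supports through an explicit change of variables. The starting point is the identity $\mathcal{E}_p(f)\mathcal{E}_p(g) = \widehat{f\sigma_p \ast g\sigma_p}(-\,\cdot\,)$ coming from the convolution theorem, after which the Hausdorff--Young inequality on $\R^2$ reduces the claim to
\[
\|f\sigma_p \ast g\sigma_p\|_{L^{3/2}(\R^2)} \lesssim_p 2^{-|k-k'|(p-1)/6}\|f\|_{L^2}\|g\|_{L^2}.
\]

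To compute the convolution density, I would apply the change of variables $(y_1,y_2)\mapsto (x,t):=(y_1+y_2,\,|y_1|^p+|y_2|^p)$, whose Jacobian is
\[
J(y_1,y_2) = p\,\bigl||y_2|^{p-1}\sign(y_2) - |y_1|^{p-1}\sign(y_1)\bigr|.
\]
After splitting $I_k^\bullet\times I_{k'}^\bullet$ into its four sign sectors, and restricting (say) to $|y_1|<|y_2|$ to ensure injectivity in the same-sign sectors, a routine push-forward computation yields
\[
\|f\sigma_p \ast g\sigma_p\|_{L^{3/2}}^{3/2} \lesssim \int_{I_k^\bullet\times I_{k'}^\bullet} |f(y_1)g(y_2)|^{3/2}\,\frac{|y_1|^{(p-2)/4}|y_2|^{(p-2)/4}}{J(y_1,y_2)^{1/2}}\,\d y_1\,\d y_2.
\]

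The geometric heart of the argument is the lower bound $J(y_1,y_2) \gtrsim_p 2^{(p-1)\max(k,k')}$, which I would establish case by case. In the mixed-sign sectors this is immediate, since there $J = p(|y_1|^{p-1}+|y_2|^{p-1})$; in the same-sign sectors, where $J = p\bigl||y_2|^{p-1}-|y_1|^{p-1}\bigr|$, the dyadic separation $|k-k'|\geq 2$ forces the ratio $|y_1|/|y_2|$ to lie in $(0,1/2]$, so no cancellation can occur. Inserting this bound together with $|y_i|\simeq 2^{k_i}$, and applying H\"older's inequality $\int_{I_{k_i}^\bullet}|h|^{3/2}\,\d y\leq \|h\|_{L^2}^{3/2}|I_{k_i}^\bullet|^{1/4}$ in each variable separately, the exponents of $2^k$ and $2^{k'}$ collect to $(p-1)/4$ and $-(p-1)/4$ respectively, producing an overall factor $2^{-(p-1)|k-k'|/4}$ inside the $3/2$-th power. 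Taking roots gives the claimed decay $2^{-(p-1)|k-k'|/6}$.

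The residual case $|k-k'|\leq 1$, where the sought-after decay factor is trivial, follows directly from H\"older's inequality $\|\mathcal{E}_p(f)\mathcal{E}_p(g)\|_{L^3}\leq \|\mathcal{E}_p(f)\|_{L^6}\|\mathcal{E}_p(g)\|_{L^6}$ combined with the Strichartz estimate \eqref{eq:SharpExtensionFormGenP}. The main technical obstacle will be organizing the change of variables uniformly across the four sign sectors and rigorously verifying the Jacobian lower bound in the same-sign case; once the dyadic scales are sufficiently separated, no degeneracy can occur, and what remains is bookkeeping of exponents.
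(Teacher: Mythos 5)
Your proposal is correct and follows essentially the same route as the paper: Hausdorff--Young applied to $\mathcal{E}_p(f)\mathcal{E}_p(g)=\mathcal{F}(f\sigma_p\ast g\sigma_p)(-\cdot)$, the change of variables $(y_1,y_2)\mapsto(y_1+y_2,|y_1|^p+|y_2|^p)$, the Jacobian lower bound $\gtrsim_p 2^{(p-1)\max(k,k')}$ forced by the dyadic separation $|k-k'|\ge 2$, and H\"older on each dyadic block, with the trivial $L^6\times L^6$ bound covering $|k-k'|\le 1$. The paper handles the sign sectors in one stroke via the inequality $|J^{-1}(y,y')|\ge p\,\bigl||y|^{p-1}-|y'|^{p-1}\bigr|$ (equality iff $yy'\ge0$) rather than splitting into four sectors, but this is purely cosmetic.
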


\begin{proof}
Setting $\psi=\ab{\cdot}^p$ and $w=\ab{\cdot}^{\frac{p-2}3}$, we have that
\[ \big(\mathcal{E}_p(f)\mathcal{E}_p(g)\big)(x,t)=\int_{\R^2} 
e^{ix(y+y')}e^{it(\psi(y)+\psi(y'))}f(y)g(y')w(y)^{\frac12}w(y')^{\frac12}\d y\d y'. \]
Change variables $(y,y')\mapsto (u,v)=(y+y',\psi(y)+\psi(y'))$. 
Except for null sets, this is a $2$--to--$1$ map from $\R^2$ onto the region $\{(u,v)\colon v\geq 2\psi(u/2)\}$.
Its Jacobian is given by
\begin{equation}\label{eq:jacobian}
\begin{split}
J^{-1}(y,y')=\frac{\partial(u,v)}{\partial(y,y')}&=
\det\begin{pmatrix}
1&\psi'(y)\\
1&\psi'(y')
\end{pmatrix}=\psi'(y')-\psi'(y)\\
&=p(y'\ab{y'}^{p-2}-y\ab{y}^{p-2}),
\end{split}
\end{equation}
and satisfies
$\ab{J^{-1}(y,y')}\geq p \ab{\ab{y}^{p-1}-\ab{y'}^{p-1}}$,
with equality if and only if $yy'\geq 0$. 
Thus 
\begin{equation}\label{eq:PreHausdorffYoung}
\big(\mathcal{E}_p(f)\mathcal{E}_p(g)\big)(x,t)
=2\int e^{ixu}e^{itv}f(y)g(y')w(y)^{\frac12}w(y')^{\frac12}J(u,v)\d u\d v, 
\end{equation}
where the integral is taken over the region $\{(u,v)\colon v\geq 2\psi(u/2)\}$.
Note that this implies
\begin{equation}\label{eq:altFormulaConv}
(f\sigma_p\ast g\sigma_p)(u,v)=2f(y)g(y')w(y)^{\frac12}w(y')^{\frac12}J(u,v),
\end{equation}
for every $(u,v)$ satisfying $v> 2\psi(u/2)$,
where $(y, y')$ is related to $(u,v)$ via the change of variables described above. 

By symmetry, we can and will restrict attention to {$\ab{y'}\leq \ab{y}$}. 
Taking the $L^3$-norm of \eqref{eq:PreHausdorffYoung}, invoking the Hausdorff--Young inequality, and then changing variables back to $(y,y')$,
\begin{align*}
\norma{\mathcal{E}_p(f)\mathcal{E}_p(g)}_{L^3(\R^2)}
&\lesssim\norma{f(y)g(y')w(y)^{\frac12}w(y')^{\frac12}J(u,v) }_{L_{u,v}^{3/2}(\R^{1+1})}\\
&=\norma{f(y)g(y')w(y)^{\frac12}w(y')^{\frac12}\ab{J(y,y')}^{\frac13} }_{L_{y,y'}^{3/2}(\R^{1+1})}.
\end{align*}
If $2^{k}\leq\ab{y}< 2^{k+1}$, $2^{k'}\leq\ab{y'}< 2^{k'+1}$ and $k\geq k'+2$, then 
\begin{equation}\label{dyadic-bound-kernel}
\frac{\ab{yy'}^{\frac{p-2}4}}{p^{\frac12}\ab{\ab{y}^{p-1}-\ab{y'}^{p-1}}^{\frac12}}
\lesssim\frac{2^{(k+k')\frac{p-2}4}}{2^{k\frac{p-1}2}(1-2^{-(k-k'-1)(p-1)})^{\frac12}}\lesssim 
2^{(k'-k)\frac p4-\frac{k'}2}.
\end{equation}
It follows that
\begin{align}
\norma{\mathcal{E}_p(f)\mathcal{E}_p(g)}_{L^3}^{3/2}
&\lesssim\int_{\R^2} |f(y)g(y')|^{\frac32}w(y)^{\frac34}w(y')^{\frac34}\ab{J(y,y')}^{\frac12}\d y\d y'\label{eq:AfterHY}\\
&\leq\int_{\R^2} 
|f(y)g(y')|^{\frac32}
\frac{\ab{yy'}^{\frac{p-2}4}}{p^{\frac12}\ab{\ab{y}^{p-1}-\ab{y'}^{p-1}}^{\frac12}}\d 
y\d y'\notag\\
&\lesssim 2^{(k'-k)\frac p4-\frac{k'}2}2^{\frac{k}4}2^{\frac {k'}4}
\norma{f}_{L^2}^{\frac32}\norma{g}_{L^2}^{\frac32}\notag\\
&=2^{-\ab{k-k'}\frac{p-1}4}\norma{f}_{L^2}^{\frac32}\norma{g}_{L^2}^{\frac32}.\notag
\end{align}
If $k\in\{k',k'+1\}$, then we can simply use the estimate 
$\norma{\mathcal{E}_p(f)\mathcal{E}_p(g)}_{L^3}\lesssim\norma{f}_{L^2}\norma{g}_{L^2}$.
\end{proof}
\begin{corollary}\label{cor:bilinear-separated}
	Let $p>1$ and $k,k'\in\Z$ be such that $k'\leq k$.
		Then
	\begin{equation}\label{eq:bilinear-exponential-decay}
	\norma{\mathcal{E}_p(f)\mathcal{E}_p(g)}_{L^3(\R^2)}\lesssim_p 
	2^{-\ab{k-k'}\frac{p-1}6}\norma{f}_{L^2(\R)}\norma{g}_{L^2(\R)},
	\end{equation}
	for every $f,g\in L^2(\R)$ satisfying $\supp 
	f\subseteq \{\ab{y}\geq 2^k\}$ and $\supp g\subseteq \{\ab{y'}\leq 2^{k'} \}$. 
\end{corollary}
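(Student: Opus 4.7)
The plan is to deduce Corollary \ref{cor:bilinear-separated} from Proposition \ref{prop:bilinear-p-power} via a dyadic decomposition in each support, followed by a Schur-type summation that preserves the exponential rate. Since the dyadic pieces $\{I_j^\bullet\}_{j\in\Z}$ tile $\R\setminus\{0\}$, the hypothesis $\supp f \subseteq \{|y|\geq 2^k\}$ allows us to write $f = \sum_{j\geq k} f_j$, with $f_j := f\,\one_{I_j^\bullet}$, and similarly $g = \sum_{j'\leq k'-1} g_{j'}$, with $g_{j'} := g\,\one_{I_{j'}^\bullet}$. Both decompositions are orthogonal in $L^2$, so $\sum_{j\geq k}\|f_j\|_{L^2}^2 = \|f\|_{L^2}^2$ and $\sum_{j'\leq k'-1}\|g_{j'}\|_{L^2}^2 = \|g\|_{L^2}^2$.

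By linearity of $\mathcal{E}_p$, the triangle inequality in $L^3(\R^2)$, and the single-annulus bilinear bound \eqref{bilinear-exponential-decay} applied to each pair $(f_j,g_{j'})$, one obtains
\[
\|\mathcal{E}_p(f)\mathcal{E}_p(g)\|_{L^3(\R^2)} \lesssim_p \sum_{j\geq k}\sum_{j'\leq k'-1} 2^{-(j-j')\frac{p-1}{6}}\|f_j\|_{L^2}\|g_{j'}\|_{L^2},
\]
where we used that $j - j' \geq k - (k'-1) \geq 1 > 0$ to drop the absolute value. Writing $\alpha := (p-1)/6 > 0$, the double sum factors as a product
\[
\Bigl(\sum_{j\geq k} 2^{-\alpha j}\|f_j\|_{L^2}\Bigr)\Bigl(\sum_{j'\leq k'-1} 2^{\alpha j'}\|g_{j'}\|_{L^2}\Bigr).
\]

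Each factor is controlled by a Cauchy--Schwarz against the convergent geometric series $\sum_{j\geq k} 2^{-2\alpha j} \lesssim_p 2^{-2\alpha k}$ and $\sum_{j'\leq k'-1} 2^{2\alpha j'} \lesssim_p 2^{2\alpha k'}$, yielding
\[
\sum_{j\geq k} 2^{-\alpha j}\|f_j\|_{L^2} \lesssim_p 2^{-\alpha k}\|f\|_{L^2}, \qquad \sum_{j'\leq k'-1} 2^{\alpha j'}\|g_{j'}\|_{L^2} \lesssim_p 2^{\alpha k'}\|g\|_{L^2}.
\]
Multiplying these two estimates produces the factor $2^{-\alpha(k-k')} = 2^{-|k-k'|(p-1)/6}$, establishing \eqref{eq:bilinear-exponential-decay}. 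The main (minor) obstacle is verifying that the summation procedure does not degrade the exponential rate obtained in Proposition \ref{prop:bilinear-p-power}; convergence of both geometric series, and hence preservation of the rate, hinges precisely on $p > 1$, the same hypothesis already present in the proposition. The endpoint case $k = k'$ reduces to the trivial bilinear bound, since then the right-hand side of \eqref{eq:bilinear-exponential-decay} simplifies to $\|f\|_{L^2}\|g\|_{L^2}$ up to a constant.
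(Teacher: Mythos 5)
Your proof is correct and follows essentially the same route as the paper: decompose $f$ and $g$ into dyadic pieces, apply the single-annulus bilinear estimate of Proposition \ref{prop:bilinear-p-power} to each pair, and sum using Cauchy--Schwarz together with $L^2$-orthogonality and the convergence of the geometric series (the paper applies Cauchy--Schwarz jointly to the double sum, whereas you factor the separable kernel $2^{-\alpha(j-j')}$ and apply it in each variable separately, which is an equivalent bookkeeping of the same estimate).
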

\begin{proof}
	Write $f=\sum_{j\geq k}f_j$ and $g=\sum_{j'< k'}g_{j'}$, 
	where $f_j:=f\one_{I_j^\bullet}$ and $g_{j'}:=g\one_{I_{j'}^\bullet}$. Then:
	\begin{align*}
	\norma{\mathcal{E}_p(f)\mathcal{E}_p(g)}_{L^3(\R^2)}
         &\leq \sum_{j\geq k,\,j'< k'}\norma{\mathcal{E}_p(f_j)\mathcal{E}_p(g_{j'})}_{L^3}
         \lesssim \sum_{j\geq k,\,j'< k'} 2^{-\ab{j-j'}\frac{p-1}6}\norma{f_j}_{L^2}\norma{g_{j'}}_{L^2}\\
	&\leq \biggl(\sum_{j\geq k,\,j'< k'} 2^{-\ab{j-j'}\frac{p-1}3}\biggr)^{\frac12}
	\biggl(\sum_{j\geq k,\,j'< k'} \norma{f_j}_{L^2}^2\norma{g_{j'}}_{L^2}^2\biggr)^{\frac12}\\
	&\simeq\biggl(\sum_{j\geq k} 2^{-\ab{j-k'}\frac{p-1}3}\biggr)^{\frac12}
	\norma{f}_{L^2}\norma{g}_{L^2}
	\simeq 2^{-\ab{k-k'}\frac{p-1}6}\norma{f}_{L^2}\norma{g}_{L^2},
	\end{align*}
	where we used the triangle inequality, Proposition \ref{prop:bilinear-p-power}, the Cauchy--Schwarz inequality, $L^2$-orthogonality, and the fact that a geometric series is comparable to its largest term.
\end{proof}
When studying concentration at points different from the origin, it will be useful to consider dyadic decompositions of the real line with arbitrary centers. 
By reflexion and scaling, it suffices to consider decompositions centered at $1$. 
Define the dyadic regions
$$\mathcal{I}_k:=\{2^k\leq y-1< 2^{k+1}\}, \text{ and } \mathcal{I}_k^\bullet:=\{2^k\leq \ab{y-1}< 2^{k+1}\},\;\;\;(k\in\Z)$$ 
so that $\I_k=1+I_k$ and $\I_k^\bullet=1+I_k^\bullet$. 
The following analogue of Proposition \ref{prop:bilinear-p-power} holds.

\begin{proposition}\label{prop:unc-bilinear-p-power}
	Let $p>1$ and $k,k'\in\Z$.
	Let $\beta=\min\{\frac{1}{6},\frac{p-1}{6}\}$.
	Then
	\begin{equation}\label{unc-bilinear-exponential-decay}
	\norma{\mathcal{E}_p(f)\mathcal{E}_p(g)}_{L^3(\R^2)}\lesssim_p 
	2^{-\beta\ab{k-k'}}\norma{f}_{L^2(\R)}\norma{g}_{L^2(\R)},
	\end{equation}
	for every $f,g\in L^2(\R)$ satisfying
	 $\supp f\subseteq \I_k^\bullet$ and 
	 $\supp g\subseteq \I_{k'}^\bullet$. 
\end{proposition}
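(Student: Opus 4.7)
The plan is to mimic the proof of Proposition~\ref{prop:bilinear-p-power} essentially verbatim, adapting only the pointwise kernel bound to the new dyadic regions centered at $y=1$.  The change of variables $(y,y')\mapsto(u,v)=(y+y',\ab{y}^p+\ab{y'}^p)$, the Jacobian computation in \eqref{eq:jacobian}, and the Hausdorff--Young reduction carry over without change, delivering an estimate of the form
\[
\norma{\mathcal{E}_p(f)\mathcal{E}_p(g)}_{L^3}^{3/2}\lesssim\int_{\I_k^\bullet\times\I_{k'}^\bullet}\ab{f(y)g(y')}^{3/2}K(y,y')\d y\d y',
\]
where $K(y,y'):=(\ab{y}\ab{y'})^{(p-2)/4}p^{-1/2}\bigl\lvert\ab{y}^{p-1}-\ab{y'}^{p-1}\bigr\rvert^{-1/2}$.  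By symmetry I assume $\ab{y'}\leq\ab{y}$, and the case $\ab{k-k'}\leq 1$ follows trivially from the linear Hausdorff--Young bound.  A final Cauchy--Schwarz combined with $\ab{\I_k^\bullet}\simeq 2^k$ and $\ab{\I_{k'}^\bullet}\simeq 2^{k'}$ then reduces matters to showing a pointwise bound of the form $\sup K\cdot 2^{(k+k')/4}\lesssim 2^{-3\beta(k-k')/2}$ on the support, under the standing assumption $k\geq k'+2$.

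I intend to establish this kernel bound through a three-fold case analysis.  \textbf{Case 1} ($k,k'\geq 0$): each $\I_j^\bullet$ is covered by a bounded number of the centered intervals $I_\ell^\bullet$ with $\ell$ at bounded distance from $\max(j,0)$, so Proposition~\ref{prop:bilinear-p-power} delivers decay at the rate $(p-1)/6\geq\beta$.  \textbf{Case 2} ($k,k'<0$): here $y,y'$ both sit in a compact subset of $(0,\infty)$ very close to $1$, so $(\ab{y}\ab{y'})^{(p-2)/4}\simeq 1$ and the mean value theorem yields $\bigl\lvert\ab{y}^{p-1}-\ab{y'}^{p-1}\bigr\rvert\simeq (p-1)\ab{y-y'}\simeq (p-1)\,2^k$.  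Thus $K\simeq 2^{-k/2}$, and after multiplication by $2^{(k+k')/4}$ the bound produces decay at the rate $1/6\geq\beta$.  \textbf{Case 3} ($k\geq 0>k'$): on the bulk where $\ab{y'}\simeq 1$ one has $\ab{y}\simeq 2^{\max(k,0)}$, whence $\bigl\lvert\ab{y}^{p-1}-\ab{y'}^{p-1}\bigr\rvert\simeq 2^{k(p-1)}$ (for $k\geq 1$), and a direct calculation shows that the regimes $p\geq 2$ and $1<p<2$ precisely saturate the two branches of the minimum defining $\beta$.

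The main technical obstacle will be the sub-region where one of $\ab{y},\ab{y'}$ is arbitrarily close to $0$, which occurs only when the corresponding dyadic index lies in $\{-1,0\}$: for $1<p<2$ the weight $(\ab{y}\ab{y'})^{(p-2)/4}$ is then unbounded and the naive uniform estimate on $K$ breaks down.  I would handle this by further dyadically decomposing the problematic variable along $\{\ab{\cdot}\simeq 2^j\}_{j\leq-1}$.  Each such piece is a standard centered-at-zero dyadic interval on which Proposition~\ref{prop:bilinear-p-power} applies with the revised index $j$ in place of $k$ (respectively $k'$); the resulting bounds sum geometrically via the triangle inequality, Cauchy--Schwarz, and $L^2$-orthogonality, exactly as in Corollary~\ref{cor:bilinear-separated}, to produce the claimed rate $2^{-\beta\ab{k-k'}}$.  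Collecting all cases and taking the $2/3$-power of the $L^{3/2}$ bound completes the proof.
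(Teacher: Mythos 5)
Your overall architecture (the Hausdorff--Young reduction to a kernel bound, the case analysis on the signs of $k,k'$, and the special treatment of the indices in $\{-1,0\}$ where $\I_k^\bullet$ reaches the origin) matches the paper's, and your Case~1 and your Case~2 restricted to $k,k'\leq -2$ are correct --- the latter is exactly estimate \eqref{close-to-1}. The gap is in your repair of the ``main technical obstacle''. Take $k=-1$ and $k'\leq -3$ (or symmetrically): $f$ lives on $(0,\tfrac12]\cup[\tfrac32,2)$ while $g$ lives within distance $2^{k'+1}\leq\tfrac14$ of $1$. After decomposing $f\one_{(0,1/2]}=\sum_{j\geq1}f_j$ with $\supp f_j\subseteq I_{-(j+1)}^\bullet$, the companion function $g$ sits inside the centered-at-zero shells $I_{-1}^\bullet\cup I_0^\bullet$, so the black-box application of Proposition \ref{prop:bilinear-p-power} yields only $2^{-j(p-1)/6}\norma{f_j}_{L^2}\norma{g}_{L^2}$ for each pair; summing in $j$ gives $O(\norma{f}_{L^2}\norma{g}_{L^2})$ with \emph{no} decay in $\ab{k-k'}=\ab{k'}-1$. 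That parameter is invisible to the centered-at-zero decomposition, and the geometric decay in $j$ is entirely consumed in making the sum converge. The same loss occurs for the piece $f\one_{[3/2,2)}$ and for the configuration $k=0$, $k'\leq -3$.

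The missing ingredient is that in these configurations the decay $2^{-\ab{k-k'}/6}$ must come from the \emph{length} of the support of the function concentrated near $1$: one must retain the factor $\ab{\I_{k'}^\bullet}^{1/4}\simeq 2^{k'/4}$ produced by Cauchy--Schwarz applied directly to the kernel integral. Concretely, keep the pointwise bound $K\lesssim \ab{y}^{-p/4}\ab{y'}^{(p-2)/4}$ from \eqref{eq:separation-bound-kernel} on each near-origin piece (or $K\lesssim 1$ for the piece away from the origin), estimate $\int_{\I_{k'}^\bullet}\ab{g}^{3/2}\lesssim 2^{k'/4}\norma{g}_{L^2}^{3/2}$ and $\int_{I_{-(j+1)}^\bullet}\ab{f_j}^{3/2}\lesssim 2^{-j/4}\norma{f_j}_{L^2}^{3/2}$, and only then sum in $j$; this produces $2^{k'/6}\simeq 2^{-\ab{k-k'}/6}$, which is the computation carried out in the paper's proof. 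Invoking Proposition \ref{prop:bilinear-p-power} as a black box replaces $2^{k'/4}$ by $\ab{I_{\ell}^\bullet}^{1/4}\simeq 1$ for the ambient shell, which is where your argument loses the claimed rate. A secondary point: your kernel uses $\ab{\ab{y}^{p-1}-\ab{y'}^{p-1}}^{-1/2}$, which blows up on the anti-diagonal $y'=-y$ (reachable when $k=0$); for $yy'<0$ the Jacobian is in fact $p(\ab{y}^{p-1}+\ab{y'}^{p-1})$, cf. \eqref{eq:JacobianFactorLook}, and this sharper form must be used there.
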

Before embarking on the proof, let us take a closer look at the factor $\ab{yy'}^{\frac{p-2}4} \ab{J(y,y')}^{\frac12}$  that appears after applying the Hausdorff--Young inequality in \eqref{eq:AfterHY}. We have already seen that
	\begin{equation}\label{eq:JacobianFactorLook}
	\ab{J^{-1}(y,y')}=p\abs{y\ab{y}^{p-2}-y'\ab{y'}^{p-2}}.
	\end{equation}	
	In \eqref{dyadic-bound-kernel} we observed that, if $y,y'$ are {\it separated} (say, $\ab{y'}\leq \tfrac12 \ab{y}$),
	then 
	\begin{equation}\label{eq:separation-bound-kernel} 
	\frac{\ab{yy'}^{\frac{p-2}4}}{\abs{y\ab{y}^{p-2}-y'\ab{y'}^{p-2}}^{\frac12}}
	\lesssim\frac{\ab{yy'}^{\frac{p-2}4}}{\ab{y}^{\frac{p-1}2}}=\ab{y}^{-\frac p4}\ab{y'}^{\frac{p-2}4}.
	\end{equation}
 In order to obtain a useful bound in the case when both $y,y'$ are close to $1$, invoke the Mean Value Theorem and write
	\[ \ab{y}^{p-1}-\ab{y'}^{p-1}=(p-1)s^{p-2}(\ab{y}-\ab{y'}), \]
	for some $s\in[|y'|,|y|]$.
	Then, for $0\leq y'\leq y$, we have that
	\[ \abs{y\ab{y}^{p-2}-y'\ab{y'}^{p-2}}=\ab{y^{p-1}-y'^{p-1}}\gtrsim\begin{cases}
	\ab{y-y'}y^{p-2},&\text{if }p\in(1,2],\\
	\ab{y-y'}y'^{p-2},&\text{if }p\in [2,\infty).\\
	\end{cases} \]
	It follows that the following estimate holds, for every $\frac{1}{2}\leq y,y'\leq \frac{3}{2}$:
	\begin{equation}\label{close-to-1}
	\frac{\ab{yy'}^{\frac{p-2}4}}{\abs{y\ab{y}^{p-2}-y'\ab{y'}^{p-2}}^{\frac12}}\lesssim
	{\ab{y-y'}^{-\frac12}}.
	\end{equation}
	
\begin{proof}[Proof of Proposition \ref{prop:unc-bilinear-p-power}]	
	Without loss of generality, assume  $\ab{k-k'}\geq 2$.	
	We start by considering the situation when 0 is an endpoint of $\I_{k'}^\bullet$, i.e. $k'\in\{-1,0\}$. 
	Let $k'=-1$, so that $\I_{k'}^\bullet=(0,\frac{1}{2}]\cup[\frac{3}{2},2)$, split $g=g_\ell+g_r$, with 
	$g_\ell:=g\one_{(0,\frac{1}{2}]}$ and  $g_r:=g\one_{[\frac{3}{2},2)}$, and
 dyadically decompose 
	$$g_\ell=\sum_{j\geq 1}g_j,\; \text{ with } g_j:=g\one_{(2^{-(j+1)},2^{-j}]}.$$
	If $k\leq -3$, then \eqref{eq:separation-bound-kernel} implies
	\begin{align*}
	\norma{\mathcal{E}_p(f)\mathcal{E}_p(g_\ell)}_{L^3}
	&\lesssim \sum_{j\geq 1}
	\biggl(\int_{\R^2} |f(y)g_j(y')|^{\frac32} 
	\frac{\ab{yy'}^{\frac{p-2}4}}{\ab{\ab{y}^{p-1}-\ab{y'}^{p-1}}^{\frac12}}\d 
	y\d y'\biggr)^{\frac23}\\
	&\lesssim \sum_{j\geq 1}
	\biggl(2^{-j\frac{p-2}4}\int_{\R^2} |f(y)g_j(y')|^{\frac32} \d y\d y'\biggr)^{\frac23}
	\lesssim \sum_{j\geq 1}
	\biggl(2^{-j\frac{p-2}4}2^{\frac k4}2^{-\frac j4}\norma{f}_{L^2}^{\frac32}\norma{g_j}_{L^2}^{\frac32}\biggr)^{\frac23}\\
	&=2^{\frac k6}\norma{f}_{L^2}\sum_{j\geq 1}
	2^{-j\frac{p-1}6}\norma{g_j}_{L^2}\lesssim 2^{\frac k6}\norma{f}_{L^2}\norma{g_\ell}_{L^2}
	\lesssim 2^{-\frac{\ab{k-k'}}6}\norma{f}_{L^2}\norma{g}_{L^2}.
	\end{align*}
	If $k\geq 1$, then Corollary \ref{cor:bilinear-separated} applies, and directly yields
	\[ \norma{\mathcal{E}_p(f)\mathcal{E}_p(g_\ell)}_{L^3}
	\lesssim 2^{-\ab{k-k'}\frac{p-1}6}\norma{f}_{L^2}\norma{g}_{L^2}. \]
	A similar analysis applies to $g_r$.
	Setting $\beta:=\min\{\frac16,\frac{p-1}6\}$, we conclude that, if $k'=-1$ and $|k-k'|\geq 2$, then
	$$\norma{\mathcal{E}_p(f)\mathcal{E}_p(g)}_{L^3}\lesssim 2^{-\beta\ab{k-k'}}\norma{f}_{L^2}\norma{g}_{L^2}.$$
	The case $k'=0$ admits a similar treatment.
	If $k,k'\leq -2$ and $k-k'\geq 2$, then \eqref{close-to-1} implies
	\[ \norma{\mathcal{E}_p(f)\mathcal{E}_p(g)}_{L^3}\lesssim \frac{2^{\frac k6}2^{\frac{k'}6}}{2^{\frac k3}} \norma{f}_{L^2}\norma{g}_{L^2}=2^{-\frac{\ab{k-k'}}6}\norma{f}_{L^2}\norma{g}_{L^2}. \]
  Finally, the remaining cases can  be handled in a similar way by Corollary \ref{cor:bilinear-separated}.
\end{proof}
\begin{corollary}\label{bilinear-separated-1}
	Let $p>1$ and  $k,k'\in\Z$ be such that $k'\leq k$. 
	Let $\beta=\min\{\frac{1}{6},\frac{p-1}{6}\}$.
	Then
	\begin{equation}\label{bilinear-exponential-decay-1}
	\norma{\mathcal{E}_p(f)\mathcal{E}_p(g)}_{L^3(\R^2)}\lesssim_p 
	2^{-\beta\ab{k-k'}}\norma{f}_{L^2(\R)}\norma{g}_{L^2(\R)},
	\end{equation}
	for every $f,g\in L^2(\R)$ satisfying $\supp 
	f\subseteq \{\ab{y-1}\geq 2^k\}$ and $\supp g\subseteq \{\ab{y'-1}\leq 2^{k'} \}$. 
	\end{corollary}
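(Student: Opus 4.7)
The statement is the exact analogue of Corollary \ref{cor:bilinear-separated} but with the decomposition centered at $1$ rather than at the origin, so my plan is to imitate that proof verbatim, replacing Proposition \ref{prop:bilinear-p-power} by its centered counterpart Proposition \ref{prop:unc-bilinear-p-power} which already supplies the exponent $\beta=\min\{\tfrac16,\tfrac{p-1}{6}\}$.

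\textbf{Step 1: Dyadic decomposition.} Writing $f_j:=f\one_{\I_j^\bullet}$ and $g_{j'}:=g\one_{\I_{j'}^\bullet}$, the support hypotheses give
\[f=\sum_{j\geq k}f_j,\qquad g=\sum_{j'<k'}g_{j'},\]
with the sums orthogonal in $L^2$, so that $\sum_{j\geq k}\|f_j\|_{L^2}^2=\|f\|_{L^2}^2$ and likewise for $g$.

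\textbf{Step 2: Triangle inequality and bilinear estimate.} By the triangle inequality in $L^3$ and Proposition \ref{prop:unc-bilinear-p-power} applied to each pair $(f_j,g_{j'})$,
\[\|\mathcal{E}_p(f)\mathcal{E}_p(g)\|_{L^3(\R^2)}\leq\sum_{\substack{j\geq k\\ j'<k'}}\|\mathcal{E}_p(f_j)\mathcal{E}_p(g_{j'})\|_{L^3}\lesssim_p\sum_{\substack{j\geq k\\ j'<k'}}2^{-\beta|j-j'|}\|f_j\|_{L^2}\|g_{j'}\|_{L^2}.\]

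\textbf{Step 3: Cauchy--Schwarz, orthogonality, geometric sum.} Apply Cauchy--Schwarz to the double sum with the weight $2^{-\beta|j-j'|}$ distributed as $2^{-\beta|j-j'|/2}\cdot 2^{-\beta|j-j'|/2}$; after invoking $L^2$-orthogonality one obtains
\[\|\mathcal{E}_p(f)\mathcal{E}_p(g)\|_{L^3}\lesssim_p\biggl(\sum_{j\geq k,\,j'<k'}2^{-\beta|j-j'|}\biggr)^{\!1/2}\!\cdot\biggl(\sum_{j\geq k,\,j'<k'}2^{-\beta|j-j'|}\|f_j\|_{L^2}^2\|g_{j'}\|_{L^2}^2\biggr)^{\!1/2}.\]
Since $j\geq k>k'>j'$, one has $|j-j'|=(j-k)+(k-k')+(k'-j')$, so each of the inner sums factors as a product of three geometric series, each comparable to its largest term. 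The dominant contribution comes from $j=k$, $j'=k'-1$, producing the factor $2^{-\beta|k-k'|}$; trivially bounding $\|f_j\|_{L^2}\|g_{j'}\|_{L^2}\leq \|f\|_{L^2}\|g\|_{L^2}$ then yields the claimed estimate \eqref{bilinear-exponential-decay-1}.

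\textbf{Anticipated difficulty.} There is essentially none: all the analytic content — in particular the delicate analysis near the endpoints $y=0,1,2$ where the Jacobian \eqref{eq:JacobianFactorLook} can degenerate — has already been absorbed into Proposition \ref{prop:unc-bilinear-p-power}. The only small care needed is to verify that the triple geometric series does not inflate the exponential factor, which is standard once one writes $|j-j'|$ as a sum of three nonnegative pieces as above.
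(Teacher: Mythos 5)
Your proof is correct and is exactly the argument the paper intends: the corollary is stated without proof precisely because it follows from Proposition \ref{prop:unc-bilinear-p-power} by the same dyadic decomposition, triangle inequality, Cauchy--Schwarz, orthogonality, and geometric-series argument used to deduce Corollary \ref{cor:bilinear-separated} from Proposition \ref{prop:bilinear-p-power}. The only cosmetic difference is how you split the weight $2^{-\beta|j-j'|}$ inside Cauchy--Schwarz, which does not affect the outcome.
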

We finish this subsection by taking yet another look at the Jacobian factor \eqref{eq:JacobianFactorLook}.
This will be useful in \S \ref{sec:CapBounds} below.
Let $p\geq 2$.
If $yy'\leq 0$, then 
$\ab{J^{-1}(y,y')}=p(\ab{y}^{p-1}+\ab{y'}^{p-1})$, in which case 
\begin{align*}
\frac{\ab{yy'}^{\frac{p-2}4}}{(\ab{y}^{p-1}+\ab{y'}^{p-1})^{\frac12}}
\lesssim{(\ab{y}+\ab{y'})^{-\frac12}}={\ab{y-y'}^{-\frac12}},
\end{align*}
uniformly in $y,y'$. 
To handle the complementary case $yy'> 0$, note that,
if $p\geq 2$ and $0\leq a\leq b$, then
\begin{equation}\label{eq:difference-powers}
b^{p-1}-a^{p-1}\simeq(b-a)b^{p-2}.
\end{equation}
 It follows that, if $p\geq 2$ and $yy'> 0$, then 
\[\ab{J^{-1}(y,y')}=p\ab{\ab{y}^{p-1}-\ab{y'}^{p-1}}\simeq\ab{y-y'}\max\{\ab{y},\ab{y'}\}^{p-2},\]
and so  if additionally $\ab{y}\geq \ab{y'}$, then
\begin{equation*}
\frac{\ab{yy'}^{\frac{p-2}4}}{\ab{\ab{y}^{p-1}-\ab{y'}^{p-1}}^{\frac12}}
\lesssim \frac{\ab{yy'}^{\frac{p-2}4}}{\ab{y}^{\frac{p-2}2}\ab{y-y'}^{\frac12}}
\leq {\ab{y-y'}^{-\frac12}}.
\end{equation*}
Therefore the estimate
\begin{equation}\label{eq:upper-bound-p-geq-2}
\norma{\mathcal{E}_p(f)\mathcal{E}_p(g)}_{L^3(\R^2)}^{\frac32}\lesssim \int_{\R^2} 
\frac{|f(y)g(y')|^{\frac32}}{\ab{y-y'}^{\frac12}}
\d y\d y'
\end{equation}
holds as long as $p\geq 2$.
We cannot hope for such a bound if $1<p<2$ since \eqref{eq:difference-powers} fails in that case.
However,  if $\ab{y}\simeq\ab{y'}$,
then one can check in a similar way that  the estimate
\begin{equation}\label{eq:same-dyadic-scale-bound-T}
\norma{\mathcal{E}_p(f_k)\mathcal{E}_p(g_k)}_{L^3(\R^2)}^{\frac32}\lesssim \int_{\R^2} 
\frac{|f_k(y)g_k(y')|^{\frac32}}{\ab{y-y'}^{\frac12}}
\d y\d y',
\end{equation}
holds for any $p>1$ and  functions $f_k,g_k$ which are both supported on $I_k^\bullet$.

\subsection{Cap bounds}\label{sec:CapBounds}
An inspection of the proof of Proposition \ref{prop:bilinear-p-power} reveals that if $\supp f\subseteq 
I_k^\bullet$ and $\supp g\subseteq I_{k'}^\bullet$, 
for some $k,k'\in\Z$ satisfying $k-k'\geq2$, then
\begin{equation}\label{eq:bilinear-refinement}
\begin{split}
\norma{\mathcal{E}_p(f)\mathcal{E}_p(g)}_{L^3(\R^2)}
&\lesssim 2^{-\ab{k-k'}\frac{p-1}{6}}\biggl(\ab{I_{k}}^{-\frac14}\int_{I_k^\bullet}|f|^{\frac32}\biggr)^{\frac23}\biggl(\ab{I_{k'}}^{-\frac14}\int_{I_{k'}^\bullet}|g|^{\frac32}\biggr)^{\frac23}\\
&\lesssim 2^{-\ab{k-k'}\frac{p-1}{6}}
\Lambda(f)^{\frac29}\Lambda(g)^{\frac29}\norma{f}_{L^2(\R)}^{\frac23}\norma{g}_{L^2(\R)}^{\frac23},
\end{split}
\end{equation}
where the quantity $\Lambda(f)$ is defined via
\begin{equation}\label{eq:DefLambdaF}
\Lambda(f):=\sup_{k\in\Z}\ab{I_{k}}^{-\frac14}\int_{I_k^\bullet}|f|^{\frac32}.
\end{equation}
The purpose of this subsection is to develop on this observation.
Given $f\in L^2(\R)$, write $f=\sum_{k\in\Z} f_k$, with $f_k:=f\one_{I_k^\bullet}$.
Our first result is the following.
\begin{proposition}\label{eq:cubic-refinement}
	Let $p>1$. Then the following estimates hold, for every $f\in L^2(\R)$:
	\begin{align}
	\norma{\mathcal{E}_p(f)}^3_{L^6(\R^2)}&\lesssim_p\sum_{k\in\Z}\norma{f_k}_{L^2(\R)}^3,\label{eq:refined}\\
	\norma{\mathcal{E}_p(f)}_{L^6(\R^2)}^3\lesssim_p 
	\sum_{k\in\Z}\norma{\mathcal{E}_p(f_k)}&_{L^6(\R^2)}^3+
\Lambda(f)^{\frac49}	\Bigl(\sum_{k\in\Z}\norma{f_k}_{L^2(\R)}^3\Bigr)^{\frac13}
	\norma{f}_{L^2(\R)}^{\frac43}.\label{eq:super-refined}
	\end{align}
\end{proposition}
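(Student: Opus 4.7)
The plan is to run everything through the identity $\|\mathcal{E}_p(f)\|_{L^6(\R^2)}^6=\|\mathcal{E}_p(f)^3\|_{L^2(\R^2)}^2$, pigeonholing on the largest dyadic scale. Write $F=\mathcal{E}_p(f)$, $F_k=\mathcal{E}_p(f_k)$, $a_k=\|f_k\|_{L^2}$, $b_k=\|F_k\|_{L^6}$, and $G_{<M}=\sum_{k<M}F_k$. Expanding the cube,
\[
F^3=\sum_M H_M,\qquad H_M=F_M^{\,3}+3F_M^{\,2}G_{<M}+3F_M G_{<M}^{\,2},
\]
where $H_M$ collects all trilinear contributions whose triple of dyadic indices has maximum equal to $M$. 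The key observation is that $H_M$ has 2D Fourier support concentrated in the annulus $|s|\sim 2^{Mp}$ in the second coordinate (inherited from $|y_1|^p+|y_2|^p+|y_3|^p$ with $\max k_i=M$), so the $H_M$ are pairwise orthogonal in $L^2(\R^2)$. Hence $\|F\|_{L^6}^6=\sum_M \|H_M\|_{L^2}^2$, and it suffices to bound each $\|H_M\|_{L^2}^2$ individually and sum.

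The pure diagonal piece is clean: $\|F_M^{\,3}\|_{L^2}=b_M^{\,3}$, and $\sum_M b_M^{\,6}\leq (\sum_M b_M^{\,3})^2$ by Cauchy--Schwarz. This produces the first term $\sum_k b_k^{\,3}$ on the right hand side of \eqref{eq:super-refined} after taking square roots. For the quadratic cross term I would apply H\"older in the form $\|F_M^{\,2}G_{<M}\|_{L^2}\leq \|F_M\|_{L^6}\|F_M G_{<M}\|_{L^3}$, expand $G_{<M}=\sum_{k<M}F_k$, and invoke the refined bilinear estimate \eqref{eq:bilinear-refinement}, which yields $\|F_M F_k\|_{L^3}\lesssim 2^{-|M-k|(p-1)/6}C_M^{\,2/3}C_k^{\,2/3}$ with $C_k:=|I_k|^{-1/4}\int_{I_k^\bullet}|f|^{3/2}$. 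Interpolating $C_k^{\,2/3}\lesssim \Lambda(f)^{2\theta/3}a_k^{1-\theta}$ with $\theta_M=2/3$ on the $M$-factor and $\theta_k=0$ on the low factors produces, after one Cauchy--Schwarz in $k<M$ (absorbed by $\|f\|_{L^2}$), the bound
\[
\|F_M^{\,2}G_{<M}\|_{L^2}^2\lesssim \Lambda(f)^{8/9}a_M^{\,8/3}\|f\|_{L^2}^{\,2}.
\]
Summing in $M$ and invoking the interpolation $\sum_M a_M^{\,8/3}\leq \|a\|_{\ell^2}^{2/3}\|a\|_{\ell^3}^{2}=\|f\|_{L^2}^{\,2/3}(\sum_k a_k^{\,3})^{2/3}$ delivers exactly the second term $\Lambda(f)^{8/9}(\sum a_k^{\,3})^{2/3}\|f\|_{L^2}^{8/3}$ needed upon squaring \eqref{eq:super-refined}.

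The triple cross term $\|F_M G_{<M}^{\,2}\|_{L^2}$ is where I expect the real work. My plan is to pigeonhole a second time on $L=\max$ among the two low indices, writing $F_M G_{<M}^{\,2}=\sum_{L<M}F_M(F_L^{\,2}+2F_L G_{<L})$, and to bound each summand via H\"older plus the bilinear estimate applied to the pair $F_M F_L$ (so that the $2^{-|M-L|(p-1)/6}$ decay is captured on the factor where it is strongest). The resulting estimate should match the same $\Lambda(f)^{8/9}(\sum a_k^{\,3})^{2/3}\|f\|_{L^2}^{8/3}$ bound after summing; Fourier disjointness of the sub-pieces in $L$ (again through a dyadic annulus argument in the second coordinate) is what avoids an unacceptable loss.

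Finally, for the weaker statement \eqref{eq:refined} I would run the same skeleton but with the crude bilinear bound $\|F_M F_k\|_{L^3}\lesssim 2^{-|M-k|(p-1)/6}a_M a_k$ (no cap factor). The one new ingredient is the elementary inequality $\sup_k a_k\leq (\sum_k a_k^{\,3})^{1/3}$, which lets one estimate the weighted $\ell^1$ convolution $\sum_{k<M}2^{-|M-k|(p-1)/6}a_k$ by $(\sum_k a_k^{\,3})^{1/3}$; combined with Strichartz on the remaining factor this gives $\|H_M\|_{L^2}\lesssim a_M^{\,3/2}(\sum_k a_k^{\,3})^{1/2}$, which squared and summed in $M$ yields \eqref{eq:refined}. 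The main obstacle throughout is choosing the interpolation exponents for the cap bound $C_k\lesssim \Lambda(f)^{\theta}a_k^{3(1-\theta)/2}$ so that the resulting $\ell^q$-sums over dyadic scales can be interpolated between $\ell^2$ (i.e.\ $\|f\|_{L^2}^2$) and $\ell^3$ (i.e.\ $\sum a_k^{\,3}$), which is exactly what forces the specific exponents $4/9$, $1/3$, and $4/3$ appearing in \eqref{eq:super-refined}.
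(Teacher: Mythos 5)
Your architecture (pigeonhole on the maximal dyadic index $M$, almost-orthogonality of the pieces $H_M$ via their Fourier support in the annulus $v\simeq 2^{Mp}$, then term-by-term estimates) is a genuinely different route from the paper, which simply applies the triangle inequality over all triples $(i,j,k)$, symmetrizes the bilinear decay via $\ab{j-k}\geq\tfrac13(\ab{i-j}+\ab{j-k}+\ab{k-i})$ for the maximal pair, and closes with H\"older; for \eqref{eq:super-refined} the paper splits into the near-diagonal set $\{\max\ab{i-j}\leq 1\}$ and its complement. Your treatment of the quadratic cross term $F_M^2G_{<M}$ is correct and complete (the interpolation $C_k^{2/3}\leq\Lambda(f)^{2\theta/3}a_k^{1-\theta}$ with $\theta=2/3$ on the high factor, and $\|a\|_{\ell^{8/3}}^{8/3}\leq\|a\|_{\ell^2}^{2/3}\|a\|_{\ell^3}^2$, do produce exactly the second term of \eqref{eq:super-refined}). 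Also note that the $H_M$ are not literally pairwise orthogonal — the annuli $[2^{Mp},3\cdot 2^{(M+1)p}]$ overlap — but they have bounded overlap, so $\|\sum_M H_M\|_{L^2}^2\lesssim_p\sum_M\|H_M\|_{L^2}^2$ still holds; this is a harmless fix.

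There are, however, three genuine gaps. First, the refined bilinear estimate \eqref{eq:bilinear-refinement} carrying the $\Lambda$-factors is only available for $\ab{k-k'}\geq 2$ (for adjacent or equal indices the proof of Proposition \ref{prop:bilinear-p-power} falls back on the trivial $L^2\times L^2\to L^3$ bound), so the terms $F_M^2F_{M-1}$ and $F_MF_{M-1}^2$ inside your cross terms cannot be fed into that estimate; they must be grouped with the diagonal and absorbed into $\sum_k b_k^3$ via $b_M^2b_{M\pm1}\leq b_M^3+b_{M\pm 1}^3$, exactly as the paper's set $S$ does. Second, the triple cross term $F_MG_{<M}^2$ — which you yourself flag as ``the real work'' — is left unverified, and the verification is not automatic: if you put the full weight $\theta=2/3$ on the $M$-factor of $\|F_MF_L\|_{L^3}$ and sum crudely over $L$, you end up needing $\sum_M a_M^{2/3}$, which diverges. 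One must instead distribute the cap weight onto the low factor (or sum the squares over $M$ with the symmetrized decay weights, Schur-style) to land on $\sum_M a_M^2S_M^2\leq(\sum a_M^3)^{2/3}(\sum S_M^6)^{1/3}$ with $\sum_MS_M^6\lesssim\|f\|_{L^2}^2$; the appeal to ``Fourier disjointness of the sub-pieces in $L$'' is neither needed nor justified here. Third, in your proof of \eqref{eq:refined} the claimed intermediate bound $\|H_M\|_{L^2}\lesssim a_M^{3/2}(\sum_k a_k^3)^{1/2}$ is false for the triple cross term: taking $a_{M-1}=1$ and $a_M=\eps$ small, the contribution of $F_MF_{M-1}^2$ is of size $a_Ma_{M-1}^2=\eps$, which is not $O(\eps^{3/2})$. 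The conclusion \eqref{eq:refined} survives because $\sum_M a_M^2a_{M-1}^4\lesssim\sum_ka_k^6\leq(\sum_ka_k^3)^2$, but you must sum the squares with the symmetric decay weights rather than bound each $\|H_M\|_{L^2}$ by a summable quantity — which is precisely the paper's H\"older/Schur step. With these repairs your scheme closes, but at the cost of rediscovering the paper's symmetrization; the paper's direct argument avoids the orthogonality step altogether.
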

\begin{proof}
	By the triangle inequality,
	\begin{align*}
	\norma{\mathcal{E}_p(f)}_{L^6}^3
	\leq	\sum_{(i,j,k)\in\Z^3}\norma{\mathcal{E}_p(f_i)\mathcal{E}_p(f_j)\mathcal{E}_p(f_k)}_{L^2}.
	\end{align*}
	For each triple $(i,j,k)$ in the previous sum, we lose no generality in assuming that 
	\begin{equation}\label{eq:maximality}
	\ab{j-k}=\max\{\ab{i'-j'}:i',j'\in\{i,j,k\}\}.
	\end{equation}
	H\"older's inequality and Proposition \ref{prop:bilinear-p-power} then imply
	\begin{equation*}
	\norma{\mathcal{E}_p(f_i)\mathcal{E}_p(f_j)\mathcal{E}_p(f_k)}_{L^2}
	\lesssim 2^{-\ab{j-k}\frac{p-1}6}\norma{f_i}_{L^2}\norma{f_j}_{L^2}\norma{f_k}_{L^2}.
	\end{equation*}
	By the maximality of $\ab{j-k}$, we have that $\ab{j-k}\geq 
	\frac{1}{3}\ab{i-j}+\frac{1}{3}\ab{j-k}+\frac{1}{3}\ab{k-i}$, and hence 
	\[\norma{\mathcal{E}_p(f)}_{L^6}^3\lesssim 
	\sum_{(i,j,k)\in\Z^3}2^{-\ab{i-j}\frac{p-1}{18}}2^{-\ab{j-k}\frac{p-1}{18}}2^{-\ab{k-i}\frac{p-1}{18}}
	\norma{f_i}_{L^2}\norma{f_j}_{L^2}\norma{f_k}_{L^2}.\]
	A final application of H\"older's inequality  yields \eqref{eq:refined}.
Estimate \eqref{eq:super-refined} follows from similar considerations which we now detail. 
Let $S:=\{(i,j,k)\in\Z^3\colon \max\{\ab{i-j},\ab{j-k},\ab{k-i}\}\leq 1 \}$ and $S^\complement:=\Z^3\setminus S$. 
Split the sum into diagonal and off-diagonal contributions,
	\begin{align*}
	\norma{\mathcal{E}_p(f)}_{L^6}^3
	\leq  \Norma{\sum_{(i,j,k)\in S}\mathcal{E}_p(f_i) \mathcal{E}_p(f_j) \mathcal{E}_p(f_k)}_{L^2}+\Norma{\sum_{(i,j,k)\in 
	S^\complement}\mathcal{E}_p(f_i) \mathcal{E}_p(f_j) \mathcal{E}_p(f_k)}_{L^2},
    \end{align*}
        and analyze the two terms separately. For the diagonal term, note that
        	\begin{align*}
	\Norma{&\sum_{(i,j,k)\in S}\mathcal{E}_p(f_i)\mathcal{E}_p(f_j)\mathcal{E}_p(f_k)}_{L^2}\\
	&\leq \sum_{k\in\Z} \big(3\norma{\mathcal{E}_p(f_k)\mathcal{E}_p(f_k)\mathcal{E}_p(f_{k+1})}_{L^2}
	+3\norma{\mathcal{E}_p(f_{k-1})\mathcal{E}_p(f_k)\mathcal{E}_p(f_{k})}_{L^2}
	+\norma{\mathcal{E}_p(f_k)\mathcal{E}_p(f_k)\mathcal{E}_p(f_k)}_{L^2}\big)\\
	&\leq \sum_{k\in\Z} \big(3\norma{\mathcal{E}_p(f_k)}_{L^6}^2\norma{ 
		\mathcal{E}_p(f_{k+1})}_{L^6}+3\norma{\mathcal{E}_p(f_{k-1})}_{L^6}\norma{\mathcal{E}_p(f_k)}_{L^6}^2+\norma{\mathcal{E}_p(f_k)}_{L^6}^3\big)
		\lesssim\sum_{k\in\Z}\norma{\mathcal{E}_p(f_k)}_{L^6}^3.
	\end{align*}
		To handle the off-diagonal term,  note that estimate \eqref{eq:bilinear-refinement} implies 
	\begin{align*}
	\Norma{\sum_{(i,j,k)\in S^\complement}\mathcal{E}_p(f_i)\mathcal{E}_p(f_j)\mathcal{E}_p(f_k)}_{L^2}
	&\lesssim \sideset{}{'}\sum_{(i,j,k):\ab{j-k}\geq 2}\norma{f_i}_{L^2}\norma{\mathcal{E}_p(f_j)\mathcal{E}_p(f_k)}_{L^3}\\
	&\lesssim \Lambda(f)^{\frac49}
	\sideset{}{'}\sum_{(i,j,k):\ab{j-k}\geq 
		2}2^{-\ab{j-k}\frac{p-1}{6}}\norma{f_i}_{L^2}\norma{f_j}_{L^2}^{\frac23}\norma{f_k}_{L^2}^{\frac23},
	\end{align*}
	where the sum $\Sigma'$ is taken over triples $(i,j,k)\in S^\complement$ for which $(j,k)$ satisfies the maximality assumption \eqref{eq:maximality}.
	It follows that
	\begin{align*}
	\Norma{\sum_{(i,j,k)\in S^\complement}\mathcal{E}_p(f_i)\mathcal{E}_p(f_j)\mathcal{E}_p(f_k)}_{L^2}
	&\lesssim \Lambda(f)^{\frac49} \sum_{i,j,k}2^{-(\ab{i-j}+\ab{j-k}+\ab{k-i})\frac{p-1}{18}}
	\norma{f_i}_{L^2}\norma{f_j}_{L^2}^{\frac23}\norma{f_k}_{L^2}^{\frac23}\\
	&\lesssim \Lambda(f)^{\frac49}
	\Bigl(\sum_{k\in\Z}\norma{f_k}_{L^2}^3\Bigr)^{\frac13}\Bigl(\sum_{k\in\Z}\norma{f_k}_{L^2}^2\Bigr)^{\frac23}.
	\end{align*}
This implies \eqref{eq:super-refined} at once, and concludes the proof of the proposition.
\end{proof}

\noindent The following $L^2$ dyadic cap estimate is a direct consequence of \eqref{eq:refined}.
\begin{corollary}\label{cor:dyadic-improvement}
	Let $p>1$. Then, for every $f\in L^2(\R)$,
	\[ \norma{\mathcal{E}_p(f)}^3_{L^6(\R^2)}\lesssim_p 
	\Big(\sup_{k\in\Z}\norma{f_k}_{L^2(\R)}\Big)\norma{f}_{L^2(\R)}^2.  \]
\end{corollary}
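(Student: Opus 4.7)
The plan is extremely short: Corollary \ref{cor:dyadic-improvement} is a one-step consequence of estimate \eqref{eq:refined} in Proposition \ref{eq:cubic-refinement} combined with pairwise disjointness of the supports of the pieces $f_k$. No new analytic input is needed.

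More precisely, I would start from \eqref{eq:refined},
\[
\norma{\mathcal{E}_p(f)}^3_{L^6(\R^2)}\lesssim_p\sum_{k\in\Z}\norma{f_k}_{L^2(\R)}^3,
\]
and then factor out the supremum:
\[
\sum_{k\in\Z}\norma{f_k}_{L^2(\R)}^3
=\sum_{k\in\Z}\norma{f_k}_{L^2(\R)}\cdot\norma{f_k}_{L^2(\R)}^2
\leq\Big(\sup_{k\in\Z}\norma{f_k}_{L^2(\R)}\Big)\sum_{k\in\Z}\norma{f_k}_{L^2(\R)}^2.
\]
Since the dyadic annuli $\{I_k^\bullet\}_{k\in\Z}$ are pairwise disjoint and cover $\R\setminus\{0\}$, the functions $f_k=f\one_{I_k^\bullet}$ have disjoint supports, so by orthogonality (really, by Fubini)
\[
\sum_{k\in\Z}\norma{f_k}_{L^2(\R)}^2=\norma{f}_{L^2(\R)}^2,
\]
which yields the desired bound. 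The only conceivable obstacle is verifying that the dyadic partition covers $\R$ up to a null set, which is immediate from the definition of $I_k^\bullet$ given at the start of \S \ref{sec:Prelims}; the value $f(0)$ is irrelevant for $L^2$-norms.
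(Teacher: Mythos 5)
Your argument is exactly the one the paper intends: the authors state that the corollary is a direct consequence of \eqref{eq:refined}, and the factoring of the supremum together with $\sum_k\norma{f_k}_{L^2}^2=\norma{f}_{L^2}^2$ (from disjointness of the dyadic annuli) is the implicit one-line justification. Correct and complete.
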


We now derive a cap bound similar to \cite[Lemma 1.2]{JPS10} and \cite[Lemma 1.2]{Sh09}.
\begin{proposition}\label{cap-bound-p-power}
	Let $p>1$. Then the following estimate holds:
	\begin{align}
	\norma{\mathcal{E}_p(f)}_{L^6(\R^2)}^3&\lesssim_{p}
        	\Bigl(\sup_{k\in\Z}\sup_{I\subseteq  
	I_k^\bullet}\ab{I}^{-\frac{1}{6}}\norma{f}_{L^{\frac32}(I)}\Bigr)^{\frac23} \norma{f}_{L^2(\R)}^{\frac73},
	\label{eq:12-cap-statement}
	\end{align}
	for every $f\in L^2(\R)$,
	 where the inner supremum is taken over all subintervals $I\subseteq  I_k^\bullet$.
\end{proposition}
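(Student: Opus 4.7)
The plan is to apply the super-refined cubic estimate \eqref{eq:super-refined} from Proposition~\ref{eq:cubic-refinement} to the decomposition $f = \sum_k f_k$ (with $f_k := f\one_{I_k^\bullet}$), and to bound each of the two resulting contributions by $Q(f)^{2/3}\norma{f}_{L^2}^{7/3}$, where $Q(f) := \sup_{k,\,I\subseteq I_k^\bullet}\ab{I}^{-1/6}\norma{f}_{L^{3/2}(I)}$ denotes the cap norm appearing on the right-hand side of \eqref{eq:12-cap-statement}.

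The cap-type second term is the easier one. Specializing the supremum defining $Q(f)$ to $I = I_k^\bullet$ yields $\ab{I_k}^{-1/6}\norma{f}_{L^{3/2}(I_k^\bullet)} \leq Q(f)$, which upon raising to the $3/2$-power gives $\Lambda(f) \leq Q(f)^{3/2}$. Combined with the embedding $\ell^2\hookrightarrow\ell^3$ and the $L^2$-orthogonality identity $\sum_k\norma{f_k}_{L^2}^2=\norma{f}_{L^2}^2$, one obtains $\bigl(\sum_k \norma{f_k}_{L^2}^3\bigr)^{1/3} \leq \norma{f}_{L^2}$ and in turn
\[
\Lambda(f)^{4/9}\Bigl(\sum_k \norma{f_k}_{L^2}^3\Bigr)^{1/3}\norma{f}_{L^2}^{4/3} \leq Q(f)^{2/3}\norma{f}_{L^2}^{7/3},
\]
matching the required bound.

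For the diagonal first term $\sum_k \norma{\mathcal{E}_p(f_k)}_{L^6}^3$, each $f_k$ is supported on a single dyadic shell $I_k^\bullet$, and the scaling identity $\norma{\mathcal{E}_p(f_k)}_{L^6} = 2^{k/2}\norma{\mathcal{E}_p(\widetilde f_k)}_{L^6}$ with $\widetilde f_k(z) := f_k(2^k z)$ transfers Proposition~\ref{prop:unc-bilinear-p-power} to a bilinear estimate with exponential decay $2^{-\beta\ab{j-j'}}$, $\beta = \min\{1/6,\,(p-1)/6\}$, for functions supported on separated dyadic sub-intervals of $I_k^\bullet$. Reproducing the argument of Proposition~\ref{eq:cubic-refinement} with this rescaled bilinear input in place of Proposition~\ref{prop:bilinear-p-power}, and iterating on successively finer dyadic sub-intervals of $I_k^\bullet$, produces a cap-refined bound
\[
\norma{\mathcal{E}_p(f_k)}_{L^6}^3 \lesssim Q_k(f_k)^{2/3}\norma{f_k}_{L^2}^{7/3},
\]
where $Q_k(f_k)$ is the cap norm restricted to subintervals of $I_k^\bullet$, and therefore $Q_k(f_k) \leq Q(f)$. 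Summing in $k$ and invoking the embedding $\ell^2\hookrightarrow\ell^{7/3}$ (which gives $\sum_k \norma{f_k}_{L^2}^{7/3} \leq \norma{f}_{L^2}^{7/3}$) completes the estimate.

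The main obstacle is the construction of the scaled analogue of \eqref{eq:super-refined} adapted to each $I_k^\bullet$ and the verification that the iteration converges uniformly in $k$. The exponential bilinear decay $2^{-\beta\ab{j-j'}}$ supplied by Proposition~\ref{prop:unc-bilinear-p-power} is crucial, as it ensures summability of the geometric series accumulated through successive decompositions at finer and finer scales. The decision to define $Q(f)$ as a supremum over \emph{all} subintervals of $I_k^\bullet$, rather than only the dyadic ones, is what guarantees that the single quantity $Q(f)$ dominates the cap norms arising at every scale of the iteration.
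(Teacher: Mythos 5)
Your reduction via \eqref{eq:super-refined} matches the paper's, and your treatment of the second (off-diagonal) term is correct: $\Lambda(f)\leq Q(f)^{3/2}$ together with $\ell^2\hookrightarrow\ell^3$ does yield the bound $Q(f)^{2/3}\norma{f}_{L^2}^{7/3}$, where $Q(f)$ denotes your cap norm. The gap is in the diagonal term. The single-shell estimate $\norma{\mathcal{E}_p(f_k)}_{L^6}^3\lesssim Q(f_k)^{2/3}\norma{f_k}_{L^2}^{7/3}$ — equivalently \eqref{eq:RefinedJPSstyle} with $q=\frac32$ — is the actual content of the proposition, and the iteration you sketch does not deliver it. First, Proposition \ref{prop:unc-bilinear-p-power} provides bilinear decay only between dyadic annuli around a single fixed center; iterating such decompositions inside $I_k^\bullet$ produces a tree of caps anchored at chosen centers, whereas dominating the kernel $\ab{y-y'}^{-1/2}$ in \eqref{eq:p12-below} requires bilinear control for \emph{arbitrary} pairs of equal-length, comparably separated subintervals of $I_k^\bullet$ — that is, a Whitney decomposition of $I_k^\bullet\times I_k^\bullet$ off the diagonal, which is what the paper uses.

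Second, and more seriously, even granting adequate bilinear input at every scale, your iteration has no gain per scale: the diagonal part of the scale-$m$ decomposition is handed to scale $m+1$ with an implicit constant at least $1$, and the off-diagonal contribution collected at each scale is of size $\simeq Q(f)^{2/3}\sum_J\norma{f\one_J}_{L^2}^2=Q(f)^{2/3}\norma{f_k}_{L^2}^2$ by disjointness of the caps at that scale; summed over infinitely many scales this diverges. The convergence you defer to (``verification that the iteration converges uniformly in $k$'') is precisely the missing ingredient. In the paper it is supplied not by bilinear decay but by the level-set decomposition $f_{k,I}=\sum_n f_{k,I,n}$ together with Chebyshev's inequality and the normalization \eqref{eq:SupNormalization}, which produce the factor $2^{-\ab{n}\eps}$ in \eqref{eq:SufficesToShow}; this quantifies the fact that $f_k$ cannot saturate the cap bound at every scale and every level simultaneously. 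Without an analogue of that step, your argument does not close.
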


\begin{proof}
We start by considering the case when $f=f_k(=f\one_{I_k^\bullet})$. 
From \eqref{eq:same-dyadic-scale-bound-T}, we have that
\begin{equation}\label{eq:p12-below}
\norma{\mathcal{E}_p(f_k)}_{L^6}^3
\lesssim \int_{\R^2} \frac{|f_k(y)f_k(y')|^{\frac32}}{\ab{y-y'}^{\frac12}}
\d y\d y'.
\end{equation}
Arguing as in as in \cite{JPS10, Sh09} we obtain, for every $q>1$, that
\begin{equation}\label{eq:RefinedJPSstyle}
\norma{\mathcal{E}_p(f_k)}_{L^6}\lesssim 
\bigl(\sup_{I\subseteq 
I_k^\bullet}\ab{I}^{\frac{1}{2}-\frac{1}{q}}\norma{f_k}_{L^q(I)}\bigr)^{\frac13}\norma{f_k}_{L^2(\R)}^{\frac23}.
\end{equation}
For the convenience of the reader, we provide the details. 
In light of \eqref{eq:p12-below}, we may assume $f_k\geq 0$.
Normalizing the supremum in \eqref{eq:RefinedJPSstyle} to equal 1, we may further assume that
\begin{equation}\label{eq:SupNormalization}
\int_I f_k^q\leq |I|^{1-\frac q2},\text{ for every subinterval } I\subseteq I_k^\bullet.
\end{equation}
Denote the collection of dyadic intervals of length $2^j$ by $\mathcal{D}_j:=\{2^j[k,k+1): k\in\Z\}$, and set $\mathcal{D}:=\bigcup_{j\in\Z}\mathcal{D}_j$. 
We perform a Whitney decomposition of $\R^2\setminus\{(y,y):y\in\R\}$ in the following manner, see for instance \cite[Lemma 10]{DMPS17} and \cite[Proof of Theorem 1.2]{BV07}. 
Given distinct $y,y'\in\R$, there exists a unique pair of maximal dyadic intervals $I,I'$ 
satisfying
$$(y,y')\in I\times I',\;\;|I|=|I'|,\;\text{ and dist}(I,I')\geq 4|I|.$$
Let {$\mathfrak{I}$} denote the collection of all such pairs as $y\neq y'$ ranges over $\R\times\R.$
Then 
\[\sum_{(I,I')\in\mathfrak{I}}\one_I(y)\one_{I'}(y')=1,\quad \text{for every }(y,y')\in\R^2 \text{ with } y\neq y',\]
{and therefore}
$$f_k(y)f_k(y')=\sum_{(I,I')\in\mathfrak{I}} f_{k,I}(y)f_{k,I'}(y'), \text{ for a.e. }(y,y')\in\R^2,$$
where $f_{k,I}:=f_k\one_I$. 
Clearly, if $(y,y')\in I\times I'$ and $(I,I')\in\mathfrak{I}$, then $|y-y'|\simeq |I|$.
From this and \eqref{eq:p12-below}, 
we may choose a slightly larger dyadic interval containing $I\cup I'$ but  of length comparable to $|I|$ (still denoted by $I$), and
it suffices to show that
$$\sum_{I\in\mathcal{D}}\frac{1}{|I|^{\frac12}}\Big(\int f_{k,I}^{\frac32}\Big)^2\lesssim \int f_k^2.$$
We further decompose
$$f_{k,I}=\sum_{n\in\Z} f_{k,I,n},\text{ where }
f_{k,I,n}:=f_k\one_{\big\{y\in I:\frac{2^n}{|I|^{1/2}}\leq f_k(y)< \frac{2^{n+1}}{|I|^{1/2}}\big\}},$$
and note that it suffices to establish
\begin{equation}\label{eq:SufficesToShow}
\sum_{I\in\mathcal{D}}\frac{1}{|I|^{\frac12}}\Big(\int f_{k,I,n}^{\frac32}\Big)^2\lesssim 2^{-|n|\eps}\int f_k^2,
\end{equation}
for some $\eps>0$ and every $n\in\Z$.
By the Cauchy--Schwarz inequality,
$$\Big(\int f_{k,I,n}^{\frac32}\Big)^2\leq\Big(\int f_{k,I,n}^2\Big)\Big(\int f_{k,I,n}\Big).$$
By construction of $f_{k,I,n}$, Chebyshev's inequality, and normalization \eqref{eq:SupNormalization},
\begin{equation}\label{eq:EstimatePosN}
\int f_{k,I,n}
\leq\tfrac{2^{n+1}}{|I|^{1/2}}\big|\big\{y\in I: f_k(y)\geq\tfrac{2^n}{|I|^{1/2}}\big\}\big|
\leq\tfrac{2^{n+1}}{|I|^{1/2}}\frac{\int_I f_k^q}{2^{nq}|I|^{-q/2}}
\lesssim 2^{-|n|(q-1)} |I|^{\frac12},
\end{equation}
for  every $q>1$ and $n\geq 0$. If $n<0$, then the following simpler estimate suffices:
\begin{equation}\label{eq:EstimateNegN}
\int f_{k,I,n}\lesssim\tfrac{2^n}{|I|^{1/2}} |I|=2^{-|n|} |I|^{\frac12}.
\end{equation}
Combining \eqref{eq:EstimatePosN} and \eqref{eq:EstimateNegN}, we conclude
$$\sum_{I\in\mathcal{D}}\frac{1}{|I|^{\frac12}}\Big(\int f_{k,I,n}^{\frac32}\Big)^2
\lesssim 2^{-|n|\eps}\sum_{I\in\mathcal{D}}\int f_{k,I,n}^2,$$
for some $\eps>0$, from which we get the desired \eqref{eq:SufficesToShow} by noting that
$$\sum_{I\in\mathcal{D}}\int f_{k,I,n}^2
=\sum_{j\in\Z}\sum_{I\in\mathcal{D}_j}\int f_{k}^2 \one_{\{ f_k\simeq 2^{n-j/2}\}}
=\int_\R \Big(\sum_{j\in\Z: \atop f_k(y)\simeq 2^{n-j/2}} f_k^2(y)\Big) \d y\lesssim \int f_k^2.$$
This concludes the verification of \eqref{eq:RefinedJPSstyle}.
Recalling inequality \eqref{eq:super-refined}, and 
specializing  \eqref{eq:RefinedJPSstyle} to $q=\frac32$, yields
\begin{align*}
\norma{\mathcal{E}_p(f)}_{L^6}^3&\lesssim \Bigl(\sup_{k,I\subseteq 
I_k^\bullet}\ab{I}^{-\frac{1}{6}}\norma{f_k}_{L^{\frac32}(I)}\Bigr)\sum_{k\in\Z}
\norma{f_k}_{L^2}^{2}
+\Bigl(\sup_{k\in\Z}\ab{I_k}^{-\frac{1}{6}}\norma{f_k}_{L^{\frac32}}\Bigr)^{\frac23}\norma{f}_{L^2}^{\frac73}\\
&\lesssim 
\Bigl(\sup_{k\in\Z}\sup_{I\subseteq 
I_k^\bullet}\ab{I}^{-\frac{1}{6}}\norma{f_k}_{L^{\frac32}(I)}\Bigr)^{\frac23}\norma{f}_{L^2}^{\frac73},
\end{align*}
where the last line follows from H\"older's inequality. 
This concludes the proof.
\end{proof}

In the next section, it will be useful to have the $L^1$ version of \eqref{eq:12-cap-statement} at our disposal, and this is the content of the following result.

\begin{proposition}\label{prop:l1-cap-bound-p-power}
	Let $p>1$. Then there exist $\gamma\in(0,1)$  such that	
	\begin{align}
	\norma{\mathcal{E}_p(f)}_{L^6(\R^2)}
	&\lesssim_{p,\gamma}\bigl(\sup_{k\in\Z}\sup_{I\subseteq 
	I_k^\bullet}\ab{I}^{-\frac{1}{2}}\norma{f}_{L^1(I)}\bigr)^{\gamma}\norma{f}_{L^2(\R)}^{1-\gamma},
	\label{eq:dyadic-l1-cap-bound}
	\end{align}
	 for every	$f\in L^2(\R)$, 	 where the inner supremum is taken over all subintervals $I\subseteq  I_k^\bullet$.
\end{proposition}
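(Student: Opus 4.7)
The plan is to obtain \eqref{eq:dyadic-l1-cap-bound} by interpolating the $L^{3/2}$-cap bound \eqref{eq:12-cap-statement} with the straightforward energy estimate $\norma{\mathcal{E}_p(f)}_{L^2}\leq \norma{f}_{L^2}$, via a pointwise Hölder interpolation of $L^{3/2}$ between $L^1$ and $L^2$ on each sub-interval $I\subseteq I_k^\bullet$.

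Concretely, first I would record the Hölder interpolation inequality
\[
\norma{f}_{L^{3/2}(I)}\leq \norma{f}_{L^1(I)}^{1/3}\norma{f}_{L^2(I)}^{2/3},
\]
which follows from writing $\frac{1}{3/2}=\frac{1/3}{1}+\frac{2/3}{2}$.
Multiplying by $\ab{I}^{-1/6}$,
\[
\ab{I}^{-1/6}\norma{f}_{L^{3/2}(I)}\leq \bigl(\ab{I}^{-1/2}\norma{f}_{L^1(I)}\bigr)^{1/3}\norma{f}_{L^2(I)}^{2/3},
\]
and taking the supremum over all $k\in\Z$ and subintervals $I\subseteq I_k^\bullet$ yields
\[
\sup_{k,\,I\subseteq I_k^\bullet}\ab{I}^{-1/6}\norma{f}_{L^{3/2}(I)}\leq
\Bigl(\sup_{k,\,I\subseteq I_k^\bullet}\ab{I}^{-1/2}\norma{f}_{L^1(I)}\Bigr)^{1/3}\norma{f}_{L^2(\R)}^{2/3}.
\]

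Plugging this into the $L^{3/2}$-cap bound \eqref{eq:12-cap-statement} and taking $\frac13$-powers gives
\[
\norma{\mathcal{E}_p(f)}_{L^6(\R^2)}\lesssim_p \Bigl(\sup_{k,\,I\subseteq I_k^\bullet}\ab{I}^{-1/2}\norma{f}_{L^1(I)}\Bigr)^{2/27}\norma{f}_{L^2(\R)}^{25/27},
\]
so the conclusion holds with $\gamma=2/27\in(0,1)$. (Any $\gamma\in(0,2/27]$ could be obtained by trivially interpolating further with $\norma{\mathcal{E}_p(f)}_{L^6}\lesssim\norma{f}_{L^2}$, which follows from \eqref{eq:refined} together with $\norma{f_k}_{L^2}\leq\norma{f}_{L^2}$ and $\ell^2\hookrightarrow \ell^3$.)

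There is no real obstacle here: the essential analytic work, namely the Whitney-style argument producing \eqref{eq:RefinedJPSstyle} and the bilinear cascade leading to \eqref{eq:super-refined}, has already been carried out in the proof of Proposition \ref{cap-bound-p-power}. The only point that requires a momentary check is that the interpolation parameter produced ($\gamma=2/27$) lies strictly inside $(0,1)$, which is immediate, so the statement follows.
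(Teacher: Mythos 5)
Your proof is correct, and it takes a genuinely different (and shorter) route than the paper. You exploit that the exponent $3/2$ in \eqref{eq:12-cap-statement} lies strictly between $1$ and $2$, so the log-convexity bound $\norma{f}_{L^{3/2}(I)}\leq\norma{f}_{L^1(I)}^{1/3}\norma{f}_{L^2(I)}^{2/3}$ (Cauchy--Schwarz applied to $|f|^{1/2}\cdot|f|$) points in the useful direction, and the normalizations match exactly: $\ab{I}^{-1/6}=\big(\ab{I}^{-1/2}\big)^{1/3}$. Plugging into \eqref{eq:12-cap-statement} then gives $\gamma=2/27$, and your arithmetic checks out ($\tfrac{2}{27}+\tfrac{25}{27}=1$, as the scale invariance requires). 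The paper instead runs a level-set truncation in the spirit of \cite[Proposition 2.9]{CS12a}: it normalizes $\delta=\norma{\mathcal{E}_p(f)}_{L^6}\norma{f}_{L^2}^{-1}$, locates a single interval $I$ carrying a large $L^{3/2}$ mass, splits $f$ on $I$ at a height $R$ chosen so that the unbounded piece contributes at most half, and lower-bounds $\int_I\ab{g}$ via $R^{-1}\norma{g}_{L^2}^2$; this yields the (smaller) exponent $\gamma=\tfrac{2}{45}$. That machinery is genuinely needed when the intermediate cap bound lives at an exponent $\geq 2$, where H\"older runs the wrong way; since here $q=\tfrac32<2$, your interpolation bypasses it and even improves the exponent. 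As only the existence of some $\gamma\in(0,1)$ is used later (in the proof of Proposition \ref{prop:existence-vs-concentration}), either value is equally serviceable.
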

\noindent The proof below yields $\gamma=\frac2{45}$, and is inspired by \cite[Proposition 2.9]{CS12a}.

\begin{proof}[Proof of Proposition \ref{prop:l1-cap-bound-p-power}]
Set $\delta:={\norma{\mathcal{E}_p(f)}_{L^6}}{\norma{f}^{-1}_{L^2}}$.
From \eqref{eq:12-cap-statement}  we have that
\[ \sup_{k\in\Z}\sup_{I\subseteq I_k^\bullet}\ab{I}^{-\frac16}\norma{f}_{L^{\frac32}(I)}\gtrsim\delta^{\frac92}\norma{f}_{L^2(\R)}. \]
Then there exist $k\in\Z$ and an interval $I\subseteq I_k^\bullet$, such that 
\[ \int_I \ab{f}^{\frac32}\geq 
c_0\delta^{\frac{27}4}\ab{I}^{\frac14}\norma{f}_{L^2(\R)}^{\frac32}, \]
for a universal constant $c_0$ (independent of $f, \delta$).
Given $R\geq 1$, define the set $E:=\{y\in I\colon\ab{f(y)}\leq R \}$. 
Set $g:=f\one_E$ and $h:=f-g$. 
Then $g$ and $h$ have disjoint supports, and $\norma{g}_{L^{\infty}}\leq R$. 
Since $\ab{h(y)}\geq R$ for almost every $y\in I$ for which $h(y)\neq 0$,  we have
\[ \int_I \ab{h}^{\frac32}\leq R^{-\frac12}\int_I \ab{h}^2\leq R^{-\frac12}\norma{f}_{L^2(\R)}^2.\]
Choose $R$ satisfying $R^{-\frac12}=\frac{c_0}{2}\delta^{\frac{27}4}\ab{I}^{\frac14}\norma{f}_{L^2(\R)}^{-\frac12}$. 
Then
\[ \int_I\ab{g}^{\frac32}=\int_I\ab{f}^{\frac32}-\int_I\ab{h}^{\frac32}\geq 
\frac{c_0}{2}\delta^{\frac{27}4}\ab{I}^{\frac14}\norma{f}_{L^2(\R)}^{\frac32}. \]
Since $g$ is supported on $I$, H\"older's inequality implies
\begin{equation}\label{eq:LowerBoundL2}
 \norma{g}_{L^2}\geq \ab{I}^{-\frac{1}{6}}\norma{g}_{L^{\frac32}}\geq c_1\delta^{\frac92}\norma{f}_{L^2}, 
 \end{equation}
where $c_1$ is universal.
Since $\|g\|_{L^\infty}\leq R$, we have (by definition of $R$) that
\[ \ab{g(y)}\leq c_2\delta^{-\frac{27}{2}}\ab{I}^{-\frac12}\norma{f}_{L^2(\R)}\one_I(y), \text{ for almost every } y\in\R, \]
where $c_2$ is universal. 
Together with \eqref{eq:LowerBoundL2}, this implies the lower bound 
\begin{equation*}
\int_I\ab{g}\geq \int_I \ab{g}\frac{\ab{g}}{ c_2\delta^{-\frac{27}{2}}\ab{I}^{-\frac12}\norma{f}_{L^2}}
=c_2^{-1}\delta^{\frac{27}{2}}\ab{I}^{\frac12}\frac{\norma{g}_{L^2}^2}{\norma{f}_{L^2}}
\geq c_3\delta^{\frac{45}{2}}\ab{I}^{\frac12}\norma{f}_{L^2}.
\end{equation*}
where $c_3$ is universal.
Since $\ab{g}\leq \ab{f}$, it follows that
\[c_3\delta^{\frac{45}{2}}\ab{I}^{\frac12}\norma{f}_{L^2(\R)}
\leq \norma{g}_{L^1(I)} \leq\norma{f}_{L^1(I)}. \]
Recalling the definition of $\delta$, we obtain \eqref{eq:dyadic-l1-cap-bound} with $\gamma=\frac2{45}$.
This completes the proof.
\end{proof}

\section{Existence versus concentration}\label{sec:Dichotomy}
This section is devoted to the proof of Theorem \ref{thm:Thm2}.
Start by observing the scale invariance of \eqref{eq:SharpConvolutionFormGenP}, or equivalently that of \eqref{eq:SharpExtensionFormGenP}. 
Indeed, if $f_\la(y):=f(\la y)$, then
$\norma{f_\la}_{L^2(\R)}=\la^{-1/2}\norma{f}_{L^2(\R)}$. 
On the other hand, $\mathcal{E}_p(f_\la)(x,t)=\la^{-(p+4)/6}\mathcal{E}_p(f)(x/\la,t/\la^p)$, and so
$$\norma{\mathcal{E}_p(f_\la)}_{L^6(\R^2)}
=\la^{-\frac{p+4}6+\frac{p+1}6}\norma{\mathcal{E}_p(f)}_{L^6(\R^2)}
=\la^{-\frac12}\norma{\mathcal{E}_p(f)}_{L^6(\R^2)}.$$
In particular, given any sequence $\{a_n\}\subset\R\setminus\{0\}$,
 if $\{f_n\}$ is an $L^2$-normalized extremizing sequence for \eqref{eq:SharpExtensionFormGenP}, then so is $\{\ab{a_n}^{1/2}f_n(a_n \cdot)\}$.
 
 We come to the first main result of this section.

\begin{proposition}\label{prop:existence-vs-concentration}
	Let $\{f_n\}\subset L^2(\R)$ be an $L^2$-normalized extremizing sequence of nonnegative functions for \eqref{eq:SharpExtensionFormGenP}. 
	Then there exists a subsequence $\{f_{n_k}\}$, and a sequence $\{a_k\}\subset \R\setminus\{0\}$,  such that the rescaled sequence $\{g_k\}$, $g_k:=\ab{a_k}^{1/2}f_{n_k}(a_k \cdot)$, satisfies one of the 
	following conditions:
	\begin{enumerate}
		\item[(i)] There exists $g\in L^2(\R)$ such that $g_{k}\to g$ in $L^2(\R)$, as $k\to\infty$, or
		\item[(ii)] $\{g_k\}$  concentrates at $y_0=1$.
	\end{enumerate}
\end{proposition}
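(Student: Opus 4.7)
The plan is to (1) select the correct scale by exploiting the cap bound so the rescaled sequence carries uniform mass on a fixed dyadic cap, then (2) run a concentration-compactness dichotomy adapted to the only symmetries of \eqref{eq:SharpExtensionFormGenP} (scaling and reflection), and finally (3) convert the compactness alternative into either $L^2$ convergence or concentration at a single point.

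\textbf{Stage 1 (Scale selection).} Since $\norma{\mathcal{E}_p(f_n)}_{L^6}\to {\bf E}_p>0$, Corollary \ref{cor:dyadic-improvement} produces $\eta>0$ and integers $k_n\in\Z$ with $\norma{f_n\one_{I_{k_n}^\bullet}}_{L^2}\geq \eta$ for every $n$. Since $I_{k_n}^\bullet=(-2^{k_n+1},-2^{k_n}]\cup[2^{k_n},2^{k_n+1})$ has two components, a subsequence argument lets me assume half the mass always lies in one of them; I then set $a_n=\pm 2^{k_n}$ with the sign chosen so that the rescaled sequence $g_n(y):=\ab{a_n}^{1/2}f_n(a_n y)$ satisfies $\norma{g_n}_{L^2([1,2))}\geq \eta/\sqrt{2}$. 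By scale invariance $g_n$ is nonnegative, $L^2$-normalized, and still extremizing.

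\textbf{Stage 2 (Dichotomy).} I would then apply the variant of Lions' concentration-compactness lemma (Proposition \ref{prop:metric-concentration-compactness-lemma}) to the probability measures $\mu_n:=\ab{g_n}^2\d y$, adapted to the scaling symmetry rather than to translations. One of compactness, dichotomy, or vanishing holds along a subsequence. Vanishing is excluded by the anchored mass $\mu_n([1,2))\geq \eta^2/2$. To rule out dichotomy, write $g_n=g_n^{(1)}+g_n^{(2)}+h_n$ with disjointly supported $g_n^{(1)},g_n^{(2)}$ separated at scales diverging in the dyadic sense, $\norma{h_n}_{L^2}\to 0$, and $\norma{g_n^{(1)}}_{L^2}^2\to\alpha\in(0,1)$. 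The bilinear estimates (Corollary \ref{cor:bilinear-separated} in the scale-separated regime, and Corollary \ref{bilinear-separated-1} when the split occurs near the anchor scale $y=1$) cause every cross term in the expansion of $\norma{\mathcal{E}_p(g_n)}_{L^6}^6$ to vanish asymptotically. Combined with the Br\'ezis--Lieb type identity of Proposition \ref{prop:new-fvv} this yields
\[ {\bf E}_p^6+o(1)=\norma{\mathcal{E}_p(g_n^{(1)})}_{L^6}^6+\norma{\mathcal{E}_p(g_n^{(2)})}_{L^6}^6\leq {\bf E}_p^6\bigl(\alpha^3+(1-\alpha)^3\bigr), \]
contradicting $\alpha^3+(1-\alpha)^3<1$.

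\textbf{Stage 3 (Compactness alternative).} The compactness case produces (up to further rescaling by a factor $b_k\in(0,\infty)$) a tight subsequence in $L^2(\R)$. The anchored mass on $[1,2)$ forces $b_k$ to stay in a bounded region of $(0,\infty)$ unless the mass collapses toward a single point. If $b_k$ stays in a compact subset of $(0,\infty)$, then $g_k$ itself is tight along a further subsequence, so a weak $L^2$ limit $g$ exists; the already-excluded dichotomy together with a second use of Proposition \ref{prop:new-fvv} forces $\norma{g}_{L^2}=1$, upgrading the convergence to strong and giving alternative (i). Otherwise the mass collapses to a single point $y_0\in\R\setminus\{0\}$; replacing $a_k$ by $a_k/y_0$ moves $y_0$ to $1$, yielding alternative (ii) in the sense of Definition \ref{def:concentration-point}.

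\textbf{Main obstacle.} The delicate point is that translation is not a symmetry of \eqref{eq:SharpExtensionFormGenP}, so the concentration-compactness machinery must be re-built around scaling; this is why two different bilinear estimates are needed (one centered at the origin, one at $y=1$), and why the compactness case must be resolved in two sub-cases depending on whether the secondary scaling $b_k$ is bounded or not. Ensuring that the bilinear-estimate gain in the dichotomy step truly dominates in every scale regime, and simultaneously extracting the correct secondary scale in the compactness step, is the technical heart of the proof.
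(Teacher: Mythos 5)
Your Stages 1 and 2 track the paper's argument (scale selection via Corollary \ref{cor:dyadic-improvement}, then the dyadic concentration-compactness of Proposition \ref{prop:metric-concentration-compactness-lemma} with vanishing killed by the anchored mass and dichotomy killed by the bilinear estimates and strict superadditivity). The genuine gap is in Stage 3. Having excluded vanishing and dichotomy, you are left with \emph{compactness} (tightness), but tightness alone does not separate alternatives (i) and (ii): a sequence concentrating at a point in the sense of Definition \ref{def:concentration-point} is perfectly tight, has no runaway ``secondary scale $b_k$'', and converges weakly to $0$ in $L^2$. Your claim that ``the already-excluded dichotomy together with a second use of Proposition \ref{prop:new-fvv} forces $\norma{g}_{L^2}=1$'' fails precisely here: Proposition \ref{prop:new-fvv} has the hypothesis $f_n\weak f\neq 0$, and in the concentration scenario the weak limit \emph{is} zero, so nothing can be concluded from it. The dichotomy you propose (boundedness of $b_k$) is not the correct one; both alternatives occur with all translation/scaling parameters bounded.

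The paper resolves this with an ingredient your proposal never uses: the $L^1$ cap bound of Proposition \ref{prop:l1-cap-bound-p-power}. Since $\{g_n\}$ is extremizing, there are intervals $J_n=[s_n-r_n,s_n+r_n]$ inside dyadic blocks with $\int_{J_n}\ab{g_n}\geq c\ab{J_n}^{1/2}$, hence $\norma{g_n}_{L^2(J_n)}\geq c$ by Cauchy--Schwarz, and the uniform decay from Proposition \ref{prop:dyadic-localization} pins $\ab{s_n}\simeq 1$ so one may rescale to $s_n=1$. If $\liminf_n\ab{J_n}>0$, then \emph{nonnegativity} gives $\int_{1-2r^\ast}^{1+2r^\ast}g_n\gtrsim\sqrt{r^\ast}$, i.e.\ testing against a fixed indicator shows any weak limit is nonzero; only then does Proposition \ref{prop:new-fvv} apply and yield alternative (i). If instead $\ab{J_n}\to 0$, the lower bound $\norma{g_n}_{L^2(J_n)}\geq c$ feeds into Proposition \ref{prop:small-cap-implies-concentration} and yields alternative (ii). Without this $L^1$-mass-on-an-interval argument (or some substitute that certifies a nonzero weak limit), your Stage 3 does not close.
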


Theorem \ref{thm:Thm2} follows at once from Proposition \ref{prop:existence-vs-concentration} and the following result.

\begin{lemma}\label{lem:p-upper-bound-concentration}
	Let $p>1$.
	Given $y_0\in\R\setminus\{0\}$, let $\{f_n\}\subset L^2(\R)$ be a sequence concentrating at $y_0$. Then 
	\begin{equation}
	\label{eq:p-value_boundary}
	\limsup_{n\to\infty}\frac{\norma{f_n\sigma_p\ast f_n\sigma_p \ast f_n\sigma_p}_{L^2(\R^{2})}^2}{\norma{f_n}_{L^2(\R)}^6}\leq 
	\frac{2\pi}{\sqrt{3}\,p(p-1)}.
	\end{equation}
	If we set
	 $f_n(y)=e^{-n(\ab{y}^p-\ab{y_0}^p-py_0\ab{y_0}^{p-2}(y-y_0))}\ab{y}^{\frac{p-2}6}$,
	then the sequence 
	$\{f_n\norma{f_n}_{L^2}^{-1}\}$ concentrates at $y_0$, and equality holds in \eqref{eq:p-value_boundary}.
\end{lemma}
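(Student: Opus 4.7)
By the invariance of $\sigma_p$ under $y\mapsto -y$, we may assume $y_0>0$, so that $y_0|y_0|^{p-2}=y_0^{p-1}$. The central idea is \emph{local linearization}: since $\{f_n\}$ concentrates at $y_0$, the three-fold convolution is governed by the shape of the curve $s=y^p$ in a shrinking neighborhood of $y_0$. Taylor expansion gives
\[ y^p = y_0^p + py_0^{p-1}(y-y_0) + \tfrac{p(p-1)}{2}\,y_0^{p-2}(y-y_0)^2 + O((y-y_0)^3), \]
so near $y_0$ the curve is an osculating parabola of curvature $p(p-1)y_0^{p-2}$, and the weight $|y|^{(p-2)/6}$ is essentially the constant $y_0^{(p-2)/6}$ on the relevant support. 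After an affine change of variables the osculating parabola becomes the standard parabola $s=w^2$, for which the sharp three-fold convolution constant is $\mathbf{C}_2^6=\pi/\sqrt{3}$, attained by Gaussians (Foschi \cite{Fo07}, Hundertmark--Zharnitski \cite{HZ06}). The curvature factor $p(p-1)y_0^{p-2}$ and the weight factor $y_0^{(p-2)/6}$ balance so that their net $y_0$-dependence cancels, and the rescaling to $s=w^2$ contributes the factor $2/[p(p-1)]$, producing the stated bound $\tfrac{2\pi}{\sqrt{3}\,p(p-1)}$.

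\textbf{Upper bound.} Fix $\rho>0$ small, and decompose $f_n=f_n^{\mathrm{loc}}+f_n^{\mathrm{far}}$ with $f_n^{\mathrm{loc}}:=f_n\one_{(y_0-\rho,y_0+\rho)}$; concentration yields $\|f_n^{\mathrm{far}}\|_{L^2}\to 0$. H\"older's inequality in the Fourier picture together with \eqref{eq:SharpExtensionFormGenP} yields the trilinear estimate
\[ \|f_1\sigma_p\ast f_2\sigma_p\ast f_3\sigma_p\|_{L^2(\R^2)}\lesssim \|f_1\|_{L^2}\|f_2\|_{L^2}\|f_3\|_{L^2}, \]
so every cross term in the trilinear expansion of $\|f_n\sigma_p\ast f_n\sigma_p\ast f_n\sigma_p\|_{L^2}^2$ that involves $f_n^{\mathrm{far}}$ vanishes as $n\to\infty$. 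On the main piece $(f_n^{\mathrm{loc}}\sigma_p)^{\ast 3}$, apply the explicit convolution formula \eqref{eq:altFormulaConv} iteratively together with the Taylor expansion above; modulo $O(\rho)$ errors the resulting integral matches the three-fold convolution associated to the osculating parabola. Keeping careful track of the Jacobian \eqref{eq:jacobian} and the weight $|y|^{(p-2)/6}$ yields the bound $\tfrac{2\pi}{\sqrt{3}\,p(p-1)}$; letting $\rho\to 0$ along a suitable diagonal subsequence delivers \eqref{eq:p-value_boundary}.

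\textbf{Equality.} Set $\Phi(y) := |y|^p - y_0^p - py_0^{p-1}(y-y_0)$. Strict convexity of $y\mapsto |y|^p$ for $p>1$ gives $\Phi\geq 0$ with unique minimum at $y_0$, and locally $\Phi(y)=\tfrac{p(p-1)}{2}y_0^{p-2}(y-y_0)^2+O((y-y_0)^3)$. Hence $f_n(y)=e^{-n\Phi(y)}|y|^{(p-2)/6}$ is a Gaussian-type profile peaking at $y_0$ on scale $n^{-1/2}$, and $\{f_n/\|f_n\|_{L^2}\}$ concentrates at $y_0$ per Definition \ref{def:concentration-point}. After the substitution $z=\sqrt{n}(y-y_0)$ and $L^2$-normalization, $f_n$ converges to a centered Gaussian in $z$ on the osculating parabola, and the upper-bound argument becomes an \emph{equality} in the limit, because Gaussians extremize the 1D parabola Strichartz inequality. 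Thus the bound $\tfrac{2\pi}{\sqrt{3}\,p(p-1)}$ is saturated by this sequence.

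\textbf{Main obstacle.} The principal technical difficulty is to justify the local reduction rigorously: one must control the $O((y-y_0)^3)$ error in the Taylor expansion uniformly across the triple-integration domain, and verify that the Jacobian \eqref{eq:jacobian} converges to its parabolic limit uniformly on compact sets after rescaling. The bilinear estimates of \S\ref{sec:Prelims}, in particular Corollary \ref{bilinear-separated-1}, supply the quantitative tools to discard non-local interactions, while the localization $\rho\to 0$ (coupled with a diagonalization in $n$) handles the local piece.
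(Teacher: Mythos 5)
Your strategy --- localize near $y_0$, discard cross terms via the trilinear bound, and compare the local piece to the osculating parabola --- is a genuinely different route from the paper's. The paper does not rescale to the parabola at all: it imports from the companion paper \cite{OSQ18} (via Proposition \ref{prop:convPowers}) the fact that $w\nu_p\ast w\nu_p\ast w\nu_p$ extends continuously to the boundary of its support away from the origin with boundary value \eqref{eq:BoundaryValues3fold}, and then runs the Cauchy--Schwarz argument of \cite[Lemmata 4.1 and 4.2]{OSQ16}: concentration at $y_0$ forces the support of the convolution to shrink to the corner $(3y_0,3|y_0|^p)$, where the supremum of the convolution density tends to its boundary value $\tfrac{2\pi}{\sqrt{3}\,p(p-1)}$; equality for the exhibited sequence follows because $f_n\sigma_p\ast f_n\sigma_p\ast f_n\sigma_p=e^{-n(\tau+a\xi)}\,(w\nu_p\ast w\nu_p\ast w\nu_p)$ exactly (cf.\ Lemma \ref{lem:lowerBoundExp}), with the exponential weight localizing at that same corner. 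Your constant bookkeeping is correct: the curvature $p(p-1)|y_0|^{p-2}$ and the weight factor $|y_0|^{(p-2)/3}$ do cancel against the parabola constant $\pi/\sqrt3$ of \cite{Fo07}, and your route has the minor advantage of treating $1<p<2$ on the same footing as $p\ge 2$ (the paper needs a separate reduction to remove the singular weight), besides being closer in spirit to the profile-decomposition viewpoint of \cite{JPS10}.

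That said, there is a genuine gap exactly where you flag the ``main obstacle'': the claim that ``modulo $O(\rho)$ errors the resulting integral matches the three-fold convolution associated to the osculating parabola'' is the entire content of the lemma, and nothing in the proposal establishes it. The difficulty is not the Taylor remainder per se, but that near the corner $(3y_0,3|y_0|^p)$ the convolution density is an integral of $1/|J^{-1}|$ over a region where the Jacobian \eqref{eq:jacobian} degenerates (the frequency variables collide), so a pointwise $O((y-y_0)^3)$ error in the phase does not automatically yield an $O(\rho)$ error in the nearly singular convolution integral; one must prove that the boundary limit of the density is stable under this $C^2$ perturbation of the curve. The bilinear estimates of \S\ref{sec:Prelims} do not help here --- they control separated interactions, whereas this is precisely the self-interaction of a single shrinking cap. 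Similarly, for the equality statement, $L^2$ convergence of the rescaled profiles to a Gaussian does not by itself give convergence of the ratio $\Phi_p(f_n)$ unless one also shows that the rescaled extension (or convolution) operators converge to those of the parabola. Both points are fixable --- either by a dominated-convergence argument on the rescaled convolution integral, or by invoking the explicit boundary value \eqref{eq:BoundaryValues3fold} and its continuity as the paper does --- but as written they are asserted rather than proved.
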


\noindent Convolution of singular measures is treated in much greater generality in the companion paper \cite{OSQ18}.
Lemma \ref{lem:p-upper-bound-concentration} is almost contained in \cite{OSQ16, OSQ18}, and we just indicate the necessary changes.

\begin{proof}[Proof sketch of Lemma \ref{lem:p-upper-bound-concentration}]
Once the boundary value for $\ab{\cdot}^{\frac{p-2}6}\sigma_p\ast \ab{\cdot}^{\frac{p-2}6}\sigma_p\ast \ab{\cdot}^{\frac{p-2}6}\sigma_p$ given in  \eqref{eq:BoundaryValues3fold} below is known to equal the right-hand side of \eqref{eq:p-value_boundary},
the proof for $p\geq 2$ follows the exact same lines as that of \cite[Lemmata 4.1 and 4.2]{OSQ16}. We omit the details.

If $1<p<2$, then the function $|\cdot|^{\frac{p-2}{6}}$ fails to be continuous at the origin, and an additional argument is needed.
We show how to reduce matters to the analysis of projection measure.
Let  $\{f_n\}\subset L^2(\R)$ concentrate at $y_0\neq 0$. Then
\begin{equation}\label{removing-weight}
	\limsup_{n\to\infty}\frac{\norma{f_n\sigma_p\ast 
			f_n\sigma_p \ast f_n\sigma_p}_{L^2}^2}{\norma{f_n}_{L^2}^6}
	=\ab{y_0}^{p-2}\limsup_{n\to\infty}\frac{\norma{f_n\nu_p\ast 
			f_n\nu_p \ast f_n\nu_p}_{L^2}^2}{\norma{f_n}_{L^2}^6},
\end{equation}
where $\nu_p$ denotes the projection measure $\d\nu_p=\ddirac{s-|y|^p}\d y\d s$.
To verify \eqref{removing-weight}, consider the interval $J:=[{y_0}/{2},{3y_0}/{2}]$. Then
\begin{align*}
\limsup_{n\to\infty}\frac{\norma{f_n\sigma_p\ast f_n\sigma_p \ast f_n\sigma_p}_{L^2}^2}{\norma{f_n}_{L^2}^6}&
=\limsup_{n\to\infty}\frac{\norma{f_n\one_{J}\sigma_p\ast f_n\one_{J}\sigma_p \ast f_n\one_{J}\sigma_p}_{L^2}^2}{\norma{f_n\one_{J}}_{L^2}^6}\\
&=\ab{y_0}^{p-2}\limsup_{n\to\infty}\frac{\norma{f_n\nu_p\ast f_n\nu_p \ast f_n\nu_p}_{L^2}^2}{\norma{f_n}_{L^2}^6}.
\end{align*}
Here, to justify the first equality,  invoke the continuity of the operator $\mathcal E_p$, and the fact that the sequence $\{f_n\}$ concentrates at $y_0$. 
For the second equality, additionally note that
\[\frac{\norma{f_n\one_{J}\ab{\cdot}^{\frac{p-2}6}-f_n\one_{J}\ab{y_0}^{\frac{p-2}6}}_{L^2}}{\norma{f_n\one_{J}}_{L^2}}\to0,\quad\text{ as }n\to\infty.\]
From \cite[Proposition 2.1]{OSQ18}, the measure $\nu_p\ast\nu_p\ast\nu_p$ defines a continuous function in the 
interior of its support, with continuous extension to the boundary except at  $(0,0)$. Moreover, for any $y_0\neq 0$,
\[ 
(\nu_p\ast\nu_p\ast\nu_p)(3y_0,3\ab{y_0}^p)=\frac{2\pi}{\sqrt{3}\,p(p-1)\ab{y_0}^{p-2}}.
\]
The result now follows as in \cite[Lemmata 4.1 and 4.2]{OSQ16}.
\end{proof}

The proof of Proposition \ref{prop:existence-vs-concentration} relies on the bilinear extension estimates and cap bounds from \S \ref{sec:Prelims}, together with a suitable variant of Lions' concentration-compactness lemma, which is formulated in the appendix as Proposition \ref{prop:metric-concentration-compactness-lemma}.
This has two important consequences for the present context, the first of which is the following.

\begin{proposition}\label{prop:small-cap-implies-concentration}
	Let $\{f_n\}\subset L^2(\R)$ be an $L^2$--normalized extremizing sequence for \eqref{eq:SharpExtensionFormGenP}. Let $\{r_n\}$ be a sequence of nonnegative numbers, satisfying $r_n\to0$, as $n\to\infty$, and
	\[ \inf_{n\in\N}\int_{1-r_n}^{1+r_n}\ab{f_n(y)}^2\d y> 0. \]
	Then the sequence $\{f_n\}$ concentrates at $y_0=1$.
\end{proposition}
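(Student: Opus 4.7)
The plan is to argue by contradiction. Suppose $\{f_n\}$ fails to concentrate at $y_0=1$; after passing to a subsequence, there exist $\eps_0,\rho_0>0$ with $\int_{|y-1|\geq\rho_0}|f_n|^2\,\d y\geq\eps_0$ for every $n$. Combining this with the hypothesis $\inf_n\int_{|y-1|\leq r_n}|f_n|^2\,\d y=:\eta_0>0$, the sequence $f_n$ simultaneously carries a \emph{core} of mass $\geq\eta_0$ on the shrinking set $\{|y-1|\leq r_n\}$ and a \emph{tail} of mass $\geq\eps_0$ on the fixed far region $\{|y-1|\geq\rho_0\}$. The strategy is to decouple these two pieces through a bilinear estimate, show that the resulting cross contributions to $\|\mathcal{E}_p(f_n)\|_{L^6}^6$ are negligible, and then invoke the strict subadditivity $a^3+c^3<(a+c)^3$ for $a,c>0$ to contradict the extremizing property.

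The main obstacle is the mass in the intermediate annulus $\{r_n<|y-1|<\rho_0\}$, which is a priori uncontrolled and would otherwise destroy the scale separation between core and tail. I would overcome this with a pigeonhole argument: for any prescribed integer $M\geq 1$, I partition the intermediate annulus into $J_n\sim\log_2(\rho_0/r_n)/M\to\infty$ disjoint dyadic shells of scale-thickness $M$; since $\|f_n\|_{L^2}=1$, at least one shell carries $L^2$-mass $\leq 1/J_n\to 0$. Using this shell as a buffer, split $f_n=g_n+m_n+h_n$ with disjoint supports so that $g_n$ captures the core ($\|g_n\|_{L^2}^2\geq\eta_0$), $h_n$ captures the tail ($\|h_n\|_{L^2}^2\geq\eps_0$), and $\|m_n\|_{L^2}\to 0$. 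By construction, $g_n$ is supported in $\{|y-1|\leq 2^{k'_n}\}$ and $h_n$ in $\{|y-1|\geq 2^{k'_n+M}\}$ for some $k'_n\in\Z$, so Corollary \ref{bilinear-separated-1} yields
\[\|\mathcal{E}_p(g_n)\mathcal{E}_p(h_n)\|_{L^3(\R^2)}\lesssim_p 2^{-\beta M}\|g_n\|_{L^2}\|h_n\|_{L^2},\qquad\beta=\min\{\tfrac16,\tfrac{p-1}6\}.\]

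Finally I would expand $\|\mathcal{E}_p(f_n)\|_{L^6}^6\leq\int(|\mathcal{E}_p(g_n)|+|\mathcal{E}_p(m_n)|+|\mathcal{E}_p(h_n)|)^6$ via the multinomial theorem. The three diagonal contributions total at most $\mathbf{E}_p^6(a_n^3+b_n^3+c_n^3)$, where $a_n,b_n,c_n$ denote the squared $L^2$-masses of $g_n,m_n,h_n$; since $b_n\to 0$, $a_n+c_n\to 1$, and $a_nc_n\geq\eta_0\eps_0$, the identity $a^3+c^3=(a+c)^3-3ac(a+c)$ produces $a_n^3+c_n^3\leq 1-3\eta_0\eps_0+o(1)$. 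Every mixed term involving $\mathcal{E}_p(m_n)$ carries a factor $\|m_n\|_{L^2}^b$ with $b\geq 1$ (after bounding the remaining $L^6$-norms by $\mathbf{E}_p$), and so vanishes in the limit. The remaining $(g_n,h_n)$-only cross terms $\int|\mathcal{E}_p(g_n)|^a|\mathcal{E}_p(h_n)|^{6-a}$, $1\leq a\leq 5$, are each bounded by H\"older's inequality by a product of $\|\mathcal{E}_p(g_n)\|_{L^6}$, $\|\mathcal{E}_p(h_n)\|_{L^6}$ (controlled by $\mathbf{E}_p$) and a positive power of $\|\mathcal{E}_p(g_n)\mathcal{E}_p(h_n)\|_{L^3}\lesssim 2^{-\beta M}$. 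Passing to $\limsup_{n\to\infty}$ and then letting $M\to\infty$, I obtain $\mathbf{E}_p^6\leq\mathbf{E}_p^6(1-3\eta_0\eps_0)$, the desired contradiction. The main technical step is the pigeonhole selection of the buffer shell; after that, everything reduces to the strict subadditivity of $t\mapsto t^3$ on $[0,\infty)$.
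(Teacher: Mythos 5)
Your argument is correct, and it takes a genuinely different route from the paper. The paper deduces the proposition from the abstract concentration--compactness lemma of the appendix (Proposition \ref{prop:metric-concentration-compactness-lemma}), applied with the dyadic pseudometric $\vrho(x,y)=\ab{k-k'}$ centered at $1$: a trichotomy (compactness/vanishing/dichotomy) is extracted, dichotomy is excluded via Corollary \ref{bilinear-separated-1} and the strict superadditivity of $t\mapsto t^3$, and in the remaining two cases the hypothesis $\inf_n\int_{J_n}\ab{f_n}^2>0$ is used to pin the concentration center at $y_0=1$. You instead run a direct contradiction: the negation of concentration produces a core/tail mass configuration, and your pigeonhole selection of a low-mass dyadic buffer shell of thickness $M$ in the intermediate annulus manufactures, by hand, exactly the separated decomposition that the dichotomy case of Lions' lemma would otherwise provide; the rest (multinomial expansion, H\"older with the bilinear decay $2^{-\beta M}$ from Corollary \ref{bilinear-separated-1}, and $a^3+c^3=(a+c)^3-3ac(a+c)$) matches the computation in \eqref{dichotomy-argument}. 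The two key external inputs are therefore identical; what differs is the compactness bookkeeping. Your version is more elementary and self-contained (no appeal to the appendix) and is quantitative -- it shows that a unit-mass function with mass $\geq\eta_0$ on $[1-r_n,1+r_n]$ and $\geq\eps_0$ on $\{\ab{y-1}\geq\rho_0\}$ has $\norma{\mathcal{E}_p(f_n)}_{L^6}^6\leq {\bf E}_p^6(1-3\eta_0\eps_0)+o(1)$ -- whereas the paper's machinery is modular and is reused verbatim for Proposition \ref{prop:dyadic-localization} and for the odd-curve analogues, which your ad hoc construction would have to be adapted to separately. The only points worth tightening in a write-up are routine: the intermediate annulus need only \emph{contain} $J_n\to\infty$ disjoint shells of thickness $M$ (it need not be exactly partitioned by them), and the contradiction is obtained along the extracted subsequence, which is legitimate since subsequences of extremizing sequences are extremizing.
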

\begin{proof}
Consider the intervals $J_n:=[1-r_n,1+r_n]$, $n\in\N$, and define the pseudometric
	\begin{equation}\label{eq:DefPseudoMetric}
	\vrho\colon\R\setminus\{1\}\times\R\setminus\{1\}\to[0,\infty),\qquad \vrho(x,y):=\ab{k-k'}, 
	\end{equation}
	where $k,k'$ are such that $\ab{x-1}\in[2^{k},2^{k+1})$ and $\ab{y-1}\in[2^{k'},2^{k'+1})$. Let $R$ be an integer.
	Then the ball centered at $x\neq 1$ of radius $R$ defined by $\vrho$ is given by 
	$$B(x,R)=\{y\in\R\setminus\{1\}\colon 2^{k-R}\leq \ab{y-1}< 2^{k+R+1}\}.$$ 
		Let $\{f_n\}$ be as in the statement of the proposition. 
		Apply Proposition \ref{prop:metric-concentration-compactness-lemma} to the 
	sequence $\{\ab{f_n}^2\}$ with $X=\R$ equipped with Lebesgue measure, $\bar{x}=1$, the function $\vrho$ defined as in \eqref{eq:DefPseudoMetric}, and $\lambda=1$. 
	Passing to a subsequence, also denoted by $\{|f_n|^2\}$, one of three cases arises.

	{\bf Case 1.} The sequence $\{\ab{f_n}^2\}$ satisfies {\it compactness}.
	In this case, 
	there exists 
	$\{x_n\}\subset\R\setminus\{1\}$ with the property that for any $\eps>0$, there exists 
	$R<\infty$ such that, for every $n\geq 1$,
	\begin{equation}\label{eq:case-concentration}
	\int_{B(x_n,R)}\ab{f_n}^2\geq 1-\eps. 
	\end{equation}
	Suppose that $\limsup_{n\to\infty}\ab{x_n-1}>0$. Then, possibly after extraction of a subsequence,  $\{x_n\}$ is eventually far 
	from $1$, i.e. there exist $N_0\in\N$, $\ell^\ast\in\Z$
	such that $\ab{x_n-1}>2^{\ell^\ast}$, for every  $n\geq N_0$.
	Let 
	$\eps:=\frac{1}{2}\inf_n\norma{f_n}_{L^2(J_n)}^2>0$, and choose an integer 
	$R$ such that \eqref{eq:case-concentration} holds.
	Now, 
	$$B(x_n,R)=\{y\in\R\setminus\{1\}\colon 2^{k_n-R}\leq\ab{y-1}< 2^{k_n+R+1}\},$$
	where $k_n$ is such that $\ab{x_n-1}\in[2^{k_n},2^{k_n+1})$, and hence
	$B(x_n,R)\subseteq \{y\neq 1\colon 
	\ab{y-1}\geq 2^{\ell^\ast-R}\}$.
	Let $N_1\geq N_0$ be  such that $r_n< 2^{\ell^\ast-R}$, for every $n\geq N_1$. In this case, we have $J_n\cap B(x_n,R)=\emptyset$, 
	which is impossible because our choice of $\eps$ would then force
	\[ 1=\int_{\R}\ab{f_n}^2\geq  \int_{J_n}\ab{f_n}^2+\int_{B(x_n,R)}\ab{f_n}^2>1.\]
	It follows that $x_n\to 1$, as $n\to\infty$, and consequently the sequence 
	$\{f_n\}$ concentrates at $y_0=1$. 
	Indeed, given $\eps>0$, choose an integer $R$ such that
	\eqref{eq:case-concentration} holds. 
	Then $B(x_n,R)\subseteq [1-2^{k_n+R+1},1+2^{k_n+R+1}]\setminus\{1\}$, where $\ab{x_n-1}\in[2^{k_n},2^{k_n+1})$ and 
	$k_n\to-\infty$, as $n\to\infty$, so that $2^{k_n+R+1}\to 0$, as $n\to\infty$. This forces
	\[ \int_{1-2^{k_n+R+1}}^{1+2^{k_n+R+1}}\ab{f_n(y)}^2\d y\geq 1-\eps, \]
	for every $n\geq 1$,
	which implies concentration of the sequence $\{f_n\}$ at $y_0=1$. 
	
		{\bf Case 2.} The sequence $\{\ab{f_n}^2\}$ satisfies {\it dichotomy}.
 Let $\alpha\in(0,1)$ be as in the dichotomy condition. Given $\eps>0$, consider the corresponding data
	$R,\,k_0,\rho_{n,j}=|f_{n,j}|^2,j\in\{1,2\},\{x_n\}\subset\R\setminus\{1\},\{R_n\}\subset[0,\infty)$. 
	In particular,
	\[ \supp(f_{n,1})\subset B(x_n,R),\text{ and } \supp(f_{n,2})\subset B(x_n,R_n)^\complement.\]
	Since $R_n-R\to\infty$, as $n\to\infty$, by Corollary \ref{bilinear-separated-1} we obtain
	\begin{equation}\label{eq:weak-dichotomy}
	\norma{\mathcal{E}_p(f_{n,1})\mathcal{E}_p(f_{n,2})}_{L^3}\leq C_n\norma{f_{n,1}}_{L^2}\norma{f_{n,2}}_{L^2},
	\end{equation}
	where $C_n=C_n(\eps)\lesssim 2^{-\beta(R_n-R)}$, for some $\beta>0$.
	In particular, given $\eps>0$, we have that $C_n\to 0$, as $n\to\infty$. 
	Aiming at a contradiction, consider that
	\begin{equation}\label{eq:IneqConstant1}
	 \norma{\mathcal{E}_p(f_n-f_{n,1}-f_{n,2})}_{L^6}\leq {\bf E}_p\norma{f_n-(f_{n,1}+f_{n,2})}_{L^2}\leq 
	{\bf E}_p\eps^{\frac12}, 
	\end{equation}
	The latter inequality requires a short justification which boils down to the pointwise estimate
	\begin{equation}\label{eq:IneqConstant2}
          (\ab{f_n}-(\ab{f_{n,1}}+\ab{f_{n,2}}))^2\leq \ab{\ab{f_n}^2-(\ab{f_{n,1}}+\ab{f_{n,2}})^2}
          =\ab{\ab{f_n}^2-(\ab{f_{n,1}}^2+\ab{f_{n,2}}^2)}.
        \end{equation}
		This, in turn, follows from the disjointness of the supports of $f_{n,1}$ and $f_{n,2}$, together with the trivial estimate 
		$\ab{\ab{f_n}-(\ab{f_{n,1}}+\ab{f_{n,2}})}\leq {\ab{f_n}+(\ab{f_{n,1}}+\ab{f_{n,2}})}$. 
		In this way, \eqref{eq:IneqConstant2} and Proposition \ref{prop:metric-concentration-compactness-lemma} imply
		\begin{equation*}
		\norma{(\ab{f_n}-(\ab{f_{n,1}}+\ab{f_{n,2}}))^2}_{L^1}\leq \norma{\ab{f_n}^2-(\ab{f_{n,1}}^2+\ab{f_{n,2}}^2)}_{L^1}\leq \eps.
		\end{equation*}
	Coming back to \eqref{eq:IneqConstant1}, we have as an immediate consequence that 
$$\norma{\mathcal{E}_p(f_n)}_{L^6}\leq {\bf E}_p\eps^{\frac12}+\norma{\mathcal{E}_p(f_{n,1}+f_{n,2})}_{L^6}.$$
	Expanding the binomial, using  $\norma{f_{n,1}}_{L^2},\norma{f_{n,2}}_{L^2}\leq 1$, and H\"older's 
	inequality together with \eqref{eq:weak-dichotomy}, we find 
	that there exists $c$ independent of $n$ such that, for sufficiently large $n$,
	\begin{equation}\label{dichotomy-argument}
	\begin{split}
	\norma{\mathcal{E}_p(f_{n,1}+f_{n,2})}_{L^6}^6&\leq 
	\norma{\mathcal{E}_p(f_{n,1})}_{L^6}^6+\norma{\mathcal{E}_p(f_{n,2})}_{L^6}^6+cC_n\\
	&\leq {\bf E}_p^6(\norma{f_{n,1}}_{L^2}^6+\norma{f_{n,2}}_{L^2}^6)
	+cC_n\\
	&\leq {\bf E}_p^6((\alpha+\eps)^3+(1-\alpha+\eps)^3)+cC_n.
	\end{split}
	\end{equation}
	This implies, for every sufficiently large $n$,
	\[ \norma{\mathcal{E}_p(f_n)}_{L^6}\leq 
	{\bf E}_p\eps^{\frac12}+({\bf E}_p^6((\alpha+\eps)^3+(1-\alpha+\eps)^3)+cC_n)^{\frac16}.\]
	Taking $n\to\infty$, and recalling that $\{f_n\}$ is an $L^2$-normalized extremizing sequence for \eqref{eq:SharpExtensionFormGenP}, we find that 
		\[ {\bf E}_p\leq{\bf E}_p\eps^{\frac12}+{\bf E}_p((\alpha+\eps)^3+(1-\alpha+\eps)^3)^{\frac16},  \]
		for every $\eps>0$. Taking $\eps\to 0$ yields $1\leq\alpha^3+(1-\alpha)^3$, 
	which is impossible since $\alpha\in(0,1)$. Hence dichotomy does not arise.
	
	{\bf Case 3.} The sequence $\{|f_n|^2\}$ satisfies {\it vanishing}.
	In this case, 
	\[ \lim_{n\to\infty}\sup_{k\in\Z}\int_{2^{k-R}\leq\ab{y-1}\leq 2^{k+R+1}}\ab{f_n(y)}^2\d y=0, \]
	for every integer $R<\infty$.
	In particular, for  fixed $k\in \N$, we have
	\begin{equation}\label{eq:SmallOutside2k}
	 \lim_{n\to\infty}\int_{2^{-k}\leq\ab{y-1}\leq 2^k}\ab{f_n(y)}^2\d y=0. \
	 \end{equation}	
	 Set $f_{n,1}:=f_n\one_{[1-2^{-k},1+2^{-k}]}$ and $f_{n,2}:=f_n\one_{\{\ab{y-1}\geq  2^k\}}$. Since 
	$\norma{f_n-f_{n,1}-f_{n,2}}_{L^2}\to 0$, as $n\to\infty$, it follows that $\{f_{n,1}+f_{n,2}\}_n$ is also an 
	extremizing sequence for \eqref{eq:SharpExtensionFormGenP}, for each $k\in \N$. This new sequence splits the mass into two separated regions, and so we expect to reach a contradiction if 
	$\limsup_{n\to\infty}\norma{f_{n,2}}_{L^2}>0$, just as in Case 2. 
	Set $\alpha_k:=\limsup_{n\to\infty}\norma{f_{n,2}}_{L^2}^2$ (recall that $f_{n,2}$ depends on $k$), and note that  $\{\alpha_k\}$ is a constant sequence. Indeed, 
	\begin{equation}\label{eq:alpha-constant}
	\int_{\ab{y-1}\geq 2^k}\ab{f_n(y)}^2\d y=\int_{\ab{y-1}\geq 2^{k+1}}\ab{f_n(y)}^2\d 
	y+\int_{2^k\leq\ab{y-1}\leq 2^{k+1}}\ab{f_n(y)}^2\d y
	\end{equation}
	and from \eqref{eq:SmallOutside2k} with $k+1$ instead of $k$ we have
	\[ \lim_{n\to\infty}\int_{2^k\leq\ab{y-1}\leq 2^{k+1}}\ab{f_n(y)}^2\d y=0.\]
	Taking $\limsup_{n\to\infty}$ in \eqref{eq:alpha-constant} yields $\alpha_{k+1}=\alpha_k$, for every $k\in\N$. 
	An argument analogous to that of Case 2 (starting from \eqref{dichotomy-argument}) shows that
	there exist $\beta>0$ and a sequence $\{C_k\}$, $0\leq C_k\lesssim 2^{-\beta k}\to 0$, as $k\to\infty$, such that	
		\[ 1\leq \alpha_k^3+(1-\alpha_k)^3+C_k, \; \text{ for every } k\in\N.\]
	Since $\alpha_k\equiv \alpha$ is constant, we may take $k\to\infty$ 
	in the previous inequality and obtain
	$1\leq\alpha^3+(1-\alpha)^3.$
	Since $\alpha\in [0,1]$,  necessarily $\alpha\in\{0,1\}$. We claim that $\alpha=0$. 
	For any $k\geq 1$, the support of $f_{n,2}$ is disjoint 
	from the interval $J_n$ if $n$ large enough. Thus
	\[ \norma{f_{n,2}}_{L^2}^2\leq 1-\int_{J_n}\ab{f_n}^2\leq 1-\inf_{n\in\N}\int_{J_n}\ab{f_n}^2, \]
	and therefore
	\[ \alpha\leq 1-\inf_{n\in\N}\int_{J_n}\ab{f_n}^2<1. \]
	We conclude that $\alpha=0$, as claimed.
	Finally, we show that vanishing implies concentration at $y=1$. Since
	\[ 
	1=\norma{f_n}_{L^2}^2=\norma{f_{n,1}}_{L^2}^2+\norma{f_{n,2}}_{L^2}^2+o_n(1)=\norma{f_{n,1}}_{L^2}^2+
	o_n(1)=\norma{f_n\one_{[1-2^{-k},1+2^{-k}]}}_{L^2}^2+o_n(1), \]
	we find that, for every $k\in\N$,
	\[ \lim_{n\to\infty}\int_{1-2^{-k}}^{1+2^{-k}}\ab{f_n(y)}^2\d y=1.\]
	This implies that the sequence $\{f_n\}$ concentrates at $y_0=1$.
	
	To sum up, we proved that any sequence $\{f_n\}$ as in the statement of the proposition does not 
	satisfy dichotomy; and that if it satisfies compactness or vanishing, then it concentrates at $y_0=1$. 
Thus the proof is complete. 
 \end{proof}

As a second application of Proposition \ref{prop:metric-concentration-compactness-lemma},
we prove dyadic localization of extremizing sequences, after rescaling. We take $X=\R$, $\bar{x}=0$, and use the dyadic pseudometric 
\begin{equation}\label{eq:pseudometric-center-zero}
 \vrho\colon\R\setminus\{0\}\times\R\setminus\{0\}\to[0,\infty),\quad \vrho(x,y):=\ab{k-k'},
\end{equation}
where this time $\ab{x}\in[2^{k},2^{k+1})$ and $\ab{y}\in [2^{k'},2^{k'+1})$. In this case, if $R$ is an integer, then 
$$B(x,R)=\{y\in\R\setminus\{0\}\colon 2^{k-R}\leq\ab{y}< 2^{k+R+1}\}.$$

\begin{proposition}\label{prop:dyadic-localization}
Let $\{f_n\}\subset L^2(\R)$ be an $L^2$-normalized extremizing sequence for \eqref{eq:SharpExtensionFormGenP}. Then there exist a subsequence $\{f_{n_k}\}$, a sequence $\{a_k\}\subset\R\setminus\{0\}$, and a function $\Theta:[1,\infty)\to(0,\infty)$, $\Theta(R)\to0$, as $R\to\infty$, such that the rescaled sequence $\{g_k\}$, $g_k:={\ab{a_k}^{1/2}}f_{n_k}(a_k\cdot)$, satisfies
\begin{equation}\label{eq:uniform-l2-decay}
\norma{g_k}_{L^2([-R,R]^\complement)}\leq \Theta(R), \text{ for every }  k\geq 1 \text{ and } R\geq 1.
\end{equation}
\end{proposition}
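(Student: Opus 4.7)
The plan is to apply the metric concentration-compactness lemma (Proposition \ref{prop:metric-concentration-compactness-lemma}) to the sequence $\{|f_n|^2\}$ on $X = \R$ equipped with Lebesgue measure, reference point $\bar x = 0$, pseudometric $\vrho$ defined in \eqref{eq:pseudometric-center-zero}, and mass $\lambda=1$. After passing to a subsequence, one of compactness, dichotomy, or vanishing must occur; the strategy is to rule out dichotomy and vanishing, and to exploit scale invariance to normalize the compactness profile so as to obtain \eqref{eq:uniform-l2-decay}.

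To rule out \emph{vanishing}, observe that the balls of radius $R=1$ for $\vrho$ are unions of three consecutive dyadic annuli $I_{k-1}^\bullet \cup I_k^\bullet \cup I_{k+1}^\bullet$. Hence vanishing forces $\sup_{k\in\Z}\norma{f_n\one_{I_k^\bullet}}_{L^2}\to 0$ as $n\to\infty$, and then Corollary \ref{cor:dyadic-improvement} yields $\norma{\mathcal{E}_p(f_n)}_{L^6}\to 0$, contradicting the extremizing property. To rule out \emph{dichotomy}, one mimics Case 2 of the proof of Proposition \ref{prop:small-cap-implies-concentration}: the separation between the supports of $f_{n,1}$ (contained in a $\vrho$-ball $B(x_n,R)$) and $f_{n,2}$ (supported in $B(x_n,R_n)^\complement$) is measured in dyadic scales around the origin, and therefore Corollary \ref{cor:bilinear-separated} applies directly (rather than its translated variant Corollary \ref{bilinear-separated-1}) to produce a bilinear factor $C_n \lesssim 2^{-\beta(R_n-R)}\to 0$. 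The same binomial expansion as in \eqref{dichotomy-argument}, followed by $n\to\infty$ and then $\eps\to 0$, leads to $1\leq \alpha^3+(1-\alpha)^3$ for some $\alpha\in(0,1)$, which is impossible.

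It remains to analyze \emph{compactness}. There exists $\{x_n\}\subset\R\setminus\{0\}$ such that for every $\eps>0$ one can find $R=R(\eps)<\infty$ with
\[\int_{B(x_n,R)}\ab{f_n}^2\geq 1-\eps,\quad\text{for all }n\geq 1.\]
Let $k_n\in\Z$ be defined by $\ab{x_n}\in [2^{k_n},2^{k_n+1})$ and set $a_n := 2^{k_n}\sign(x_n)$. Consider the rescaled sequence $g_n(y) := \ab{a_n}^{1/2} f_n(a_n y)$, which is again an $L^2$-normalized extremizing sequence by the scale invariance noted at the beginning of \S \ref{sec:Dichotomy}. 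Under this rescaling, $B(x_n,R)$ is mapped into $\{y\in\R\setminus\{0\}:2^{-R}\leq \ab{y}<2^{R+2}\}$, independently of $n$. Hence
\[\int_{\ab{y}\geq 2^{R+2}} \ab{g_n(y)}^2\, \d y\leq \eps, \quad\text{for all } n\geq 1.\]
Setting $\Theta(R) := \sup_{n\geq 1}\norma{g_n}_{L^2([-R,R]^\complement)}$ then yields a function with $\Theta(R)\to 0$ as $R\to\infty$, which proves \eqref{eq:uniform-l2-decay} along the relevant subsequence.

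The main obstacle is the verification of the dichotomy exclusion, as one must carefully check that the constant in \eqref{eq:weak-dichotomy}, together with the binomial error in \eqref{dichotomy-argument}, still decays sufficiently fast when the centers $\{x_n\}$ range over all of $\R\setminus\{0\}$; this is handled by the fact that Corollary \ref{cor:bilinear-separated} provides a scale-invariant bilinear bound depending only on the gap of dyadic exponents, matching exactly the geometry of $\vrho$ in \eqref{eq:pseudometric-center-zero}. The remaining verifications are mechanical adaptations of the arguments already deployed in the proof of Proposition \ref{prop:small-cap-implies-concentration}.
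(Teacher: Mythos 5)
Your argument is correct and follows essentially the same route as the paper: concentration-compactness for $\{|f_n|^2\}$ with the dyadic pseudometric \eqref{eq:pseudometric-center-zero}, vanishing excluded via Corollary \ref{cor:dyadic-improvement}, dichotomy excluded via Corollary \ref{cor:bilinear-separated} and the Case 2 argument, and compactness converted into \eqref{eq:uniform-l2-decay}. The only (harmless) difference is organizational: the paper first rescales so that a heavy annulus sits at $I_0^\bullet$ and then shows the compactness centers stay bounded, whereas you apply the lemma first and rescale by the compactness centers afterwards.
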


\noindent This proposition will provide the input for the suitable application of the Br\'ezis--Lieb lemma, which is formulated in the appendix as Proposition \ref{prop:new-fvv}.

\begin{proof}[Proof of Proposition \ref{prop:dyadic-localization}]
Let $\{f_n\}$ be as in the statement of the proposition.
In view of Corollary \ref{cor:dyadic-improvement}, there exists $\ell_n\in\Z$ such that
$\norma{f_{n}}_{L^2(I_{\ell_n}^\bullet)}\gtrsim_p 1$, if $n$ is large enough.
Setting  $g_n:=2^{\ell_n/2}f_n(2^{\ell_n} \cdot)$, we then have that
\begin{equation}\label{lower-boung-g}
\norma{g_n}_{L^2(I_0^\bullet)}\gtrsim_p 1, 
\end{equation}
for every sufficiently large $n$.
Using Proposition \ref{prop:metric-concentration-compactness-lemma} with the pseudometric \eqref{eq:pseudometric-center-zero}, we obtain a subsequence $\{|g_{n_k}|^2\}$ that satisfies one of three possibilities. Because of \eqref{lower-boung-g}, vanishing does not occur. The argument given in Case 2 of the proof of Proposition \ref{prop:small-cap-implies-concentration} can be used in conjunction with Corollary \ref{cor:bilinear-separated} to show that the sequence $\{|g_{n_k}|^2\}$ does not satisfy dichotomy either. Therefore it must  satisfy compactness. Thus, there exists a sequence $\{N_k\}\subset\Z$ such that, for every $k\geq 1$ and $\eps>0$, there exists an integer $r=r(\eps)$ for which
\[ \int_{2^{N_k-r}\leq \ab{y}\leq 2^{N_k+r+1}}\ab{g_k(y)}^2\d y\geq 1-\eps. \]
Because of \eqref{lower-boung-g}, the sequence $\{N_k\}$ is bounded, $\sup_{k\geq 1}\ab{N_k}=:r_0<\infty$.
By redefining $r$ as $r+r_0+1$, it follows that
\begin{equation}\label{integral-R}
\int_{2^{-r}\leq \ab{y}\leq 2^{r}}\ab{g_k(y)}^2\d y\geq 1-\eps,\quad\text{ for every }k\geq 1.
\end{equation}
Defining the function 
\[ \theta(R):=\sup_{k\geq 1}\int_{\{R^{-1}\leq \ab{y}\leq R\}^\complement}\ab{g_k(y)}^2\d y, \]
then $R\mapsto \theta(R)$ is a non-increasing function of $R$ which is bounded by 1 and, in view of \eqref{integral-R}, satisfies $\theta(R)\to 0$, as $R\to\infty$. By construction,
\[ \int_{\{R^{-1}\leq \ab{y}\leq R\}^\complement}\ab{g_k(y)}^2\d y\leq \theta(R), \text{ for every }k\geq 1,R\geq 1,\]
which implies \eqref{eq:uniform-l2-decay} at once by taking $\Theta:=\theta^{\frac12}$. 
This concludes the proof.
\end{proof}

 We are finally ready to prove Proposition \ref{prop:existence-vs-concentration}.
\begin{proof}[Proof of Proposition \ref{prop:existence-vs-concentration}]
Let $\{f_n\}$ be as in the statement of the proposition.
Apply Proposition \ref{prop:dyadic-localization} to $\{f_n\}$,
and denote the resulting rescaled subsequence by $\{g_n\}$. 
From the $L^1$ cap estimate \eqref{eq:dyadic-l1-cap-bound} we know that, for each sufficiently large $n$,  
there exists an interval 
$J_n=[s_n-r_n,s_n+r_n]$, contained 
in a dyadic interval\footnote{Or its negative, but in that case we replace $f_n$ by its reflection around the origin.} $[2^{k_n},2^{k_n+1}]$,  such that
\[ \int_{J_n}\ab{g_n}\geq c\ab{J_n}^{\frac12}, \]
for some $c>0$ which is independent of $n$. By the Cauchy--Schwarz inequality,
\begin{equation}\label{lower-bound-l2}
\norma{g_n}_{L^2(J_n)}\geq c,
\end{equation}
and so estimate \eqref{eq:uniform-l2-decay} implies the existence of $C>0$ 
independent of $n$, such 
that $C^{-1}\leq \ab{s_n}\leq C$. Rescaling again, we may assume  $s_n=1$, for every $n$.

If $r^\ast:=\liminf_{n\to\infty}\ab{J_n}>0$, then  passing to the 
relevant subsequence that realizes the limit inferior we have
\[ \int_{1-2r^\ast}^{1+2r^\ast}g_n(y)\d y=\int_{1-2r^\ast}^{1+2r^\ast}\ab{g_n(y)}\d y\geq 
\int_{J_n}\ab{g_n}\gtrsim\sqrt{r^\ast}, \]
provided $n$ is large enough to ensure $J_n\subseteq [1-2r^\ast,1+2r^\ast]$. 
Therefore any $L^2$-weak limit of the sequence $\{g_n\}$ is nonzero. 
Here we used the nonnegativity of the sequence $\{g_n\}$.
By Proposition \ref{prop:new-fvv}, we conclude that there exists $0\neq g\in 
L^2(\R)$, such that possibly after a further extraction, $g_n\to g$ in $L^2(\R)$, as $n\to\infty$.
In other words, (i) holds.

It remains to consider the case when $\ab{J_n}\to 0$, as $n\to\infty$. In view of  \eqref{lower-bound-l2}, Proposition \ref{prop:small-cap-implies-concentration} applies, and the sequence $\{g_n\}$ concentrates at $y_0=1$, i.e. (ii) holds.
 This finishes the proof of Proposition \ref{prop:existence-vs-concentration} (and therefore of Theorem \ref{thm:Thm2}).
 \end{proof}

\section{Existence of extremizers}\label{sec:Existence}

In this section, we prove Theorem \ref{thm:Thm3}. 
The basic strategy is to choose an appropriate trial function $f$ for which  the ratio from \eqref{eq:SharpConvolutionFormGenP},
\begin{equation}\label{eq:RatioToTest}
\Phi_p(f):=\frac{\norma{f\sigma_p\ast f\sigma_p\ast f\sigma_p}_{L^2(\R^2)}^2}{\norma{f}_{L^2(\R)}^6}, 
\end{equation}
can be estimated via a simple lower bound. 
We will give different arguments depending on whether $1<p<2$ or $p>2$, which rely on distinct choices of trial functions. This can be explained by the different qualitative nature of the 3-fold convolutions 
$w\nu_p\ast w\nu_p\ast w\nu_p$ in the two regimes of $p$, see  Figures \ref{fig:graph_triple_convolution_powers} and \ref{fig:graph_triple_convolution_powers_leq2} below.
Here, and throughout this section, $\d\nu_p=\ddirac{s-|y|^p}\d y\d s$ denotes projection measure on the curve $s=|y|^p$, and the weight is given by $w=\ab{\cdot}^{(p-2)/3}$.
 Note that $\d\sigma_p=\sqrt{w}\d\nu_p$.
 
 The following  analogue of \cite[Proposition 6.4]{OSQ16} holds for 3-fold convolutions in $\R^2$.
\begin{proposition}\label{prop:convPowers}
	Given $p>1$,
	 the following assertions hold for $w\nu_p\ast w\nu_p\ast w\nu_p$:
	\begin{itemize}
		\item[(a)] It is absolutely continuous with respect to Lebesgue measure on $\R^2$.
		\item[(b)] Its support, denoted $E_p$, is given by
		\begin{equation}\label{eq:DefEp}
		 E_p=\{(\xi,\tau)\in\R^{2}:\tau\geq {3^{1-p}}{\ab{\xi}^p}\}. 
		 \end{equation}
		\item[(c)] If $p\geq 2$, then its Radon--Nikodym derivative, also denoted by $w\nu_p\ast w\nu_p\ast w\nu_p$, 
		defines a bounded, continuous function in the interior of the set $E_p$. 
		If $1<p<2$, then $w\nu_p\ast w\nu_p\ast w\nu_p$ defines a continuous function on the set 
		$$\widetilde{E}_p:=\{(\xi,\tau)\in\R^{2}:{3^{1-p}}{\ab{\xi}^p}<\tau< {2^{1-p}}{\ab{\xi}^p} \}.$$
		\item[(d)] It is even in $\xi$, 
		\[ (w\nu_p\ast w\nu_p\ast w\nu_p)(-\xi,\tau)=(w\nu_p\ast w\nu_p\ast w\nu_p)(\xi,\tau),\]
		for every $\xi\in\R$, $\tau>0$,
		and homogeneous of degree zero in the sense that 
		$$(w\nu_p\ast w\nu_p\ast w\nu_p)(\la\xi,\la^p\tau)=(w\nu_p\ast w\nu_p\ast 
		w\nu_p)(\xi,\tau), \text{ for every }\la>0.$$
		\item[(e)] It extends continuously  to the boundary of $E_p$, except at the point $(\xi,\tau)=(0,0)$, with values given by
		\begin{equation}\label{eq:BoundaryValues3fold}
		(w\nu_p\ast w\nu_p\ast w\nu_p)(\xi,{{3^{1-p}}\ab{\xi}^p})=\frac{2\pi}{\sqrt{3}p(p-1)}, 
		\text{ if } \xi\neq 0.
		\end{equation}
	\end{itemize}
\end{proposition}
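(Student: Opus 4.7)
The strategy is to express $w\nu_p \ast w\nu_p \ast w\nu_p$ explicitly as a pushforward measure, from which claims (a)--(e) can be read off. For any nonnegative test function $F$,
\begin{equation*}
\int F\, d(w\nu_p \ast w\nu_p \ast w\nu_p) = \int_{\R^3} F\bigl(y_1{+}y_2{+}y_3,\ |y_1|^p{+}|y_2|^p{+}|y_3|^p\bigr) \prod_{i=1}^3 w(y_i)\, dy_i.
\end{equation*}
For fixed $y_3$, the map $(y_1, y_2) \mapsto (\xi, \tau) := (y_1+y_2+y_3,\ |y_1|^p+|y_2|^p+|y_3|^p)$ is $2$-to-$1$, with Jacobian computed in \eqref{eq:jacobian} to equal $p\bigl(y_2|y_2|^{p-2} - y_1|y_1|^{p-2}\bigr)$. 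Integrating out $y_3$ yields the Radon--Nikodym density
\begin{equation*}
(w\nu_p \ast w\nu_p \ast w\nu_p)(\xi,\tau) = \frac{2}{p}\int_{I(\xi,\tau)} \frac{w(y_1)\,w(y_2)\,w(y_3)}{\bigl|\,y_1|y_1|^{p-2} - y_2|y_2|^{p-2}\bigr|}\, dy_3,
\end{equation*}
where $I(\xi,\tau) \subset \R$ is the set of admissible $y_3$ and $y_1 \leq y_2$ are the corresponding solutions. Absolute continuity (a) is immediate. This approach parallels \cite[Proposition 6.4]{OSQ16} and the general framework of the companion paper \cite{OSQ18}.

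Claims (b) and (d) are structural. For (b), strict convexity of $|\cdot|^p$ gives $|y_1|^p+|y_2|^p+|y_3|^p \geq 3|\xi/3|^p = 3^{1-p}|\xi|^p$ whenever $y_1{+}y_2{+}y_3=\xi$, with equality iff $y_1=y_2=y_3=\xi/3$; the reverse inclusion is clear. For (d), rescaling $y_i \mapsto \lambda y_i$ contributes $\lambda^{p-2}$ from the product of weights, $\lambda^{-p}$ from the constraint on $\tau$, and $\lambda^2$ from $dy_1\,dy_2$, whose product is $\lambda^0$; evenness in $\xi$ is inherited from evenness of $w$ and of $|\cdot|^p$.

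Claims (c) and (e) demand more work. For (c), I would verify that the integrand in the density formula is continuous in $(\xi,\tau)$ and locally uniformly integrable. When $p \geq 2$, $w$ is continuous on $\R$ and the Jacobian vanishes only at configurations lying over $\partial E_p$, so the density is continuous on the interior of $E_p$. When $1<p<2$, $w$ is singular at $0$, and the restriction $\tau < 2^{1-p}|\xi|^p$ defining $\widetilde{E}_p$ ensures no $y_i$ vanishes on the level set: indeed, $y_3 = 0$ there would force $y_1+y_2=\xi$ and $|y_1|^p+|y_2|^p = \tau$, whence $\tau \geq 2^{1-p}|\xi|^p$ by convexity. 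For (e), homogeneity reduces matters to the limit $\lim_{\tau \downarrow 3^{1-p}}(w\nu_p \ast w\nu_p \ast w\nu_p)(1,\tau)$; the level set contracts to the single point $y_1=y_2=y_3=1/3$, at which the Jacobian vanishes to first order. A Morse-type expansion of $\Psi(y_1,y_2) := |y_1|^p+|y_2|^p+|1{-}y_1{-}y_2|^p$ about $(1/3,1/3)$ yields a Hessian equal to $p(p-1)\,3^{2-p}$ times a positive-definite symmetric matrix of determinant $3$; a standard Gaussian computation, combined with the linear vanishing of the Jacobian along the tangent direction, produces the announced value $2\pi/(\sqrt{3}\,p(p-1))$. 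The most delicate step is making this boundary analysis uniform in the regime $1<p<2$, where the Morse degeneracy interacts with the singularity of $w$ at the origin; for that I would invoke the comparison and regularity results of \cite{OSQ18}.
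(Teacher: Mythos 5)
Your proposal is correct in substance and, like the paper, ultimately leans on the machinery of the companion paper \cite{OSQ18}: the paper's own proof simply cites \cite[Proposition 2.1 and Remark 2.3]{OSQ18} for everything when $p\geq 2$, and for parts (a), (b), (d) when $1<p<2$, supplying only one additional argument for (c) (and hence (e)) in the range $1<p<2$. That additional argument is exactly the observation you make: on $\widetilde{E}_p$ none of the three coordinates on the level set can vanish, since $y_i=0$ together with convexity of $\ab{\cdot}^p$ would force $\tau\geq 2^{1-p}\ab{\xi}^p$. So you have independently reproduced the one genuinely new step of the paper's proof. The differences are in packaging: you slice by $y_3$ and use the planar change of variables \eqref{eq:jacobian}, whereas \cite{OSQ18} parametrizes the level set by $\mathbb{S}^1$ via the implicit function $\alpha(\xi,\tau,\omega_1,\omega_2)$, a normalization that keeps the denominator nondegenerate in the interior; and for (e) you sketch a self-contained Morse/Gaussian computation (whose Hessian bookkeeping does yield $2\pi/(\sqrt{3}\,p(p-1))$), whereas the paper again defers to \cite{OSQ18}.

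One inaccuracy in (c) is worth correcting: it is not true that the Jacobian $\psi'(y_2)-\psi'(y_1)$ ``vanishes only at configurations lying over $\partial E_p$.'' For every \emph{interior} point $(\xi,\tau)$ of $E_p$, the Jacobian vanishes at the two endpoints of the interval $I(\xi,\tau)$, namely where $y_1=y_2=(\xi-y_3)/2$ and $\tau-\ab{y_3}^p=2\ab{(\xi-y_3)/2}^p$. Your density formula therefore always carries endpoint singularities in the $y_3$-integration; they are of square-root type and hence integrable, so boundedness and continuity of the density in the interior still hold, but your stated justification skips this and needs either a direct estimate of the endpoint contribution or, as both you and the paper do for the hardest steps, an appeal to \cite{OSQ18}. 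With that point repaired the argument is complete.
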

\begin{proof}
For $p\geq 2$, the result follows from \cite[Proposition 2.1]{OSQ18} and \cite[Remark 2.3]{OSQ18}.
If $1<p<2$, then the weight $w$ is singular at the origin, and an additional argument is required in order to establish parts (c) and (e) (as the others follow from \cite{OSQ18}). Note that part (e) also follows from \cite{OSQ18} after we verify (c), and so it suffices to show the latter.

Let $\psi=|\cdot|^p$.
From \cite[Remark 2.3]{OSQ18}, the following formula holds on $\widetilde{E}_p$,
\begin{multline}\label{eq:ContinuityMatters}
(w\nu_p\ast w\nu_p\ast w\nu_p)(\xi,\tau)\\
=\int_{\mathbb{S}^1}\frac{(|\frac{\xi}3+\alpha(\omega_1+\omega_2)||\frac{\xi}3-\alpha\omega_1||\frac{\xi}3-\alpha\omega_2|)^{\frac{p-2}3}}{\langle \omega_1,\frac{\nabla\psi(\xi/3+\alpha\omega_1+\alpha\omega_2)-\nabla\psi(\xi/3-\alpha\omega_1)}{\alpha}\rangle+\langle\omega_2,\frac{\nabla\psi(\xi/3+\alpha\omega_1+\alpha\omega_2)-\nabla\psi(\xi/3-\alpha\omega_2)}{\alpha}\rangle}\d\mu_{(\omega_1,\omega_2)},
\end{multline}
provided that the function $W$ defined by
\begin{equation}\label{eq:DefinitionW}
W(\xi,\omega_1,\omega_2):=(|{\xi}/3+\alpha(\omega_1+\omega_2)||{\xi}/3-\alpha\omega_1||{\xi}/3-\alpha\omega_2|)^{\frac{p-2}3}
\end{equation}
is continuous in the domain of integration. 
Here $\omega_1^2+\omega_2^2=1$, arc length measure on the unit circle $\mathbb{S}^1$ is denoted by $\mu$, and the function $\alpha=\alpha(\xi,\tau,\omega_1,\omega_2)$ is implicitly defined by
\[ \ab{\xi/3+\alpha(\omega_1+\omega_2)}^p+\ab{\xi/3-\alpha\omega_1}^p+\ab{\xi/3-\alpha\omega_2}^p=\tau, \]
see \cite{OSQ18} for details.
It follows that
\[\ab{\xi/3+\alpha(\omega_1+\omega_2)}^p+\ab{\xi/3-\alpha\omega_1}^p+\ab{\xi/3-\alpha\omega_2}^p<2^{1-p}\ab{\xi}^p, \]
provided $(\xi,\tau)\in\widetilde{E}_p$.
On the other hand, if $\xi/3-\alpha\omega_1=0$, then convexity of $\psi$ implies
\[ \ab{2\xi/3+\alpha\omega_2}^p+\ab{\xi/3-\alpha\omega_2}^p\geq 2^{1-p}\ab{\xi}^p, \]
and similarly if $\xi/3-\alpha\omega_2=0$, while if $\xi/3+\alpha(\omega_1+\omega_2)=0$, then 
\[ \ab{\xi/3-\alpha\omega_1}^p+\ab{\xi/3-\alpha\omega_2}^p\geq 2^{1-p}\ab{2\xi/3-\alpha(\omega_1+\omega_2)}^p=2^{1-p}\ab{\xi}^p. \]
It follows that none of these three terms can vanish in a neighborhood of any point $(\xi,\tau)\in\widetilde{E}_p$, and therefore $W$ is continuous there. 
Thus identity \eqref{eq:ContinuityMatters} holds, and this concludes the verification of part (c). 
\end{proof}

The boundedness of $w\nu_p\ast w\nu_p\ast w\nu_p$ 
provides an alternative way towards estimate
	\eqref{eq:SharpConvolutionFormGenP} via the usual application of the Cauchy--Schwarz inequality, at least in the restricted range $p\geq 2$. 
	Moreover, identity \eqref{eq:BoundaryValues3fold} and the argument in Lemma \ref{lem:p-upper-bound-concentration} together imply that the corresponding optimal constant 
	${\bf C}_p$ satisfies
	\begin{equation*}
	 \mathcal {\bf C}^6_p\geq \frac{2\pi}{\sqrt{3}p(p-1)},
	 \end{equation*}
	 which should be compared to \eqref{eq:IneqCriticalValueCp}.

\subsection{Effective lower bounds for ${\bf C}_p$}
We start by examining a simple lower bound, which is the analogue of \cite[Lemma 6.1]{OSQ16} for 3-fold convolutions in $\R^2$.

\begin{lemma}\label{lem:lowerBoundExp}
	Given a strictly convex function $\Psi:\R\to\R$ and a nonnegative function $w:\R\to[0,\infty)$, consider the measures 
	$\d\nu(y,s)=\ddirac{s-\Psi(y)}\d y\d s$ and $\d\sigma=\sqrt{w}\d\nu$. 
	Let $E$ denote the support of the convolution measure $\nu\ast\nu\ast\nu$.
	Given $\lambda>0, a\in\R$, let 
	$f_{\lambda,a}(y):=e^{-\lambda(\Psi(y)+ay)}\sqrt{w(y)}$. 
	Then
	\begin{equation}
	\label{eq:lowerBoundFunctional}
	\frac{\norma{f_{\la,a}\sigma\ast f_{\la,a}\sigma\ast 
			f_{\la,a}\sigma}_{L^2(\R^2)}^2}{\norma{f_{\la,a}}_{L^2(\R)}^6}
	\geq\frac{\norma{f_{\la,a}}_{L^2(\R)}^6}{\int_E e^{-2\la(\tau+a\xi)} \d\xi \d\tau},
	\end{equation}
	for every $f_{\la,a}\in L^2(\R)$ such that 
	$f_{\la,a}\sigma\ast f_{\la,a}\sigma\ast f_{\la,a}\sigma\in L^2(\R^2)$.
\end{lemma}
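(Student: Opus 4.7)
The plan is to obtain \eqref{eq:lowerBoundFunctional} via a single application of the Cauchy--Schwarz inequality, testing the triple convolution $(f_{\la,a}\sigma)^{\ast 3}$ against the function
\begin{equation*}
g(\xi,\tau) := e^{-\la(\tau+a\xi)}\one_E(\xi,\tau),
\end{equation*}
whose $L^2$-norm squared is precisely the denominator on the right-hand side of \eqref{eq:lowerBoundFunctional}. The role of the indicator $\one_E$ is that it is harmless: since $\sigma$ is supported on the graph $\{(y,\Psi(y))\}$, the triple convolution $(f_{\la,a}\sigma)^{\ast 3}$ is supported inside $E$, so multiplication by $\one_E$ changes nothing in the pairing.

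The key observation is that the exponent $\tau + a\xi$ is linear in $(\xi,\tau)$, and so once we unfold the convolution using the definition of $\sigma=\sqrt{w}\,\nu$, the pairing separates into a product of three identical one-dimensional integrals:
\begin{equation*}
\int_{\R^2}(f\sigma\ast f\sigma\ast f\sigma)(\xi,\tau)\, e^{-\la(\tau+a\xi)}\d\xi\d\tau
= \NOrma{\int_\R f(y)\sqrt{w(y)}\,e^{-\la(\Psi(y)+ay)}\d y}^3.
\end{equation*}
Specializing to $f=f_{\la,a}$, the integrand inside the cube becomes exactly $|f_{\la,a}(y)|^2 = w(y)e^{-2\la(\Psi(y)+ay)}$, so the pairing equals $\norma{f_{\la,a}}_{L^2(\R)}^6$.

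Combining the two computations with Cauchy--Schwarz yields
\begin{equation*}
\norma{f_{\la,a}}_{L^2(\R)}^{12}
= \ab{\langle f_{\la,a}\sigma\ast f_{\la,a}\sigma\ast f_{\la,a}\sigma,\, g\rangle}^2
\leq \norma{f_{\la,a}\sigma\ast f_{\la,a}\sigma\ast f_{\la,a}\sigma}_{L^2(\R^2)}^2 \int_E e^{-2\la(\tau+a\xi)}\d\xi\d\tau,
\end{equation*}
which rearranges to \eqref{eq:lowerBoundFunctional}. The only nontrivial step to verify carefully is the separation of the pairing, which amounts to writing $e^{-\la(\tau+a\xi)}=\prod_{i=1}^3 e^{-\la(\Psi(y_i)+ay_i)}$ under the constraints $\xi=y_1+y_2+y_3$ and $\tau=\Psi(y_1)+\Psi(y_2)+\Psi(y_3)$ imposed by the convolution; strict convexity of $\Psi$ is not used for the inequality itself, but ensures that $E$ has positive measure so that $g$ is genuinely nonzero, and it matches the hypothesis under which the lemma will be invoked (with $\Psi=|\cdot|^p$).
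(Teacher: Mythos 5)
Your proof is correct and is essentially the argument the paper has in mind (the paper defers to the proof of Lemma 6.1 of \cite{OSQ16}, which is exactly this Cauchy--Schwarz pairing against $e^{-\la(\tau+a\xi)}\one_E$, exploiting the linearity of $\tau+a\xi$ under the convolution constraints to factor the pairing into $\norma{f_{\la,a}}_{L^2}^6$). No gaps; the degenerate case $\int_E e^{-2\la(\tau+a\xi)}\d\xi\d\tau=\infty$ makes \eqref{eq:lowerBoundFunctional} trivially true, so nothing further is needed.
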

The proof is entirely parallel to that of \cite[Lemma 6.1]{OSQ16}. 
	Note that \eqref{eq:lowerBoundFunctional} implies
	\begin{equation}\label{eq:OpNormLowerBound} 
	\sup_{0\neq f\in L^2(\R)}\frac{\norma{f\sigma\ast f\sigma\ast f\sigma}_{L^2(\R^2)}^2}{\norma{f}_{L^2(\R)}^6}	\geq\sup_{\la>0,\,a\in\R}\frac{\norma{f_{\la,a}}_{L^2(\R)}^6}{\int_E e^{-2\la(\tau+a\xi)} \d\xi \d\tau}.
	\end{equation}
		Specializing Lemma \ref{lem:lowerBoundExp} to the case of the measure $\sigma_p$ with the natural choice of trail function $f(y)=e^{-\ab{y}^p}\ab{y}^{(p-2)/6}$, a quick computation yields
	\begin{equation}\label{eq:boundGammaPP}
	\Phi_p(f)
	\geq \frac{4\,\Gamma(\frac{p+1}{3p})^3}{3^{1-\frac{1}{p}}p^2\Gamma\bigl(\frac{1}{p}\bigr)}.
	\end{equation}
This lower bound is good enough to establish the strict inequality \eqref{eq:IneqCriticalValueCp} in a range of $p$ that includes the cubic case $p=3$ but {\it not} the quartic case $p=4$, so we have to refine it.
For the above choice of trial function, the corresponding ratio \eqref{eq:RatioToTest} can be expanded as an infinite series with nonnegative terms, whose coefficients are given {in} terms of the Gamma function and whose first term equals the expression on the right-hand side of \eqref{eq:boundGammaPP}.

\begin{proposition}\label{prop:second-lower-bound-Qp}
	Let $p>1$ and $f(y)=e^{-\ab{y}^p}\ab{y}^{(p-2)/6}\in L^2(\R)$. Then
	\begin{equation}\label{eq:series-expansion-p-power}
	\Phi_p(f)
	=\frac{3^{1-\frac{1}{p}}p^2\,\Gamma\bigl(\frac{1}{p}\bigr)}{2^3\,
		\Gamma\bigl(\frac{p+1}{3p}\bigr)^3}\sum_{n=0}^{\infty}(4n+1)2^{4n-1}
	\biggl(\sum_{k=0}^{n}\binom{2n}{2k}\binom{n+k-\frac12}{2n}
	I_{2k}(p)\biggr)^2,
	\end{equation}
	where the coefficients $\{I_{2k}(p)\}_{k\geq 0}$ are given by expression \eqref{eq:MomentExpression} below.
\end{proposition}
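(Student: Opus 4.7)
The plan is to compute both the numerator and the denominator of $\Phi_p(f)$ in closed form, then reduce the numerator to a one-variable integral over the cross-section of $E_p$ and expand the resulting profile in Legendre polynomials. I would first note that for $f(y)=e^{-\ab y^p}\ab y^{(p-2)/6}$ the substitution $u=2y^p$ turns
\[ \norma{f}_{L^2(\R)}^2=2\int_0^\infty e^{-2y^p}y^{\frac{p-2}3}\d y \]
into a Gamma integral, yielding $\norma{f}_{L^2(\R)}^6=\frac{2^{2-1/p}}{p^3}\Gamma(\tfrac{p+1}{3p})^3$, which supplies the denominator in $\Phi_p(f)$ with the correct prefactor $2^3\Gamma(\tfrac{p+1}{3p})^3$ (up to the remaining $3^{1-1/p}p^2\Gamma(1/p)$ that will appear from the numerator).

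Second, I would exploit the identity $f\sigma_p=e^{-\ab y^p}w(y)\d\nu_p$ with $w=\ab\cdot^{(p-2)/3}$, together with the fact that $\ab{y_1}^p+\ab{y_2}^p+\ab{y_3}^p=\tau$ on the support of the triple convolution. This gives
\[ (f\sigma_p\ast f\sigma_p\ast f\sigma_p)(\xi,\tau)=e^{-\tau}(w\nu_p\ast w\nu_p\ast w\nu_p)(\xi,\tau). \]
By the zero-homogeneity in Proposition \ref{prop:convPowers}(d), the second factor equals an even function $F(\xi/\tau^{1/p})$ on $[-3^{1-1/p},3^{1-1/p}]$. Changing variables $\xi=\tau^{1/p}\eta$ factorizes the $L^2$-norm:
\[ \norma{f\sigma_p\ast f\sigma_p\ast f\sigma_p}_{L^2(\R^2)}^2=\Bigl(\int_0^\infty e^{-2\tau}\tau^{\frac1p}\d\tau\Bigr)\int_{-3^{1-1/p}}^{3^{1-1/p}}F(\eta)^2\d\eta=\frac{\Gamma(1+\frac1p)}{2^{1+1/p}}\int_{-3^{1-1/p}}^{3^{1-1/p}}F(\eta)^2\d\eta. \]
Another rescaling $\eta=3^{1-1/p}u$ brings the remaining integral onto $[-1,1]$ and pulls out the overall factor $3^{1-1/p}$ that appears in \eqref{eq:series-expansion-p-power}.

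Third, I would expand $u\mapsto F(3^{1-1/p}u)$ in the orthogonal basis of even Legendre polynomials $\{P_{2n}\}_{n\geq 0}$ on $[-1,1]$, using the standard closed form
\[ P_{2n}(u)=2^{2n}\sum_{k=0}^n\binom{2n}{2k}\binom{n+k-\frac12}{2n}u^{2k}. \]
The Legendre coefficients are then $c_n=\frac{4n+1}{2}(4^n)\sum_{k=0}^n\binom{2n}{2k}\binom{n+k-\frac12}{2n}I_{2k}(p)$, where
\[ I_{2k}(p):=\int_{-1}^1 F(3^{1-1/p}u)u^{2k}\d u \]
matches the moment expression \eqref{eq:MomentExpression}. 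Applying Parseval with $\int_{-1}^1P_{2n}^2=2/(4n+1)$ turns $\int_{-1}^1 F(3^{1-1/p}u)^2\d u$ into precisely $\sum_{n\geq 0}(4n+1)2^{4n-1}\bigl(\sum_k\binom{2n}{2k}\binom{n+k-\frac12}{2n}I_{2k}(p)\bigr)^2$; combining with the Gamma factors collected above produces the identity claimed in \eqref{eq:series-expansion-p-power}.

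The main obstacle is legitimizing the Parseval step: $F$ need not vanish at the endpoints $\pm 3^{1-1/p}$ (indeed it attains $2\pi/(\sqrt3 p(p-1))$ there by Proposition \ref{prop:convPowers}(e)), so I would verify that $F$ is bounded on the relevant interval. For $p\geq 2$ this is immediate from Proposition \ref{prop:convPowers}(c); for $1<p<2$ one must check that the only potential singularity of $w\nu_p\ast w\nu_p\ast w\nu_p$ is the origin $(0,0)$, which has zero measure on any horizontal slice, so $F\in L^2([-3^{1-1/p},3^{1-1/p}])$ and its Legendre series converges in $L^2$. A secondary bookkeeping point is verifying that the moment integrals $I_{2k}(p)$ defined above coincide with the form stated in the unseen equation \eqref{eq:MomentExpression}, which is a direct unwinding of the change of variables defining $F$ in the $(y_1,y_2,y_3)$ coordinates.
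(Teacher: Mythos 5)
Your argument follows the paper's proof essentially step by step: the same use of $f\sigma_p\ast f\sigma_p\ast f\sigma_p=e^{-\tau}(w\nu_p\ast w\nu_p\ast w\nu_p)$, the same factorization of the $L^2$-norm via the zero-homogeneity from Proposition \ref{prop:convPowers}(d), the same Gamma-function evaluations (your prefactors are correct), and the same Parseval expansion in even Legendre polynomials. The one place where your plan is thinner than the paper's is the derivation of the explicit formula \eqref{eq:MomentExpression}: this is not a ``direct unwinding of the change of variables'' but a generating-function identity, obtained by computing $\int_{\R^2}e^{-(\tau-b\xi)}(w\nu_p\ast w\nu_p\ast w\nu_p)\,\d\xi\,\d\tau$ in two ways --- once via the homogeneity substitution, giving a power series in $b$ whose $b^{2n}$-coefficient involves $I_{2n}(p)$ and $\Gamma\bigl(\tfrac{2n+1}{p}\bigr)$, and once as the cube of $\int_\R e^{-\ab{y}^p+by}\ab{y}^{(p-2)/3}\d y$ --- and then equating coefficients. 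Finally, for $1<p<2$ your description of the singular set is slightly off (Proposition \ref{prop:convPowers}(c) gives continuity only on $\widetilde{E}_p$, i.e.\ for $2^{1-1/p}<\ab{\xi}<3^{1-1/p}$ on the slice $\tau=1$), but the Parseval step is still legitimate because $g_p\in L^2([-1,1])$ follows directly from the finiteness of $\norma{f\sigma_p\ast f\sigma_p\ast f\sigma_p}_{L^2(\R^2)}$.
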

 The proof will make use of the classical Legendre polynomials, denoted $\{P_n\}_{n\geq 0}$, which constitute a family of orthogonal polynomials with respect to the $L^2$-norm on the interval $[-1,1]$. Explicitly, they are given by\footnote{Recall that the binomial coefficient $\binom{\alpha}{n}:=\frac{\alpha(\alpha-1)\ldots(\alpha-n+1)}{n!}$ is also defined when $\alpha\notin\Z$.}
 \begin{equation}\label{eq:DefLegendre}
  P_n(t)=2^n\sum_{k=0}^{n}\binom{n}{k}\binom{\frac{n+k-1}{2}}{n}t^k, \;\;\;-1\leq t\leq 1,
  \end{equation}
  from where one checks that  $\langle P_m,P_n\rangle_{L^2}=\frac2{2n+1}\ddirac{n=m}$, see \cite[Corollary 2.16, Chapter 4]{SW71}.
  See also \cite{COS15, CS12a, Fo15, Go17, Ne18} for earlier appearances of Legendre and other families of orthogonal polynomials in sharp Fourier restriction theory.

\begin{proof}[Proof of Proposition \ref{prop:second-lower-bound-Qp}]
Start by noting that the function $f(y)=e^{-\ab{y}^p}\ab{y}^{(p-2)/6}$ coincides with $e^{-\tau}\sqrt{w(\xi)}$ on the support of $\sigma_p$.
Using this together with parts (b) and (d) of Proposition \ref{prop:convPowers}, {we obtain}
\begin{align}
\norma{f\sigma_p\ast f\sigma_p\ast f\sigma_p}_{L^2}^2
&=\norma{e^{-\tau}(w\nu_p\ast w\nu_p\ast w\nu_p)}_{L^2}^2 \notag\\
&=\int_0^{\infty}\int_{-3^{1-\frac1p}\tau^{\frac1p}}^{3^{1-\frac1p}\tau^{\frac1p}} e^{-2\tau}(w\nu_p\ast w\nu_p\ast w\nu_p)^2(\xi,\tau) \d \xi \d \tau \notag\\
&=\int_0^{\infty}\int_{-3^{1-\frac1p}}^{3^{1-\frac1p}} \tau^{\frac1p}e^{-2\tau}(w\nu_p\ast w\nu_p\ast w\nu_p)^2(\tau^{\frac1p}\la,\tau)  \d \la \d \tau \notag\\
&=\Big(\int_0^{\infty}\tau^{\frac1p}e^{-2\tau} \d \tau\Big)\int_{-3^{1-\frac1p}}^{3^{1-\frac1p}} (w\nu_p\ast w\nu_p\ast w\nu_p)^2(\la,1) \d \la \notag\\
&=\frac{3^{1-\frac1p}\Gamma(\tfrac{1}{p})}{p2^{1+\frac1p}}\int_{-1}^{1} (w\nu_p\ast w\nu_p\ast w\nu_p)^2(3^{1-\frac1p}t,1) \d t.\label{eq:LastLineLongComputation}
\end{align}
On the other hand,
\begin{equation}\label{eq:SecondPartLongComputation}
\norma{f}_{L^2}^2
=\int_{\R}e^{-2\ab{y}^p}\ab{y}^{\frac{p-2}3}\d y
=2\int_0^\infty e^{-2y^p}y^{\frac{p-2}3}\d y
=\frac{2^{\frac{2}{3}-\frac{1}{3p}}}{p}\Gamma\Bigl(\frac{p+1}{3p}\Bigr).
\end{equation}
Given  $t\in[-1,1]$, define $g_p(t):=(w\nu_p\ast w\nu_p\ast w\nu_p)\bigl(3^{1-\frac{1}{p}}t,1\bigr)$. 
Expanding $g_p$ in the basis of Legendre polynomials,
\begin{align*} 
\norma{g_p}_{L^2([-1,1],\d t)}^2&= 
\sum_{n=0}^{\infty}\frac{1}{\norma{P_n}_{L^2}^2}\Bigl(\int_{-1}^{1}g_p(t)P_n(t) \d t\Bigr)^2\\
&=\sum_{n=0}^{\infty}(4n+1)2^{4n-1}\biggl(\sum_{k=0}^{n}\binom{2n}{2k}\binom{n+k-\frac{1}{2}}{2n}
\int_{-1}^{1}g_p(t)t^{2k} \d t\biggr)^2,
 \end{align*}
 where the last identity follows from \eqref{eq:DefLegendre}, the normalization $\norma{P_n}_{L^2}^2=\frac2{2n+1}$, and the fact that $g_p$ is an even function of $t$.
 We proceed to find an explicit expression for the moments 
 $I_{n}(p):=\int_{-1}^{1}g_p(t)\,t^n\d t$.
 Given $b\in\R$, we compute:
 \begin{align}
\int_{\R^2}e^{-(\tau-b\xi)}&(w\nu_p\ast w\nu_p\ast w\nu_p)(\xi,\tau) \d \xi\d \tau\notag\\
&=\int_{0}^{\infty} \int_{-3^{1-\frac1p}}^{3^{1-\frac1p}} \tau^{\frac1p}e^{-\tau}e^{b\tau^{\frac1p}\la}(w\nu_p\ast w\nu_p\ast w\nu_p)( \la,1) \d\la\d \tau\notag\\
&=\sum_{n=0}^{\infty}\frac{3^{(1-\frac1p)(2n+1)}b^{2n}}{(2n)!}\Big(\int_{0}^{\infty} e^{-\tau}\tau^{\frac{2n+1}p}\d \tau\Big)\int_{-1}^{1} t^{2n}(w\nu_p\ast w\nu_p\ast w\nu_p)(3^{1-\frac{1}{p}}t,1) \d t\notag\\
&=\sum_{n=0}^{\infty}\frac{3^{(1-\frac1p)(2n+1)}b^{2n}}{(2n)!}\frac{2n+1}{p}\Gamma\Bigl(\frac{2n+1}{p}\Bigr) I_{2n}(p).\label{eq:LastLineLongComputation2}
\end{align}
This Laplace transform can be alternatively computed as follows: 
\begin{align}
\int_{\R^2}e^{-(\tau-b\xi)}(w\nu_p\ast & w\nu_p\ast w\nu_p)(\xi,\tau) \d \xi\d \tau
=\biggl(\int_{\R} e^{-\ab{y}^p}e^{by}\ab{y}^{\frac{p-2}3}\d y\biggr)^3\notag\\
&=\biggl(\sum_{n=0}^{\infty}\frac{2b^{2n}}{(2n)!}\int_0^\infty 
e^{-y^p}y^{\frac{p-2}{3}+2n}\d y\biggr)^3
=\biggl(\sum_{n=0}^{\infty}\frac{2b^{2n}}{p (2n)!}\Gamma\Bigl(\frac{p+1+6n}{3p}\Bigr)\biggr)^3.\label{eq:SecondPartLongComputation2}
\end{align}
Equating coefficients of the same degree, {we obtain} that
\begin{equation}\label{eq:MomentExpression}
I_{2n}(p)
=\frac{2^3\,(2n)!}{3^{(1-\frac{1}{p})(2n+1)}p^2(2n+1)\Gamma\bigl(\frac{2n+1}{p}\bigr)}\sum_{k=0}^n\sum_{m=0}^{n-k}\frac{\Gamma\bigl(\frac{p+1+6k}{3p}\bigr)
	\Gamma\bigl(\frac{p+1+6m}{3p}\bigr)\Gamma\bigl(\frac{p+1+6(n-k-m)}{3p}\bigr)}{(2k)!(2m)!(2(n-k-m))!}.
\end{equation}
Identity \eqref{eq:series-expansion-p-power} follows at once, and the proof is complete.
\end{proof}

\begin{figure}
\centering
\includegraphics[width=1\linewidth]{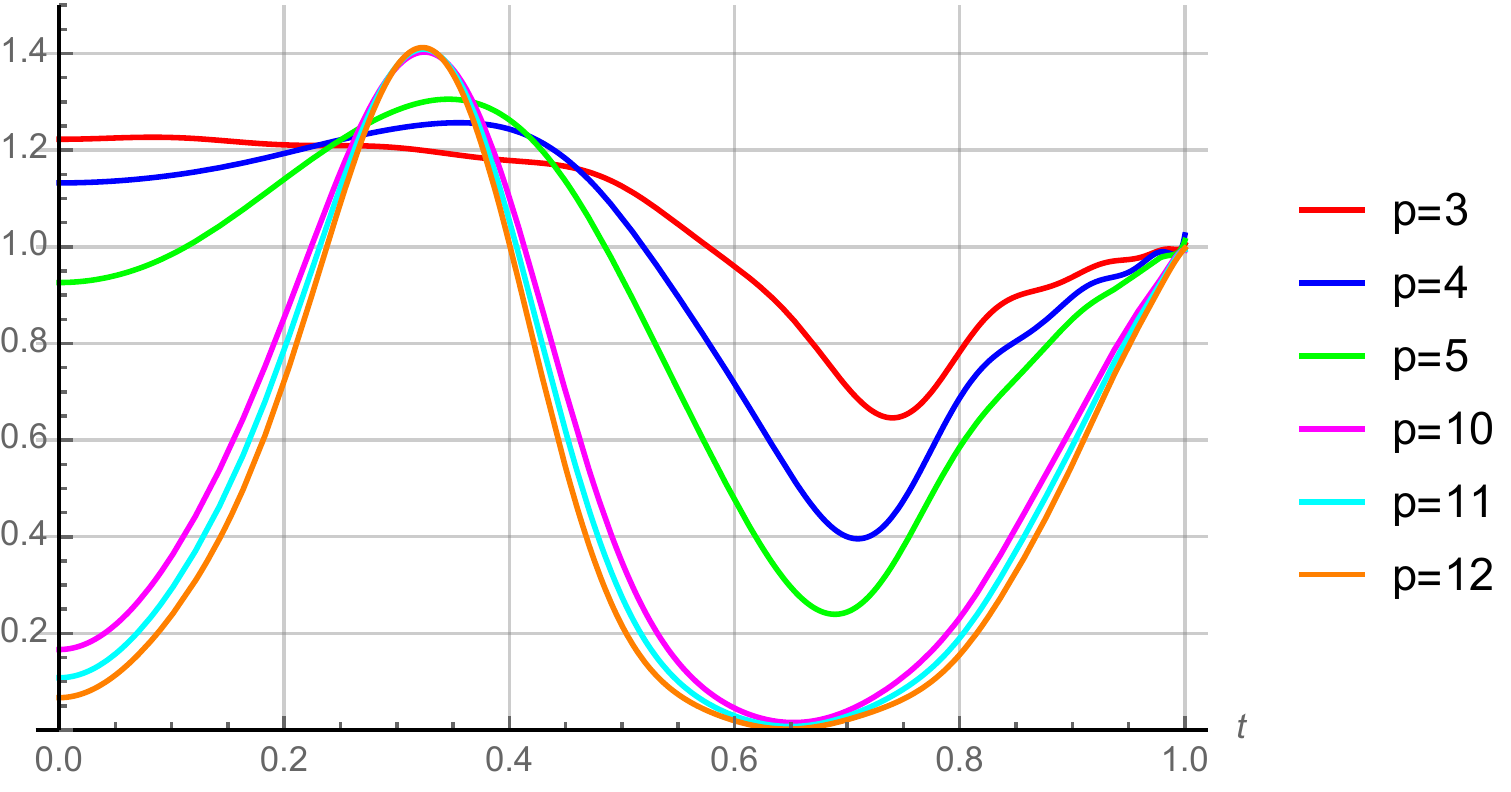}
\caption{Plot of the functions $g_{p,N}(t)$, appropriately normalized  so that they are close to $1$ at $t=1$,  for $p\in\{3,4,5,10,11,12\}$. We used $N=10$ for $p\in\{3,4,5\}$, and $N=15$ for $p\in\{10,11,12\}$.}
\label{fig:graph_triple_convolution_powers}
\vspace{2cm}
\includegraphics[width=1\linewidth]{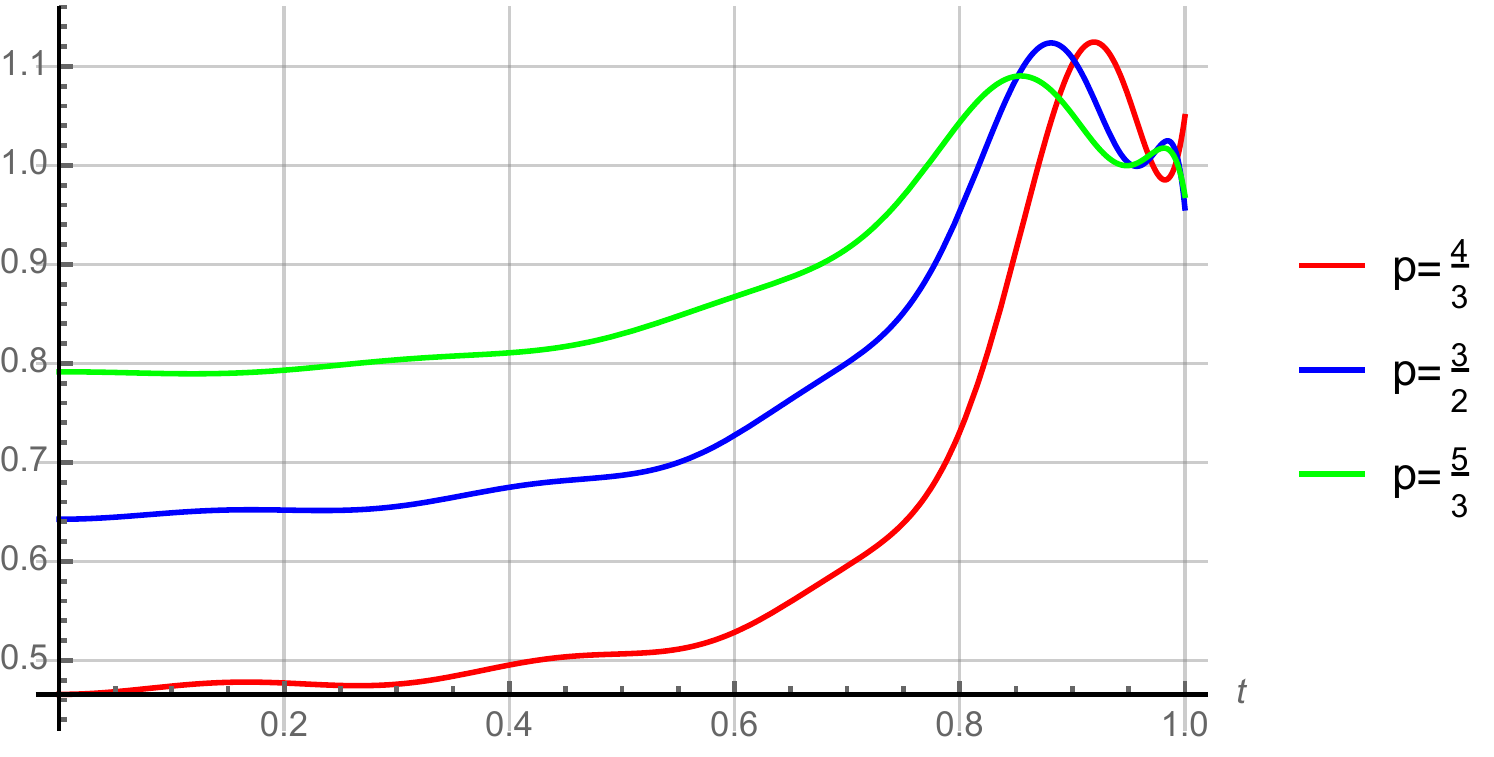}
\caption{Plot of the functions $g_{p,N}(t)$, appropriately normalized  so that they are close to $1$ at $t=1$, for $p\in\{\frac{4}{3},\frac{3}{2},\frac{5}{3}\}$. We used $N=10$.}
\label{fig:graph_triple_convolution_powers_leq2}
\end{figure}

\begin{remark}	
	From the preceding proof, we have the following approximating sequence $\{g_{p,N}\}_{N\geq 0}$ for $g_p$:
	\begin{align*}
	g_{p,N}(t)	:=\sum_{n=0}^{N}(4n+1)2^{2n-1}\biggl(\sum_{k=0}^{n}\binom{2n}{2k}\binom{n+k-\frac{1}{2}}{2n}I_{2k}(p)\biggr)P_{2n}(t),\;\;-1\leq t\leq 1.
	\end{align*}
	This  was used to construct Figures \ref{fig:graph_triple_convolution_powers} and \ref{fig:graph_triple_convolution_powers_leq2}.
	They correspond to approximate  graphs of $w\nu_p\ast w\nu_p\ast w\nu_p$ on the region $\{(\xi,1)\colon 0\leq \xi\leq 3^{1-{1}/{p}} \}$, for different values of $p$. 
	By homogeneity,  the full picture on $\R^2$ can be obtained from these graphs. 
  Figure \ref{fig:graph_triple_convolution_powers} indicates that, for  large $p$, the function $g_p(t)$  becomes small as $t\to 0$. The function $(w\nu_p\ast w\nu_p\ast w\nu_p)(\xi,\tau)$ should then be small near the $\tau$--axis, unlike the case of small values of $p$. This suggests that extremizing sequences may concentrate at the boundary if $p$ is large enough.
\end{remark}

\subsection{Proof of Theorem \ref{thm:Thm3}}
We consider the case $p>2$ first.
From Theorem \ref{thm:Thm2} and Proposition \ref{prop:second-lower-bound-Qp}, it suffices to show that there exists $N\in\N$, such that
\begin{equation}\label{eq:truncated-lower-bound}
\frac{3^{1-\frac{1}{p}}p^2\,\Gamma\bigl(\frac{1}{p}\bigr)}{2^3\,
	\Gamma\bigl(\frac{p+1}{3p}\bigr)^3}\sum_{n=0}^{N}(4n+1)2^{4n-1}
\biggl(\sum_{k=0}^{n}\binom{2n}{2k}\binom{n+k-\frac{1}{2}}{2n}
I_{2k}(p)\biggr)^2>\frac{2\pi}{\sqrt{3}p(p-1)},
\end{equation}
where the coefficients $I_{2k}(p)$ are given by \eqref{eq:MomentExpression}. 
The range of validity of \eqref{eq:truncated-lower-bound} can be estimated by performing an accurate numerical calculation. 
Taking $N=15$, one checks that inequality \eqref{eq:truncated-lower-bound} holds for every $p\in(2,p_0)$, where  $p_0\in[4,5]$ and can be numerically estimated by $p_0\approx 4.803$, with 3 decimal places.
Increasing  the value of $N$ does not seem to substantially increase $p_0$. 

If $1<p<2$,  then inequality \eqref{eq:truncated-lower-bound} fails (for every $N\in\N$).
Incidentally, note that if $p=2$, then the left- and right-hand sides of \eqref{eq:truncated-lower-bound} are equal (for every $N\in\N$) since the 3-fold convolution of projection measure on the parabola is constant inside its support, see \cite[Lemma 4.1]{Fo07}.
We are thus led to a different trial function. For $n\in\N$, define 
\begin{equation}\label{eq:TrialFunction1p2}
f_n(y)=e^{-\frac{n}{2}(\ab{y}^p-py)}\ab{y}^{-\frac{2-p}6}.
\end{equation}
In light of Lemma \ref{lem:p-upper-bound-concentration}, the sequence $\{f_n\norma{f_n}_{L^2}^{-1}\}$ concentrates at $y_0=1$. 
Passing to a continuous parameter $\la>0$, Lemma \ref{lem:lowerBoundExp} yields the lower bound
\[ \Phi_p(f_\la)
\geq \frac{\norma{f_\la}_{L^2(\R)}^6}{\int_{E_p} e^{-\la(\tau-p\xi)}\d\xi \d\tau}=:\phi_p(\la), \]
which we proceed to analyze. Since
\begin{gather*}
\norma{f_\la}_{L^2(\R)}^2=\int_{-\infty}^{\infty} 
e^{-\la(\ab{y}^p-py)}\ab{y}^{-\frac{2-p}3}\d y,\\
\int_{E_p} e^{-\la(\tau-p\xi)}\d \xi\d \tau
=\int_{-\infty}^{\infty}e^{\la p\xi}\Big(\int_{3^{1-p}\ab{\xi}^p}^{\infty} e^{-\la\tau}\d \tau\Big) \d \xi
=\frac{1}{\la}\int_{-\infty}^\infty e^{-\la(3^{1-p}\ab{\xi}^p-p\xi)}\d \xi,
\end{gather*}
we have that
\[ \phi_p(\la)=\la\frac{\Bigl(\int_{-\infty}^{\infty} 
	e^{-\la(\ab{y}^p-py)}\ab{y}^{-\frac{2-p}3}\d y\Bigr)^3}{\int_{-\infty}^\infty 
	e^{-\la(3^{1-p}\ab{\xi}^p-p\xi)}\d \xi}. \]
In view of \eqref{eq:OpNormLowerBound}, we have that ${\bf C}_p^6\geq \phi_p(\la)$, for every $\la>0$. 
Therefore it suffices to show that $\phi_p(\la)>\frac{2\pi}{\sqrt{3}p(p-1)}$, provided $\la$ is large enough. 
This is the content of the following lemma, which we choose to formulate in terms of the function $\varphi_p(\la):=\phi_p(\la^{-1})$.
	
\begin{lemma}\label{lem:perturbative}
	Let $p\in(1,2)$. Then
	\begin{align}
	\label{eq:value0}
	&\lim_{\la\to 0^+}\varphi_p(\la)=\frac{2\pi}{\sqrt{3}p(p-1)},\\
	\label{eq:value-der0}
	&\lim_{\la\to 0^+}\varphi_p'(\la)=\frac{\pi(2-p)(2p-1)}{9\sqrt{3}\,p^2(p-1)^2},
	\end{align}
	In particular, if $\la>0$ is small enough, then $\varphi_p(\la)>\frac{2\pi}{\sqrt{3}p(p-1)}$.
\end{lemma}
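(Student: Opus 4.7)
The plan is to apply Laplace's method to both integrals in $\phi_p(\mu) = \mu N(\mu)^3/D(\mu)$, where $\mu := 1/\la$ (so $\varphi_p(\la) = \phi_p(\mu)$ and $\mu \to \infty$ as $\la \to 0^+$); here $N(\mu) := \int_\R e^{-\mu g(y)}\ab{y}^{-(2-p)/3}\d y$ with $g(y) := \ab{y}^p - py$, and $D(\mu) := \int_\R e^{-\mu h(\xi)}\d\xi$ with $h(\xi) := 3^{1-p}\ab{\xi}^p - p\xi$. A short calculation locates the unique global minima at $y = 1$ (where $g(1) = 1-p$ and $g''(1) = p(p-1)$) and at $\xi = 3$ (where $h(3) = 3(1-p)$ and $h''(3) = p(p-1)/3$); crucially, both minimizers are strictly positive, so $g$ and $h$ are real-analytic in neighborhoods of their respective minima even when $p \in (1,2)$, the contributions from $\{y \le 0\}$ and $\{\xi \le 0\}$ are exponentially subdominant, and the weight $\ab{y}^{-(2-p)/3}$ is integrable at the origin (since $(2-p)/3 < 1$).

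Leading-order Laplace asymptotics give
\begin{align*}
N(\mu) &\sim e^{\mu(p-1)}\sqrt{\tfrac{2\pi}{\mu p(p-1)}},\\
D(\mu) &\sim e^{3\mu(p-1)}\sqrt{\tfrac{6\pi}{\mu p(p-1)}},
\end{align*}
from which \eqref{eq:value0} is immediate once one observes that $(2\pi)^{3/2}/\sqrt{6\pi} = 2\pi/\sqrt{3}$, so that the prefactors together with the overall factor of $\mu$ combine to $2\pi/(\sqrt{3}\,p(p-1))$. To reach \eqref{eq:value-der0} I would push the expansion one order further,
\begin{align*}
N(\mu) &= e^{\mu(p-1)}\sqrt{\tfrac{2\pi}{\mu p(p-1)}}\bigl(1 + a_1\mu^{-1} + O(\mu^{-2})\bigr),\\
D(\mu) &= e^{3\mu(p-1)}\sqrt{\tfrac{6\pi}{\mu p(p-1)}}\bigl(1 + b_1\mu^{-1} + O(\mu^{-2})\bigr),
\end{align*}
with $a_1, b_1$ extracted in the standard way: Taylor-expand $g, h$, and the weight $\ab{1+u}^{-(2-p)/3}$ around the minima, rescale $u = v/\sqrt{\mu}$, expand the exponential, and integrate against $e^{-p(p-1)v^2/2}$ (for $N$) or $e^{-p(p-1)v^2/6}$ (for $D$) using $\int_\R v^{2k}e^{-av^2}\d v = \sqrt{\pi/a}\,(2k-1)!!/(2a)^k$. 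Substitution into $\phi_p(\mu) = \mu N(\mu)^3/D(\mu)$ then yields
\[ \phi_p(\mu) = \frac{2\pi}{\sqrt{3}\,p(p-1)}\bigl(1 + (3a_1 - b_1)\mu^{-1} + O(\mu^{-2})\bigr), \]
so that $\varphi_p'(0^+) = \frac{2\pi}{\sqrt{3}\,p(p-1)}(3a_1 - b_1)$.

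The crucial identity to verify is $3a_1 - b_1 = \frac{(2-p)(2p-1)}{18\,p(p-1)}$, which is equivalent to \eqref{eq:value-der0}. With our sign conventions this will reduce to the (slightly surprising) cancellation $b_1 = -a_1 = -\frac{(2-p)(2p-1)}{72\,p(p-1)}$, verified after collecting the sextic Gaussian moment (arising from the squared cubic Taylor coefficient) and the quartic Gaussian moment (arising from the quartic Taylor coefficient), together with the additional contributions from the weight on the $N$-side. Once \eqref{eq:value0}--\eqref{eq:value-der0} are in hand, the final assertion follows from Taylor's theorem: since $p \in (1,2)$ forces $(2-p)(2p-1) > 0$, we obtain $\varphi_p'(0^+) > 0$, hence $\varphi_p(\la) > 2\pi/(\sqrt{3}\,p(p-1))$ for all sufficiently small $\la > 0$. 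The main technical obstacle is precisely this sign-sensitive bookkeeping: an arithmetic slip in the combinatorial coefficients of the $\mu^{-1}$-correction would easily flip the sign of $\varphi_p'(0^+)$ and ruin the conclusion.
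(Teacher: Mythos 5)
Your proposal is correct and follows essentially the same route as the paper: a two-term Laplace expansion of both integrals around the interior minima $y=1$ and $\xi=3$ (the paper carries this out after the rescaling $\la\mapsto 2\la/(p(p-1))$, splitting off the exponentially small contributions from the regions containing the singularity of the weight, exactly as you indicate, and then differentiating the expansion). The correction coefficients you claim, $a_1=-b_1=\frac{(2-p)(2p-1)}{72\,p(p-1)}$, agree with the paper's $\mp\frac{(p-2)(2p-1)}{144}$ once that rescaling is taken into account, so the sign-sensitive bookkeeping you flag does come out as stated.
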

	\noindent Note that \eqref{eq:value0} follows from Lemma \ref{lem:p-upper-bound-concentration}, but we choose to present a unified approach that establishes both \eqref{eq:value0} and \eqref{eq:value-der0}.
\begin{proof}[Proof of Lemma \ref{lem:perturbative}]
Rewrite $\phi_p$ in the equivalent form
	\[ \phi_p(\la)=\la\frac{\Bigl(\int_{-\infty}^\infty
		e^{-\la(\ab{y}^p-1-p(y-1))}\ab{y}^{-\frac{2-p}3}\d y\Bigr)^3}{\int_{-\infty}^\infty 
		e^{-\la3^{1-p}(\ab{y}^p-3^p-p3^{p-1}(y-3))}\d y}. \]
	Define real-valued functions $y\mapsto\alpha(y)$ and $y\mapsto\beta(y)$ via\footnote{Note that $\alpha(y)=3^{-2}\beta(3y)$.} 
	\begin{align}
	\ab{y}^p-1-p(y-1)&=\binom{p}{2}\big((y-1)^2+\alpha(y-1)\big),\label{eq:DefAlphaAllR}\\
	\ab{y}^p-3^p-p3^{p-1}(y-3)&=3^{p-2}\binom{p}{2}\big((y-3)^2+\beta(y-3)\big).\notag
	\end{align}
	 By the binomial series expansion, if $\ab{y}<1$, then
	\begin{align}
	\alpha(y)&=\frac{p-2}{3}y^3+\frac{(p-2)(p-3)}{12}y^4+\ldots,\label{eq:defPhiAsympt}\\
	\beta(y)&=\frac{p-2}{3\cdot 3}y^3+\frac{(p-2)(p-3)}{12\cdot 3^2}y^4+\ldots.\label{eq:defPsiAsympt}
	\end{align}
	One easily checks that $|\alpha(y)|\to\infty$, $|\beta(y)|\to\infty$, as $|y|\to\infty$,	
	and 
	\begin{equation}\label{eq:LimitPhiPsiInfinity}
	\lim_{\la\to\infty}\la\alpha(\la^{-\frac12}{y})=\lim_{\la\to\infty}\la\beta(\la^{-\frac12}{y})=0,
	\end{equation}
        for each $y\in\R$. We also have that
 	\begin{align*}
	\int_{\R} \exp\Big(-\la\tfrac{\ab{y}^p-1-p(y-1)}{\binom{p}{2}}\Big)\ab{y}^{-\frac{2-p}3}\d y
	&=\la^{-\frac12}\int_{\R} e^{-y^2}e^{-\la\alpha(\la^{-\frac12}y)}\ab{1+\la^{-\frac12}y}^{-\frac{2-p}3}\d y,
	\end{align*}
	\begin{align*}
	\int_{\R} \exp\Big(-\la{\tfrac{3^{1-p}(\ab{y}^p-3^p-p3^{p-1}(y-3))}{\binom{p}{2}}}\Big)\d y
	&=3^{\frac12}\la^{-\frac12}\int_{\R} e^{-y^2}e^{-\frac \la3\beta((\frac 3\la)^{\frac12}y)}\d y,
	\end{align*}
	and consequently
	\begin{equation*}
	\phi_p(\tfrac{2\la}{p(p-1)})
	=\frac{2}{\sqrt{3}p(p-1)}\frac{\Bigl(\int_{\R} e^{-y^2}e^{-\la\alpha(\la^{-\frac12}y)}\ab{1+\la^{-\frac12}y}^{-\frac{2-p}3}\d y\Bigr)^3}{\int_{\R} e^{-y^2}e^{-\frac \la3\beta((\frac 3\la)^{\frac12}y)}\d y}.
	\end{equation*}
	For bookkeeping purposes, set
	\begin{align*}
	A_p(\la):=\Bigl(\int_{\R} e^{-y^2}e^{-\la\alpha(\la^{-\frac12}y)}\ab{1+\la^{-\frac12}y}^{-\frac{2-p}3}\d y\Bigr)^3,\text{ and }
	B_p(\la):=\int_{\R} e^{-y^2}e^{-\frac \la3\beta((\frac 3\la)^{\frac12}y)}\d y.
	\end{align*}
	We now analyze each expression. Recalling \eqref{eq:defPsiAsympt}, the numerator $A_p(\la)$ is seen to satisfy
	\begin{equation}\label{eq:ApproxA}
	A_p(\la)=\pi^{\frac32}\Bigl(1-\frac{(p-2)(2p-1)}{144\la}+O(\la^{-\frac32})\Bigr)^3, \text{ as } \la\to\infty.
	\end{equation}
	Since binomial series expansions are only valid inside the unit ball, this step requires some care which we now briefly describe.
	Split the integral defining $A_p(\la)$ into three regions,
	$$A_p^{\frac13}(\la)=\Big(\int_{-\infty}^{-\frac{\sqrt{\la}}2}+\int_{-\frac{\sqrt{\la}}2}^{\frac{\sqrt{\la}}2}+\int_{\frac{\sqrt{\la}}2}^{\infty}\Big) e^{-y^2}e^{-\la\alpha(\la^{-\frac12}y)}\ab{1+\la^{-\frac12}y}^{-\frac{2-p}3}\d y=:\text{I}+\text{II}+\text{III},$$
	and estimate each of them separately.
	The main contribution comes from the integral $\text{II}=\text{II}(\la)$.
	Appealing to \eqref{eq:defPhiAsympt} and to the binomial series expansion, we have that
	\begin{align*}
	\exp(-\la\alpha(\la^{-\frac12}y))&=1-\tfrac{p-2}3\la^{-\frac12}{y^3}-\tfrac{(p-2)(p-3)}{12}\la^{-1}{y^4}+\tfrac{(p-2)^2}{18}\la^{-1}{y^6}+O_y(\la^{-\frac32}),\\
	|1+\la^{-\frac12}y|^{-\frac{2-p}{3}}&=1+\tfrac{p-2}{3}\la^{-\frac12}y+\tfrac{(p-2)(p-5)}{18}\la^{-1}y^2+O_y(\la^{-\frac32}),
	\end{align*}
	uniformly in $y\in[-{\sqrt{\la}}/2,{\sqrt{\la}}/2]$. From this one easily checks that 
	$$\text{II}(\la)=\pi^{\frac{1}{2}}+\pi^{\frac{1}{2}}\frac{(p-2)(2p-1)}{144}\la^{-1}+O(\la^{-\frac{3}{2}}).$$
	Matters are thus reduced to verifying that the contributions from $\text{I}$ and $\text{III}$ become negligible, as $\la\to\infty$.
        On the region of integration of $\text{I}=\text{I}(\la)$, the factor $\ab{1+\la^{-1/2}y}^{-\frac{2-p}3}$ has an integrable singularity at $y=-{\lambda}^{1/2}$.
	Recalling the definition \eqref{eq:DefAlphaAllR} of the function $\alpha$, and changing variables $\la^{-1/2}y\rightsquigarrow x$, we have that
	$$\text{I}(\la)=\la^{\frac12}\int_{-\infty}^{-\frac12} e^{-\frac{2\la}{p(p-1)}(|1+x|^p-1-px)}  |1+x|^{-\frac{2-p}3} \d x.$$
	Invoking the elementary inequality $|1+x|^p-1-px\gtrsim_p |x|^p$, which is valid for every $x\leq-\tfrac12$ and $1<p<2$, we may use H\"older's inequality together with the local integrability of $x\mapsto|1+x|^{-\frac{2-p}3}$ in order to bound
         $$\text{I}(\la)=O_p(\la^{\frac12} \exp(-C_p\la)),$$ 
         for some $C_p>0$.
	The contribution of $\text{III}(\la)$ is easier to handle because no singularity occurs on the corresponding region of integration. 
	This concludes the verification of \eqref{eq:ApproxA}, which can then be differentiated term by term because there is sufficient decay. Therefore 
	\[\lim_{\la\to\infty}A_p(\la)= \pi^{\frac32}, \,\text{ and }\,\lim_{\la\to\infty}-\la^2A_p'(\la)= -\frac{3(p-2)(2p-1)\pi^{\frac32}}{144}. \]
	On the other hand, using the binomial series expansion \eqref{eq:defPsiAsympt} we obtain
	\[ \exp\big(-\tfrac \la3\beta((\tfrac 3\la)^{\frac12}y)\big)=1-\tfrac{p-2}{3^{\frac{3}{2}}}\la^{-\frac{1}{2}}y^3-\tfrac{(p-2)(p-3)}{36}\la^{-1}y^4+\tfrac{(p-2)^2}{54}\la^{-1}y^6+O_y(\la^{-\frac{3}{2}}), \]
	uniformly in $y\in[-\frac12(\frac{\la}3)^{\frac{1}{2}},\frac12(\frac{\la}3)^{\frac{1}{2}}]$,
	so that an argument similar to that for $A_p(\la)$ gives
	\[ B_p(\la)=\pi^{\frac{1}{2}}+\frac{(p-2)(2p-1){\pi}^{\frac12}}{144\la}+O(\la^{-\frac32}), \]
	\[\lim_{\la\to\infty}B_p(\la)=\pi^{\frac{1}{2}},\text{ and } \lim_{\la\to\infty}-\la^2B_p'(\la)
	=\frac{(p-2)(2p-1){\pi}^{\frac12}}{144}. \]
	We conclude 
	\[  \lim_{\la\to 0^+}\vphi_p(\la)=\lim_{\la\to \infty}\phi_p(\la)=\lim_{\la\to \infty}\phi_p\Bigl(\frac{2\la}{p(p-1)}\Bigr)=\frac{2\pi}{\sqrt{3}p(p-1)}.\]
	To address \eqref{eq:value-der0}, note that 
	\[\vphi_p'(\la)=-\la^{-2}\phi_p'({\la^{-1}}), 
	\text{ and so }
	\lim_{\la\to 0^+}\vphi_p'(\la)= \lim_{\la\to\infty} -\la^{2}\phi_p'(\la). \]
	Therefore
	\begin{align*}
	\lim_{\la\to \infty}-\la^2\frac{\d}{\d\la}\Big(\phi_p\Bigl(\frac{2\la}{p(p-1)}\Bigr)\Big)&= \frac{2\pi}{\sqrt{3}p(p-1)}\Bigl(-\frac{3(p-2)(2p-1)}{144}-\frac{(p-2)(2p-1)}{144}\Bigr)\\
	&=\frac{\pi(2-p)(2p-1)}{18\sqrt{3}\,p(p-1)},
	\end{align*}
	which readily implies \eqref{eq:value-der0}. This completes the proof of the lemma (and therefore of Theorem \ref{thm:Thm3}).
\end{proof}

\subsection{Improving $p_0$}\label{sec:other-powers-v2}
In view of the results from the last subsection, it is natural to let the functional $\Phi_p$ defined on \eqref{eq:RatioToTest} act on trial functions
$f(y)=e^{-\ab{y}^p}\ab{y}^{(p-2)/6+a}$, for different choices of $a$.\footnote{Note that $L^2$-integrability forces $a>-\frac{p+1}{6}$.}
By doing so, the value $p_0\approx 4.803$ can be improved. We turn to the details.

Set  $\kappa:=\ab{\cdot}^{(p-2)/3+a}$, and note that 
$$(\kappa\nu_p\ast\kappa\nu_p\ast\kappa\nu_p)(\la\xi,\la^{p}\tau)=\la^{3a}(\kappa\nu_p\ast\kappa\nu_p\ast\kappa\nu_p)(\xi,\tau), \text{ for every } \la>0.$$ 
Reasoning as in \eqref{eq:LastLineLongComputation} and \eqref{eq:SecondPartLongComputation}, one checks that

\begin{align*}
\norma{f\sigma_p\ast f\sigma_p\ast f\sigma_p}_{L^2(\R^2)}^2
=\frac{3^{1-\frac1p}\Gamma(\frac{1+6a}{p})}{p 2^{1+\frac{1+6a}{p}}}(1+6a)\int_{-1}^{1} (\kappa\nu_p\ast \kappa\nu_p\ast \kappa\nu_p)^2(3^{1-\frac1p}t,1) \d t,
\end{align*}
\begin{align*}
\norma{f}_{L^2(\R)}^2
=\frac{2^{\frac{2}{3}-\frac{1+6a}{3p}}}{p}\Gamma\Bigl(\frac{p+1+6a}{3p}\Bigr).
\end{align*}
Given $t\in[-1,1]$, define $h_p(t):=(\kappa\nu_p\ast \kappa\nu_p\ast \kappa\nu_p)(3^{1-\frac1p}t,1)$.
Expanding $h_p$ in the basis of Legendre polynomials, 
\[ \norma{h_p}_{L^2([-1,1],\d t)}^2= 
\sum_{n=0}^{\infty}(4n+1)2^{4n-1}\biggl(\sum_{k=0}^{n}\binom{2n}{2k}\binom{n+k-\frac{1}{2}}{2n}
\int_{-1}^{1}h_p(t)t^{2k}\d t\biggr)^2. \]
We proceed to find explicit expressions for the moments  $I_n(p,a):=\int_{-1}^{1} h_p(t)t^{n}\d t$.
Given $b\in\R$, we compute as in \eqref{eq:LastLineLongComputation2} and \eqref{eq:SecondPartLongComputation2}:
\begin{align*}
\int_{\R^2}e^{-(\tau-b\xi)}(\kappa\nu_p\ast \kappa\nu_p\ast \kappa&\nu_p)(\xi,\tau) \d \xi\d \tau\\
&=\sum_{n=0}^{\infty}\frac{3^{(1-\frac1p)(2n+1)}b^{2n}}{(2n)!}\frac{2n+1+3a}{p}\Gamma\Bigl(\frac{2n+1+3a}{p}\Bigr)I_{2n}(p,a)\\
&=\biggl(\sum_{n=0}^{\infty}\frac{2b^{2n}}{p (2n)!}\Gamma\Bigl(\frac{p+1+6n+3a}{3p}\Bigr)\biggr)^3.
\end{align*}
Equating coefficients as before, we find that the moment $I_{2n}(p,a)$ equals
\begin{multline*}
\frac{3^{-(1-\frac1p)(2n+1)}2^3(2n)!}{p^2(2n+1+3a)\Gamma\bigl(\frac{2n+1+3a}{p}\bigr)}
\sum_{k=0}^n\sum_{m=0}^{n-k}\frac{\Gamma\bigl(\frac{p+1+6k+3a}{3p}\bigr)
	\Gamma\bigl(\frac{p+1+6m+3a}{3p}\bigr)\Gamma\bigl(\frac{p+1+6(n-k-m)+3a}{3p}\bigr)}{(2k)!(2m)!(2(n-k-m))!}.
\end{multline*}
This implies
\begin{equation*}
\Phi_p(f)=\frac{3^{1-\frac{1}{p}}p^2\Gamma\bigl(\frac{1+6a}{p}\bigr)}{2^3 
	\Gamma\bigl(\frac{p+1+6a}{3p}\bigr)^3}(1+6a)\sum_{n=0}^{\infty}(4n+1)2^{4n-1}
\biggl(\sum_{k=0}^{n}\binom{2n}{2k}\binom{n+k-\frac{1}{2}}{2n}
I_{2k}(p,a)\biggr)^2,
\end{equation*}
and consequently the following lower bound holds, for every $N\geq 0$:
\begin{equation*}
\Phi_p(f)\geq\frac{3^{1-\frac{1}{p}}p^2\Gamma\bigl(\frac{1+6a}{p}\bigr)}{2^3 
	\Gamma\bigl(\frac{p+1+6a}{3p}\bigr)^3}(1+6a)\sum_{n=0}^{N}(4n+1)2^{4n-1}
\biggl(\sum_{k=0}^{n}\binom{2n}{2k}\binom{n+k-\frac{1}{2}}{2n}
I_{2k}(p,a)\biggr)^2.
\end{equation*}
By numerically evaluating this  sum with $N=15$ and $a=\frac7{15}$, one can establish a lower bound that beats the critical threshold $\frac{2\pi}{\sqrt{3}p(p-1)}$, for every $p\in(2,p_1)$, where $p_1\approx 5.485$ with 3 decimal places.
One further observes that the lower bound for small values of $a>0$ is larger than that for $a=0$, strongly suggesting that the original trial function $y\mapsto e^{-\ab{y}^p}\ab{y}^{(p-2)/6}$ might {\it not} be an extremizer in that range of exponents.

\section{Superexponential $L^2$-decay}\label{sec:Smoothness}
This section is devoted to the proof of Theorem \ref{thm:Thm4}.
We follow the outline of \cite{EHL11, HS12}, and shall sometimes be brief.
The Euler--Lagrange equation associated to \eqref{eq:SharpExtensionFormGenP} is 
\begin{equation}\label{eq:EulerLagrange}
\mathcal{E}_p^\ast \Big( \mathcal{E}_p(f)(\cdot,t) |\mathcal{E}_p(f)(\cdot,t)|^4 \Big)=\lambda f,
\end{equation}
see \cite[Proposition 2.4]{CQ14} for the variational derivation in a related context.
The following 6-linear form will play a prominent role in the analysis:
$$Q(f_1,f_2,f_3,f_4,f_5,f_6):=\int_{\R^2} \prod_{j=1}^3\mathcal{E}_p(f_j)(x,t) \overline{\mathcal{E}_p(f_{j+3})(x,t)} \d x \d t.$$
An immediate consequence of \eqref{eq:SharpExtensionFormGenP} is the following basic estimate:
\begin{equation}\label{eq:BasicEstimateQ}
|Q(f_1,f_2,f_3,f_4,f_5,f_6)|\lesssim\prod_{j=1}^6 \|f_j\|_{L^2(\R)}.
\end{equation}
The form $Q$ can be rewritten as follows:
\begin{equation*}
Q(f_1,f_2,f_3,f_4,f_5,f_6)
=\int_{\R^6} \prod_{j=1}^3 {f_j}(y_j)|y_j|^{\frac{p-2}6}\overline{{f}_{j+3}(y_{j+3})}|y_{j+3}|^{\frac{p-2}6}\ddirac{\alpha({\bf y})}\ddirac{\beta({\bf y})}\d {\bf y},
\end{equation*}
where ${\bf y}=(y_1,\ldots,y_6)\in\R^6$, $\alpha({\bf y}):= |y_1|^p+|y_2|^p+|y_3|^p-|y_4|^p-|y_5|^p-|y_6|^p$, and $\beta({\bf y}):= y_1+y_2+y_3-y_4-y_5-y_6$.
We will also consider the associated form 
$$K(f_1,f_2,f_3,f_4,f_5,f_6):=Q(|f_1|,|f_2|,|f_3|,|f_4|,|f_5|,|f_6|),$$
which is sublinear in each entry. Clearly, 
\begin{gather}
\label{eq:Immediate1}
|Q(f_1,f_2,f_3,f_4,f_5,f_6)|\leq K({f}_1,{f}_2,{f}_3,{f}_4,{f}_5,{f}_6),\\
\label{eq:NonImprovedBoundK}
K(f_1,f_2,f_3,f_4,f_5,f_6)\lesssim \prod_{j=1}^6 \norma{f_j}_{L^2(\R)}.
\end{gather}
Let us now introduce a parameter $s\geq 1$, which will typically be large.
If there exist $j\neq k$ such that $f_j$ is supported on $[-s,s]$ and $f_k$ is supported outside of $[-Cs,Cs]$, for some $C>1$, then  estimate \eqref{eq:NonImprovedBoundK} can be improved to
\begin{equation}\label{eq:ImprovedBoundK}
K(f_1,f_2,f_3,f_4,f_5,f_6)\lesssim C^{-\frac{p-1}6} \prod_{j=1}^6 \norma{f_j}_{L^2(\R)},
\end{equation}
in accordance to the bilinear estimates of Corollary \ref{cor:bilinear-separated}. 
Introducing the weighted variant
$$K_G(f_1,f_2,f_3,f_4,f_5,f_6):=\int_{\R^6} e^{G(y_1)-\sum_{j=2}^6 G(y_j)} \prod_{j=1}^6 |f_j(y_j)| |y_j|^{\frac{p-2}6}\ddirac{\alpha({\bf y})}\ddirac{\beta({\bf y})}\d {\bf y},$$
one checks at once that
\begin{equation}\label{eq:RelKWeightedK}
K(e^G f_1,e^{-G} f_2,e^{-G} f_3,e^{-G} f_4,e^{-G} f_5,e^{-G} f_6)=K_G(f_1,f_2,f_3,f_4,f_5,f_6).
\end{equation}
Given $\mu,\eps\geq 0$, define the function
\begin{equation}\label{eq:DefF}
G_{\mu,\eps}(y):=\frac{\mu|y|^p}{1+\eps|y|^p}.
\end{equation}
The same proof as \cite[Proposition 4.5]{HS12} yields
\begin{equation}\label{eq:BoundWeightedK}
K_{G_{\mu,\eps}}(f_1,f_2,f_3,f_4,f_5,f_6)\leq K(f_1,f_2,f_3,f_4,f_5,f_6),
\end{equation}
see \cite[Remark 4.6]{HS12}.
Split $f=f_<+f_>$ with ${f}_{>}:={f} \one_{[-s^2,s^2]^\complement}$, and define 
$$\|{f}\|_{\mu,s,\eps}:=\|e^{G_{\mu,\eps}}{f}_{>}\|_{L^2}.$$ 
\begin{definition}
A  function $f\in L^2(\R)$ is said to be a {\it weak solution} of  \eqref{eq:EulerLagrange} if there exists $\lambda>0$, such that
\begin{equation}\label{eq:WeakSolution}
Q(g,f,f,f,f,f)=\lambda\langle g,f\rangle_{L^2}, \text{ for every }g\in L^2(\R).
\end{equation}
\end{definition}
\noindent Note that if $f$ extemizes \eqref{eq:SharpExtensionFormGenP}, then $f$ satisfies \eqref{eq:WeakSolution} with $\lambda={\bf E}_p^6\|f\|_{L^2}^4$.
The following key step shows that for some positive $\mu$, the quantity $\|{f}\|_{\mu,s,\eps}$ is bounded in $\eps>0$.

\begin{proposition}\label{prop:KeySmoothnessStep}
Given $p>1$, let $f$ be a weak solution of the Euler--Lagrange equation  \eqref{eq:EulerLagrange} with $\|f\|_{L^2}=1$. If $s\geq 1$ is sufficiently large, then there exists $C<\infty$ such that
\begin{equation}\label{eq:Bootstrap}
\lambda\|{f}\|_{s^{-2p},s,\eps}\leq o_1(1)\|{f}\|_{s^{-2p},s,\eps}+C\sum_{\ell=2}^5\|{f}\|_{s^{-2p},s,\eps}^\ell+o_2(1),
\end{equation}
where  for $j\in\{1,2\}$ the quantity $o_j(1)\to 0$, as $s\to\infty$, uniformly in $\eps$.
Moreover the constant $C$ is independent of $s$ and $\eps$.
\end{proposition}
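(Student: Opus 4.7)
The plan is a bootstrap based on the weak Euler--Lagrange equation \eqref{eq:WeakSolution}. Write $G = G_{\mu,\eps}$ with $\mu = s^{-2p}$ for brevity. The calibration $\mu = s^{-2p}$ is chosen so that $G(y) \leq \mu |y|^p \leq 1$ on $\{|y|\leq s^2\}$; consequently $\|e^G f_<\|_{L^2}\leq e$ uniformly in $\eps>0$, which is the observation that will keep all forthcoming constants $\eps$-independent. I would take $g := e^{2G} f_>$ as test function in \eqref{eq:WeakSolution}; this function lies in $L^2$ because $e^{2G}\leq e^{2\mu/\eps}$ is bounded, and since $\supp f_<$ and $\supp f_>$ are disjoint,
\[
\lambda \|f\|_{\mu,s,\eps}^2 = \lambda\langle e^{2G} f_>, f\rangle_{L^2} = Q(e^{2G} f_>, f, f, f, f, f).
\]

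Next I would transfer the weight symmetrically across all six entries. Writing $e^{2G}f_> = e^G\cdot(e^G f_>)$ and $f = e^{-G}\cdot(e^G f)$, the sublinear bound \eqref{eq:Immediate1}, the identity \eqref{eq:RelKWeightedK}, and the monotonicity \eqref{eq:BoundWeightedK} together yield
\[
|Q(e^{2G}f_>, f, f, f, f, f)| \leq K_G(e^G f_>, e^G f, e^G f, e^G f, e^G f, e^G f) \leq K(e^G f_>, e^G f, e^G f, e^G f, e^G f, e^G f).
\]
Expanding $e^G f = e^G f_< + e^G f_>$ in positions $2,\ldots,6$ and invoking the sublinearity of $K$ in each slot produces $2^5$ terms indexed by subsets $A\subseteq\{2,\ldots,6\}$, where $A$ specifies which of those positions carries $e^G f_>$. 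Set $k := |A|$; note $\|e^G f_<\|_{L^2}\lesssim 1$ and $\|e^G f_>\|_{L^2}=\|f\|_{\mu,s,\eps}$.

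Each term is then estimated as follows. If $k\in\{0,1\}$, then there remain at least four positions of type $e^G f_<$ (supported in $[-s^2,s^2]$) while the distinguished position $1$ carries $e^G f_>$ (supported in $[-s^2,s^2]^\complement$); the sharpened bilinear estimate \eqref{eq:ImprovedBoundK} applies with $C=s$ to such a pair of indices, furnishing a gain $s^{-(p-1)/6}$. The crude multilinear bounds on the four remaining factors then yield an overall bound of $Cs^{-(p-1)/6}\|f\|_{\mu,s,\eps}^{k+1}$. If $k\in\{2,3,4,5\}$, the crude estimate \eqref{eq:NonImprovedBoundK} suffices and gives $\lesssim \|f\|_{\mu,s,\eps}^{k+1}$. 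Summing these $2^5$ contributions,
\[
\lambda\|f\|_{\mu,s,\eps}^2 \leq Cs^{-(p-1)/6}\bigl(\|f\|_{\mu,s,\eps} + \|f\|_{\mu,s,\eps}^2\bigr) + C\sum_{\ell=3}^{6}\|f\|_{\mu,s,\eps}^\ell,
\]
and dividing by $\|f\|_{\mu,s,\eps}$ (the conclusion being trivial if it vanishes) yields precisely \eqref{eq:Bootstrap}, with $o_1(1) = o_2(1) = Cs^{-(p-1)/6}\to 0$ as $s\to\infty$.

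The principal technical obstacle is ensuring that every constant in the chain above is independent of $\eps$. This hinges on the calibration $\mu = s^{-2p}$, which forces $G\leq 1$ on $[-s^2,s^2]$ and thereby bounds $\|e^G f_<\|_{L^2}$ by $e\|f\|_{L^2}$, together with the $\eps$-uniform weight inequality \eqref{eq:BoundWeightedK} recalled from \cite{HS12}, which guarantees that moving the weight from a single entry to all six entries does not accumulate any $\eps$-dependent loss. A subsidiary point is the verification that $g=e^{2G}f_>$ is an admissible test function in \eqref{eq:WeakSolution}, which needs only the boundedness of $e^{2G}$ for fixed $\eps>0$; the resulting identity is then $\eps$-independent after the weight transfer is performed.
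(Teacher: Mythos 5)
Your setup — testing \eqref{eq:WeakSolution} with $e^{2G}f_>$, transferring the weight via \eqref{eq:RelKWeightedK} and \eqref{eq:BoundWeightedK}, expanding $h=h_<+h_>$ by sublinearity, and using the crude bound \eqref{eq:NonImprovedBoundK} for the terms with at least two copies of $h_>$ among positions $2,\dots,6$ — is exactly the paper's strategy, and the calibration $\mu=s^{-2p}$ giving $\|e^Gf_<\|_{L^2}\lesssim 1$ uniformly in $\eps$ is correct. But there is a genuine gap in your treatment of the terms with $k\in\{0,1\}$: you claim that \eqref{eq:ImprovedBoundK} applies with $C=s$ to the pair consisting of position $1$ (carrying $e^Gf_>$, supported in $[-s^2,s^2]^\complement$) and a position carrying $e^Gf_<$ (supported in $[-s^2,s^2]$). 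That estimate requires one function supported on $[-s,s]$ and the other outside $[-Cs,Cs]$; here both supports live at the \emph{same} scale $s^2$ and are merely adjacent, so the hypothesis holds only with $C=1$ and there is no gain whatsoever. Equivalently, in Corollary \ref{cor:bilinear-separated} one has $k=k'$ and the decay factor is $2^0=1$. Without that gain, the $k=0$ term yields an $O(1)$ constant rather than $o_2(1)$, and the $k=1$ term yields $O(1)\|f\|_{s^{-2p},s,\eps}$ rather than $o_1(1)\|f\|_{s^{-2p},s,\eps}$; since the implicit constant is comparable to the one in \eqref{eq:BasicEstimateQ} there is no reason for it to be smaller than $\lambda$, and the bootstrap collapses.

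The missing idea is a \emph{three-scale} decomposition: split $f_<=f_{\ll}+f_\sim$ with $f_{\ll}=f\one_{[-s,s]}$ and $f_\sim$ supported in $\{s<|y|\le s^2\}$. The pair $(e^Gf_{\ll},\,e^Gf_>)$ is genuinely separated by the ratio $s$, so \eqref{eq:ImprovedBoundK} does furnish the factor $s^{-(p-1)/6}$ there (together with $\|e^Gf_{\ll}\|_{L^2}\lesssim e^{\mu s^p}$); for the adjacent piece one instead uses that $\|f_\sim\|_{L^2}=\|f\one_{\{s<|y|\le s^2\}}\|_{L^2}\to 0$ as $s\to\infty$ because $f$ is a fixed $L^2$ function. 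This also shows that your claimed rate $o_1(1)=o_2(1)=Cs^{-(p-1)/6}$ is too strong: the true $o_j(1)$ contain $\|f_\sim\|_{L^2}$, which tends to zero with no universal rate and depends on $f$.
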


\begin{proof}
We start by introducing some notation.
Let $G:=G_{\mu,\eps}$ be as in \eqref{eq:DefF}.
Let $h:=e^G{f}$, $h_{>}:=e^G{f}_{>}$ and $h_{<}:=h-h_{>}$.
Further split $f_{<}=f_{\ll}+f_\sim$ and $h_{<}=h_{\ll}+h_\sim$, where ${f}_{\ll}:={f} \one_{[-s,s]}$ and $h_{\ll}:=e^G{f}_{\ll}$.
Since $f$ satisfies \eqref{eq:WeakSolution}, we have that
\begin{multline*}
\lambda\|e^G{f}_{>}\|_{L^2}^2
=\lambda\langle e^{2G}{f}_{>},{f}_{>}\rangle_{L^2}
=\lambda\langle e^{2G}{f}_{>},{f}\rangle_{L^2}
=Q(e^{2G}f_>,f,f,f,f,f)
\\=Q(e^{G}h_>,f,f,f,f,f)
=Q(e^G h_>,e^{-G} h,e^{-G} h,e^{-G} h,e^{-G} h,e^{-G} h)=:Q_G.$$
\end{multline*}
It follows from \eqref{eq:Immediate1}, \eqref{eq:RelKWeightedK} and \eqref{eq:BoundWeightedK} that
$|Q_G|\lesssim K(h_>,h,h,h,h,h).$
Writing $h=h_<+h_>$, the sublinearity of $K$ implies 
$$|Q_G|\lesssim K(h_>,h_<,h_<,h_<,h_<,h_<)+\Big(\sideset{}{'}\sum+\sideset{}{''}\sum\Big)K(h_>,h_{j_2},h_{j_3},h_{j_4},h_{j_5},h_{j_6}),$$
where the first sum, denoted $B_1$, is taken over indices $j_2,\ldots,j_6\in\{>,<\}$ with exactly one of the $j_k$ equal to $>$, and the second sum, denoted $B_2$, is taken over indices $j_2,\ldots,j_6\in\{>,<\}$ with two or more of the $j_k$ equal to $>$.
We estimate the three terms separately. 
For the first one,
\begin{align*}
A:=K(h_>,h_<,h_<,h_<,h_<,h_<)
&\leq K(h_>,h_\ll,h_<,h_<,h_<,h_<)+ K(h_>,h_\sim,h_<,h_<,h_<,h_<)\\
&\lesssim \|h_>\|_{L^2}\big(s^{-\frac{p-1}6}\|h_\ll\|_{L^2}+\|h_\sim\|_{L^2}\big) \|h_<\|_{L^2}^4,
\end{align*}
where we  made use of the support separation of $h_>$ and $h_\ll$ via \eqref{eq:ImprovedBoundK}.
Since $\|f\|_{L^2}=1$, the following estimates hold
\begin{align*}
\|h_<\|_{L^2}\lesssim e^{\mu s^{2p}},\;
\|h_\ll\|_{L^2}\lesssim e^{\mu s^{p}},\;\text{ and }
\|h_\sim\|_{L^2}\lesssim e^{\mu s^{2p}} \|f_\sim\|_{L^2},
\end{align*}
and therefore
$$A\lesssim \|h_>\|_{L^2}\big(s^{-\frac{p-1}6}e^{\mu(s^p-s^{2p})}+\|f_\sim\|_{L^2}\big) e^{5\mu s^{2p}}.$$
The terms $B_1, B_2$ can be estimated in a similar way. One obtains:
\begin{align*}
B_1\lesssim \|h_>\|^2_{L^2}\big(s^{-\frac{p-1}6}e^{\mu(s^p-s^{2p})}+\|f_\sim\|_{L^2}\big) e^{4\mu s^{2p}},\text{ and }
B_2\lesssim \|h_>\|_{L^2} \Big(\sum_{\ell=2}^5 \|h_>\|_{L^2}^\ell\Big)e^{3\mu s^{2p}}.
\end{align*}
The result follows by choosing $\mu=s^{-2p}$ and noting that $\|f_\sim\|_{L^2}\to 0$, as $s\to\infty$.
\end{proof}

We are finally ready to prove that extremizers decay super-exponentially fast.

\begin{proof}[Proof of Theorem \ref{thm:Thm4}]
Let $f\in L^2$ be an extremizer of \eqref{eq:SharpExtensionFormGenP}, normalized so that $\|f\|_{L^2}=1$. 
Then $f$ satisfies \eqref{eq:WeakSolution} with $\lambda={\bf E}_p^6$. 
Note that the function $(s,\eps)\mapsto\norma{ f}_{s^{-2p},s,\eps}$ is continuous in $(s,\eps)\in(0,\infty)^2$ and, for each fixed $\eps>0$,
\begin{equation}\label{eq:norm-s-to-zero}
\norma{f}_{s^{-2p},s,\eps}=\norma{e^{G_{s^{-2p},\eps}}{f}\one_{[-s^2,s^2]^\complement}}_{L^2}\to 0,\quad\text{as }s\to\infty.
\end{equation}
Let $C$ be the constant promised by Proposition \ref{prop:KeySmoothnessStep}, and consider the function
$$H(v):=\frac{\lambda}2v-C(v^2+v^3+v^4+v^5).$$
In \eqref{eq:Bootstrap} choose $s$ sufficiently large so that $o_1(1)\leq\frac{\lambda}2$, for every $\eps>0$.
This is possible since $o_1(1)\to 0$, as $s\to\infty$, uniformly in $\eps>0$.
Consequently,
$$H(\|{f}\|_{s^{-2p},s,\eps})\leq o_2(1),\quad\text{ for every }\eps>0.$$
In view of \eqref{eq:norm-s-to-zero}, and the facts that $H(0)=0$, $H'(0)>0$, and $H$ is concave on $[0,\infty)$, we may choose $s$ sufficiently large so that $\sup_{\eps>0}o_2(1)<H(v_0)$ and $\|{f}\|_{s^{-2p},s,1}\leq v_0$,
where $0<v_0<v_1$ are the two unique positive solutions of the equation
$$H(v_j)=\frac12\max\{H(v):v\geq 0\}.$$
By continuity, 
$\|{f}\|_{s^{-2p},s,\eps}\leq v_0$, for every $\eps>0$.
The Monotone Convergence Theorem then implies
$\|{f}\|_{s^{-2p},s,0}\leq v_0<\infty$, which translates into
$$e^{s^{-2p}|\cdot|^p}{f}\in L^2(\R).$$
Letting $\mu_0:=s^{-2p}$, where $s$ is large enough so that all of the above steps hold, we have thus proved the first part. For the second part, note that, for every $\mu\in\R$, the function
$$e^{\mu|x|}{f}(x)=e^{\mu|x|-\mu_0|x|^p}\cdot e^{\mu_0|x|^p}{f}(x)$$
belongs to $L^2(\R)$, since the first factor is bounded (here we use $p>1$) and the second factor is, as we have just seen, square integrable. The result then follows from the Paley--Wiener theorem as in \cite[Theorem IX.13]{RS75}.
\end{proof}
We finish with two concluding remarks. 
Firstly, the argument  can be adapted to the case of extremizers for odd curves treated in the next section.
Secondly, an interesting problem is whether extremizers are smooth (and not only their Fourier transforms).
This question has been addressed in the context of the Fourier extension operator on low dimensional spheres in \cite{CS12b, Sh16b}, but we have not investigated the extent to which their analysis can be adapted to the present case.

\section{The case of odd curves}\label{sec:Odd}
In this section we discuss the necessary {modifications} to establish analogues of Theorems \ref{thm:Thm2} and \ref{thm:Thm3} for odd curves. 
In general terms, the analysis is similar, but the existence of parallel tangents requires an extra symmetrization step. 
Estimate \eqref{eq:OddSharpConvolutionFormGenP} can  be \mbox{rewritten as}
\begin{equation}\label{eq:OddSharpExtensionFormGenP}
\norma{\mathcal{S}_p (f)}_{L^6(\R^2)}\leq{\bf O}_p \norma{f}_{L^2(\R)},
\end{equation}
where the  Fourier extension operator on the curve $s=y\ab{y}^{p-1}$ is given by 
\begin{equation}\label{eq:odd-fourier-extension-operator}
 \mathcal{S}_p (f)(x,t)=\int_\R e^{ixy}e^{ity\ab{y}^{p-1}}\ab{y}^{\frac{p-2}6}f(y)\d y.
\end{equation}
Given a real-valued function $f\in L^2(\R)$, denote the reflexion of $f$ with respect to the origin by $\tilde{f}:=f(-\cdot)$. 
One easily checks that
\[ \mathcal{S}_p (\tilde{f})(x,t)=\mathcal{S}_p (f)(-x,-t)=\overline{\mathcal{S}_p (f)(x,t)}, \]
where the bar denotes complex conjugation. In particular, 
\[ \norma{\mathcal{S}_p(f)\mathcal{S}_p(g)}_{L^3}=\norma{\mathcal{S}_p(f)\mathcal{S}_p(\tilde{g})}_{L^3}, \]
and so functions $f,g$ supported on intervals $I$ and $-I$, respectively, are seen to interact in the same way as if they were both  supported on $I$, 
unlike the case of even curves. In this way, one is led to symmetrize with respect to reflexion. This has already been observed in the case of the spheres $\mathbb{S}^1$ \cite{Sh16} and $\mathbb{S}^2$ \cite{CS12a}.
Symmetrization on $\mathbb{S}^2$ has been efficiently handled via $\delta$--calculus in \cite{Fo15}. 
The same method can be applied to the present case, but we choose to present a different argument which does not rely on the underlying convolution structure.
\begin{lemma}\label{lem:odd-symmetrization}
	Let $p>1$ and $f\in L^2(\R)$. Then
	\begin{equation}\label{eq:upper-bound-by-even}
	\frac{\norma{\mathcal{S}_p (f)}_{L^6(\R^2)}}{\norma{f}_{L^2(\R)}}\leq \sup_{\substack{0\neq g\in L^2(\R)\\ g\textrm{ even}}}\frac{\norma{\mathcal{S}_p (g)}_{L^6(\R^2)}}{\norma{g}_{L^2(\R)}}.
	\end{equation}
	If equality holds in \eqref{eq:upper-bound-by-even}, then $f$ is necessarily an even function.
\end{lemma}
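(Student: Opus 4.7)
The plan is to exploit the reflection symmetry of $\mathcal{S}_p$ by decomposing $f$ into its even and odd parts, and to combine a pointwise Pythagorean identity with Minkowski's inequality in $L^3(\R^2)$. First, I would reduce to the case $f\geq 0$ by using the convolution representation $\|\mathcal{S}_p(f)\|_{L^6}^6 = c_p\|f\mu_p\ast f\mu_p\ast f\mu_p\|_{L^2}^2$ together with the pointwise bound $|f\mu_p\ast f\mu_p\ast f\mu_p|\leq |f|\mu_p\ast |f|\mu_p\ast |f|\mu_p$, which yields $\|\mathcal{S}_p(f)\|_{L^6}\leq \|\mathcal{S}_p(|f|)\|_{L^6}$ while preserving the $L^2$-norm.

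Having reduced to real nonnegative $f$, I would write $f = f_e + f_o$ with $f_e(y) := \tfrac{1}{2}(f(y)+f(-y))$ and $f_o(y) := \tfrac{1}{2}(f(y)-f(-y))$. These are $L^2$-orthogonal, so $\|f\|_{L^2}^2 = \|f_e\|_{L^2}^2+\|f_o\|_{L^2}^2$. Setting $\Phi_e := \mathcal{S}_p(f_e)$ and $\Phi_o := \mathcal{S}_p(f_o)$, a direct computation from \eqref{eq:odd-fourier-extension-operator}, using the identities $\mathcal{S}_p(g)(-x,-t) = \mathcal{S}_p(\tilde g)(x,t)$ and $\overline{\mathcal{S}_p(g)} = \mathcal{S}_p(\overline{\tilde g})$, shows that $\Phi_e$ is real-valued and $(x,t)$-even while $\Phi_o$ is purely imaginary and $(x,t)$-odd. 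The crucial pointwise identity
\[|\Phi_e + \Phi_o|^2 = \Phi_e^2 + |\Phi_o|^2\]
then holds a.e.\ on $\R^2$, because $\Phi_e\overline{\Phi_o}$ is purely imaginary so its real part vanishes. Applying Minkowski's inequality in $L^3(\R^2)$ yields
\[\|\mathcal{S}_p(f)\|_{L^6}^6 = \bigl\|\Phi_e^2 + |\Phi_o|^2\bigr\|_{L^3}^3 \leq \bigl(\|\Phi_e\|_{L^6}^2 + \|\Phi_o\|_{L^6}^2\bigr)^3,\]
so that $\|\mathcal{S}_p(f)\|_{L^6}^2 \leq \|\mathcal{S}_p(f_e)\|_{L^6}^2 + \|\mathcal{S}_p(f_o)\|_{L^6}^2$.

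Denoting by $S$ the right-hand side of \eqref{eq:upper-bound-by-even}, the even piece satisfies $\|\mathcal{S}_p(f_e)\|_{L^6}\leq S\|f_e\|_{L^2}$ by the definition of $S$. For the odd piece, $|f_o|$ is even with $\||f_o|\|_{L^2} = \|f_o\|_{L^2}$, and a further application of the convolution-based pointwise bound gives $\|\mathcal{S}_p(f_o)\|_{L^6}\leq \|\mathcal{S}_p(|f_o|)\|_{L^6}\leq S\|f_o\|_{L^2}$. Combining,
\[\|\mathcal{S}_p(f)\|_{L^6}^2 \leq S^2(\|f_e\|_{L^2}^2 + \|f_o\|_{L^2}^2) = S^2\|f\|_{L^2}^2,\]
which is \eqref{eq:upper-bound-by-even}. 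For the equality case, equality in the $L^3$-Minkowski step forces $\Phi_e^2$ and $|\Phi_o|^2$ to be proportional a.e., while equality in each of the bounds on $\|\mathcal{S}_p(f_e)\|_{L^6}$ and $\|\mathcal{S}_p(f_o)\|_{L^6}$ forces both $f_e$ and (if nonzero) $|f_o|$ to extremize the even problem. Invoking the $L^2$-injectivity of $\mathcal{S}_p$, which follows from $\mu_p$ being mutually absolutely continuous with respect to Lebesgue measure on $\R\setminus\{0\}$, one deduces $\Phi_o\equiv 0$ and hence $f_o = 0$, so $f$ is even. The main obstacle I expect is precisely this equality analysis: one must rule out the a priori scenario where both $f_e$ and $|f_o|$ are nontrivial extremizers with $\Phi_e^2$ and $|\Phi_o|^2$ proportional (both $(x,t)$-even quantities, so consistency is not immediately violated), which requires exploiting finer structural information about $\mathcal{S}_p$ beyond what enters the main inequality.
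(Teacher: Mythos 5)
Your proof of the inequality \eqref{eq:upper-bound-by-even} is correct and follows essentially the same route as the paper's: the even/odd decomposition, the observation that $\mathcal{S}_p(f_e)$ is real-valued and $\mathcal{S}_p(f_o)$ purely imaginary, the resulting pointwise Pythagorean identity, the triangle inequality in $L^3(\R^2)$, and the passage from $f_o$ to the even function $\ab{f_o}$ via the convolution form. (Your preliminary reduction to $f\geq 0$ is harmless but unnecessary for this part; and where the paper bounds the resulting quotient by a maximum of two quotients, you instead sum the bounds $\norma{\mathcal{S}_p(f_e)}_{L^6}^2\leq S^2\norma{f_e}_{L^2}^2$ and $\norma{\mathcal{S}_p(f_o)}_{L^6}^2\leq S^2\norma{f_o}_{L^2}^2$ --- an equivalent step.)

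The genuine gap is in the equality case, and you have flagged it yourself: you do not rule out the scenario in which $f_e$ and $f_o$ are both nontrivial and each saturates its bound. Equality in the Minkowski step and ``$L^2$-injectivity of $\mathcal{S}_p$'' do not yield $\Phi_o\equiv 0$; no argument is given for that implication, and proportionality of $\ab{\Phi_e}^2$ and $\ab{\Phi_o}^2$ is, as you note, not self-contradictory. The missing observation is elementary and already latent in your own chain of inequalities: if $f_o\neq 0$, equality throughout forces in particular $\norma{\mathcal{S}_p(f_o)}_{L^6}=\norma{\mathcal{S}_p(\ab{f_o})}_{L^6}$, and the equality condition for the modulus bound coming from the convolution form \eqref{eq:OddSharpConvolutionFormGenP} --- equality holds if and only if $g=\ab{g}$ a.e. --- then gives $f_o=\ab{f_o}\geq 0$ a.e. A nonnegative odd function vanishes a.e., contradicting $f_o\neq 0$. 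So no finer structural information about $\mathcal{S}_p$ is needed; the oddness of $f_o$ does all the work. A smaller point: since the conclusion concerns $f$ itself, your opening reduction to $\ab{f}$ must also be routed through the same equality condition (equality in \eqref{eq:upper-bound-by-even} forces $\norma{\mathcal{S}_p(f)}_{L^6}=\norma{\mathcal{S}_p(\ab{f})}_{L^6}$ and hence $f=\ab{f}$), otherwise you only conclude that $\ab{f}$ is even.
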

\begin{proof}
Given $f\in L^2(\R)$, $f\neq 0$, decompose $f=f_e+f_o$, where $f_e$ is an even function, $f_e=\tilde{f}_e$ a.e. in $\R$, and $f_o$ is odd, $f_o=-\tilde{f}_o$ a.e. in $\R$. Then $\norma{f}_{L^2}^2=\norma{f_e}_{L^2}^2+\norma{f_o}_{L^2}^2$, and $\mathcal{S}_p(f_e)$ is real-valued  while $\mathcal{S}_p(f_o)$ is purely imaginary. Thus
\begin{equation}\label{eq:pointwise-l2}
 \ab{\mathcal{S}_p(f)(x,t)}^2=\ab{\mathcal{S}_p(f_e)(x,t)}^2+\ab{\mathcal{S}_p(f_o)(x,t)}^2,\quad\text{ for almost every }(x,t)\in\R^2, 
 \end{equation}
and so, by the triangle inequality for the $L^3$-norm,
$\norma{\mathcal{S}_p(f)}^2_{L^6}
\leq \norma{\mathcal{S}_p(f_e)}_{L^6}^2+\norma{\mathcal{S}_p(f_o)}_{L^6}^2.$
It follows that
\[ \frac{\norma{\mathcal{S}_p(f)}_{L^6}^2}{\norma{f}_{L^2}^2}\leq \frac{\norma{\mathcal{S}_p(f_e)}_{L^6}^2+\norma{\mathcal{S}_p(f_o)}_{L^6}^2}{\norma{f_e}_{L^2}^2+\norma{f_o}_{L^2}^2}\leq \max\biggl\{\frac{\norma{\mathcal{S}_p(f_e)}_{L^6}^2}{\norma{f_e}_{L^2}^2},\frac{\norma{\mathcal{S}_p(f_o)}_{L^6}^2}{\norma{f_o}_{L^2}^2}\biggr\}, \]
where we set either ratio on the right-hand side of this chain of inequalities to zero whenever the corresponding function $f_e$ or $f_o$ happens to vanish identically.
Therefore we may restrict attention to functions which are either even or odd. 
On the other hand, the equivalent convolution form \eqref{eq:OddSharpConvolutionFormGenP} of the inequality implies $\norma{\mathcal{S}_p(g)}_{L^6}\leq \norma{\mathcal{S}_p(\ab{g})}_{L^6}$, with equality if and only if $g=\ab{g}$ a.e. in $\R$. Thus
\begin{equation}\label{eq:inequality-even-and-odd}
\frac{\norma{\mathcal{S}_p(f)}_{L^6}^2}{\norma{f}_{L^2}^2}\leq  \max\biggl\{\frac{\norma{\mathcal{S}_p(f_e)}_{L^6}^2}{\norma{f_e}_{L^2}^2},\frac{\norma{\mathcal{S}_p(\ab{f_o})}_{L^6}^2}{\norma{f_o}_{L^2}^2}\biggr\}\leq \sup_{\substack{0\neq g\in L^2\\ g\textrm{ even}}}\frac{\norma{\mathcal{S}_p(g)}_{L^6}}{\norma{g}_{L^2}},
\end{equation}
where we used that both $f_e$ and $\ab{f_o}$ are even functions. In order for equality to hold in \eqref{eq:upper-bound-by-even}, both inequalities in \eqref{eq:inequality-even-and-odd} must be equalities. Inspection of the chain of inequalities leading to \eqref{eq:inequality-even-and-odd} shows that, if there is equality in the first inequality, then necessarily one of the following alternatives must hold:
\begin{itemize}
	\item $\norma{f_o}_{L^2}=0$, in which case $f=f_e$, and so $f$ is even; or
	\item $\norma{f_e}_{L^2}=0$ and $f_o=\ab{f_o}$ a.e. in $\R$, which implies that $f_o\equiv 0$, and so $f\equiv 0$ which does not hold by assumption; or
	\item $\norma{f_e}_{L^2}\norma{f_o}_{L^2}\neq 0$  and ${\norma{\mathcal{S}_p(f_e)}_{L^6}}{\norma{f_e}_{L^2}^{-1}}={\norma{\mathcal{S}_p(f_o)}_{L^6}}{\norma{f_o}_{L^2}^{-1}}={\norma{\mathcal{S}_p(\ab{f_o})}_{L^6}}{\norma{f_o}_{L^2}^{-1}}$, which again forces $f_o=\ab{f_o}$ a.e. in $\R$, so that $f_o=0$ which is absurd.
\end{itemize}
Therefore equality in \eqref{eq:upper-bound-by-even} forces $f$ to be an even function, as desired.
\end{proof}

For the remainder of this section, we restrict attention to nonnegative, even functions $f$. To prove the analogue of Proposition \ref{prop:existence-vs-concentration}, we need bilinear estimates as in Propositions \ref{prop:bilinear-p-power} and \ref{prop:unc-bilinear-p-power}, and an $L^1$ cap bound as in Proposition \ref{prop:l1-cap-bound-p-power}. These can be obtained in exactly the same way as for the case of even curves, since the Jacobian factor corresponding to \eqref{eq:jacobian} is now equal to
$p\ab{\ab{y'}^{p-1}-\ab{y}^{p-1}}$,
which amounts to the bound we used before.
We also need an analogue of Proposition \ref{prop:metric-concentration-compactness-lemma} with two points removed, i.e.\@ consider $X_{\bar{x},\bar{y}}:=X\setminus\{\bar{x},\bar{y}\}$ equipped with a pseudometric $\vrho:X_{\bar{x},\bar{y}}\times X_{\bar{x},\bar{y}}\to [0,\infty)$. The statement is analogous and we omit the obvious writing. Next,  defining the dyadic pseudometric centered at zero as in \eqref{eq:pseudometric-center-zero} and invoking the appropriate bilinear estimates, we obtain an analogue of Proposition \ref{prop:dyadic-localization}, the statement again being identical (omitted).
The analogue of Proposition \ref{prop:small-cap-implies-concentration} requires the pseudometric
\[ \vrho\colon\R\setminus\{-1,1\}\times \R\setminus\{-1,1\}\to [0,\infty),\quad \vrho(x,y):=\ab{k-k'}, \]
where $k,k'\in \Z$ are such that $\ab{\ab{x}-1}\in [2^k,2^{k+1})$ and $\ab{\ab{y}-1}\in [2^{k'},2^{k'+1})$.
It handles concentration at a pair of opposite points, which we now define.
\begin{definition}
Let $y_0\in\R$.
A sequence of even functions $\{f_n\}\subset L^2(\R)$ {\it concentrates at the pair} $\{-y_0,y_0\}$  if, for every $\eps,\rho>0$, there exists $N\in\N$ such that, for every $n\geq N$,
\[ \int_{\substack{\ab{y+y_0}\geq \rho,\\ \ab{y-y_0}\geq \rho}}\ab{f_n(y)}^2\d y<\eps\norma{f_n}_{L^2(\R)}^2. \]
\end{definition}
\noindent The following analogue of Proposition \ref{prop:small-cap-implies-concentration}  holds for odd curves.
\begin{proposition}\label{prop:small-cap-implies-concentration-odd}
	Let $\{f_n\}\subset L^2(\R)$ be an $L^2$-normalized extremizing sequence of even functions for \eqref{eq:OddSharpExtensionFormGenP}. Let $\{r_n\}$ be a sequence of nonnegative numbers, satisfying $r_n\to0$, as $n\to\infty$, and
	\[ \inf_{n\in\N}\int_{1-r_n}^{1+r_n}\ab{f_n(y)}^2\d y> 0. \]
	Then the sequence $\{f_n\}$ concentrates at the pair $\{-1,1\}$.
\end{proposition}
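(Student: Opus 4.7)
The proof proceeds by mirroring the argument of Proposition \ref{prop:small-cap-implies-concentration}, now using the two-point-removed variant of Proposition \ref{prop:metric-concentration-compactness-lemma} on $X_{-1,1} := \R \setminus \{-1, 1\}$, equipped with the pseudometric $\vrho$ introduced above, in which $\vrho(x, y) = \ab{k - k'}$ with $\ab{\ab{x}-1} \in [2^k, 2^{k+1})$ and $\ab{\ab{y}-1} \in [2^{k'}, 2^{k'+1})$. Since this pseudometric depends only on $\ab{x}$ and $\ab{y}$, every ball
\[ B(x, R) = \{y \in X_{-1,1} : 2^{k-R} \leq \ab{\ab{y}-1} < 2^{k+R+1}\} \]
is invariant under $y \mapsto -y$, which is the compatibility with evenness that drives the entire argument.

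First I would apply the two-point-removed concentration-compactness lemma to $\{\ab{f_n}^2\}$, passing to a subsequence satisfying one of \emph{compactness}, \emph{dichotomy}, or \emph{vanishing}. Dichotomy is ruled out exactly as in Case 2 of the proof of Proposition \ref{prop:small-cap-implies-concentration}: the supports of the pieces $f_{n,1}, f_{n,2}$ sit in $\vrho$-separated dyadic $\ab{\ab{y}-1}$-scales, and the odd-curve bilinear estimate (the analogue of Corollary \ref{bilinear-separated-1}, obtained by the same Hausdorff--Young computation with the Jacobian factor $p\ab{\ab{y'}^{p-1} - \ab{y}^{p-1}}$ that the preceding text invokes) delivers the decay needed to produce the contradiction $1 \leq \alpha^3 + (1 - \alpha)^3$ with $\alpha \in (0, 1)$.

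For compactness, we obtain centers $\{x_n\} \subset X_{-1,1}$ and a radius $R$ with $\int_{B(x_n, R)} \ab{f_n}^2 \geq 1 - \eps$. If $\limsup_n \ab{\ab{x_n}-1} > 0$, then after extraction there exist $\ell^* \in \Z$ and $N_0$ such that $\ab{\ab{x_n}-1} > 2^{\ell^*}$ for every $n \geq N_0$, so $B(x_n, R) \subseteq \{y : \ab{\ab{y}-1} \geq 2^{\ell^* - R}\}$. Taking $n$ large enough that $r_n < 2^{\ell^*-R}$ makes $J_n := [1-r_n, 1+r_n]$ disjoint from $B(x_n, R)$, and by evenness of $f_n$ together with the reflection-invariance of the ball, the interval $-J_n$ is disjoint as well. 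Choosing $\eps < \tfrac{1}{2}\inf_n \int_{J_n}\ab{f_n}^2$ forces $\int \ab{f_n}^2 > 1$, a contradiction; hence $\ab{\ab{x_n}-1} \to 0$. Writing $k_n \to -\infty$ for the index of $x_n$, we conclude $B(x_n, R) \subseteq [-1-2^{k_n+R+1}, -1+2^{k_n+R+1}] \cup [1-2^{k_n+R+1}, 1+2^{k_n+R+1}]$, and since $2^{k_n+R+1} \to 0$, this directly yields concentration at the pair $\{-1, 1\}$.

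For vanishing, set $f_{n,1} := f_n \one_{\{\ab{\ab{y}-1} \leq 2^{-k}\}}$ and $f_{n,2} := f_n \one_{\{\ab{\ab{y}-1} \geq 2^{k}\}}$, both automatically even. The argument of Case 3 in the proof of Proposition \ref{prop:small-cap-implies-concentration} transfers verbatim: the constancy in $k$ of $\alpha_k := \limsup_n \norma{f_{n,2}}^2_{L^2}$ and the bilinear decay for separated supports force $\alpha \in \{0, 1\}$, and the hypothesis $\inf_n \int_{J_n} \ab{f_n}^2 > 0$, combined with its reflected counterpart on $-J_n$ granted by evenness, rules out $\alpha = 1$. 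Hence $\alpha = 0$ and $\norma{f_{n,1}}_{L^2} \to 1$, which gives concentration at $\{-1, 1\}$. The main obstacle is simply bookkeeping the reflection symmetry at each step—verifying that the pseudometric balls, the decomposed pieces, and the exceptional neighborhoods all respect the evenness of the $f_n$; no estimate genuinely new to the odd-curve setting is required.
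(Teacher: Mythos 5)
Your proposal is correct and follows essentially the same route the paper takes: the two-point-removed variant of Proposition \ref{prop:metric-concentration-compactness-lemma} with the pseudometric based on $\ab{\ab{y}-1}$, the odd-curve bilinear estimates (which transfer because the Jacobian factor becomes $p\ab{\ab{y'}^{p-1}-\ab{y}^{p-1}}$), and the same three-case analysis, with the reflection-invariance of the pseudometric balls reconciling the argument with the evenness of the $f_n$. The paper only sketches this proof, and your write-up supplies exactly the intended details.
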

\noindent As in the case of even curves, this can be used to prove the analogue of {Proposition  \ref{prop:existence-vs-concentration}}.
\begin{proposition}\label{prop:existence-vs-concentration-odd}
	Let $\{f_n\}\subset L^2(\R)$ be an $L^2$-normalized extremizing sequence of nonnegative, even functions for  \eqref{eq:OddSharpExtensionFormGenP}. 
	Then there exists a subsequence $\{f_{n_k}\}$, and a sequence $\{a_k\}\subset \R\setminus\{0\}$,  such that the rescaled sequence $\{g_k\}$, $g_k:=\ab{a_k}^{1/2}f_{n_k}(a_k \cdot)$, satisfies one of the 
	following conditions:
	\begin{enumerate}
		\item[(i)] There exists $g\in L^2(\R)$ such that $g_{k}\to g$ in $L^2(\R)$, as $k\to\infty$, or
		\item[(ii)] $\{g_k\}$  concentrates at the pair $\{-1,1\}$.
	\end{enumerate}
\end{proposition}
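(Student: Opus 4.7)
The plan is to follow the proof of Proposition \ref{prop:existence-vs-concentration} closely, with the pseudometric replaced by a two-point variant and with the evenness of each $f_n$ exploited at the rescaling step. First I would apply the odd-curve analogue of Proposition \ref{prop:dyadic-localization} to $\{f_n\}$: this uses the dyadic pseudometric \eqref{eq:pseudometric-center-zero} centered at the origin together with the odd-curve bilinear estimates (obtained exactly as in Proposition \ref{prop:bilinear-p-power} and Corollary \ref{cor:bilinear-separated}, since the Jacobian on the curve $s=y|y|^{p-1}$ differs from that on $s=|y|^p$ only by the sign in cases that are absorbed by the modulus). The output is a subsequence, still denoted $\{f_n\}$, and a rescaling sequence $\{a_k\}$ such that $g_k:=|a_k|^{1/2}f_{n_k}(a_k\cdot)$ satisfies a uniform dyadic decay estimate of the form \eqref{eq:uniform-l2-decay}. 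Note that evenness is preserved by rescaling, so each $g_k$ is still even and nonnegative.

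Next I would apply the odd-curve version of the $L^1$ cap bound (Proposition \ref{prop:l1-cap-bound-p-power}, which goes through verbatim) to the extremizing sequence $\{g_k\}$. This produces, for each sufficiently large $k$, a sub-dyadic interval $J_k=[s_k-r_k,s_k+r_k]$ satisfying
\[\int_{J_k}|g_k|\geq c\,|J_k|^{\frac12},\]
and hence $\norma{g_k}_{L^2(J_k)}\geq c$ by Cauchy--Schwarz. By the evenness of $g_k$, the reflected interval $-J_k$ carries the same $L^2$-mass, so we may assume $s_k>0$. The uniform decay estimate \eqref{eq:uniform-l2-decay} forces $s_k\simeq 1$ uniformly in $k$, and a further rescaling (which again preserves evenness) reduces to $s_k=1$.

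I would then split into the two cases based on $r^\ast:=\liminf_k|J_k|$. If $r^\ast>0$, then passing to the subsequence realizing the lim inf, and using nonnegativity of $g_k$,
\[\int_{1-2r^\ast}^{1+2r^\ast}g_k(y)\d y\geq\int_{J_k}|g_k|\gtrsim\sqrt{r^\ast},\]
so any $L^2$-weak limit of $\{g_k\}$ restricted to $[1-2r^\ast,1+2r^\ast]$ is nonzero, and hence any weak limit of $\{g_k\}$ on $\R$ is nonzero. By the odd-curve analogue of the refined Br\'ezis--Lieb corollary (Proposition \ref{prop:new-fvv}), applied to the extremizing sequence $\{g_k\}$ whose weak limit is nonzero, we conclude that $g_k\to g$ in $L^2(\R)$ for some $0\neq g\in L^2(\R)$, yielding alternative (i). If instead $|J_k|\to 0$, then Proposition \ref{prop:small-cap-implies-concentration-odd} applies directly to $\{g_k\}$ and gives concentration at the pair $\{-1,1\}$, which is alternative (ii).

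The one genuinely new step relative to the even-curve case is Proposition \ref{prop:small-cap-implies-concentration-odd}, which rests on the two-point variant of Proposition \ref{prop:metric-concentration-compactness-lemma}. The main technical obstacle will be carrying out the trichotomy for the mass measure $|g_k|^2$ on $\R\setminus\{-1,1\}$ with the pseudometric $\vrho(x,y)=|k-k'|$ where $||x|-1|\in[2^k,2^{k+1})$ and similarly for $y$: the compactness alternative localizes mass in one of two symmetric balls around $\pm 1$, but evenness of $g_k$ transfers mass to the opposite ball automatically, and the dichotomy alternative is ruled out by the bilinear decay estimates via Corollary \ref{bilinear-separated-1} exactly as in Case 2 of the proof of Proposition \ref{prop:small-cap-implies-concentration}. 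Vanishing is excluded by the lower bound $\int_{1-r_n}^{1+r_n}|g_k|^2\geq c>0$. Granting this, the proof of Proposition \ref{prop:existence-vs-concentration-odd} is complete.
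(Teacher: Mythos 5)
Your proposal is correct and follows essentially the same route as the paper, which itself only sketches this proposition by indicating that the even-curve argument of Proposition \ref{prop:existence-vs-concentration} carries over once one has the odd-curve bilinear estimates, the $L^1$ cap bound, the two-point variant of Proposition \ref{prop:metric-concentration-compactness-lemma}, the analogue of Proposition \ref{prop:dyadic-localization}, and Proposition \ref{prop:small-cap-implies-concentration-odd}. Your observations that evenness is preserved under positive rescalings and that it renders the reflection step automatic are exactly the points needed to make the adaptation go through.
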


Let $\{f_n\}\subset L^2(\R)$ be an $L^2$-normalized sequence of nonnegative, even functions concentrating at the pair $\{-1,1\}$. 
Write $f_n=g_n+\tilde{g}_n$, where $g_n:=f_n\one_{[0,\infty)}$. In particular,  $\norma{g_n}_{L^2}={2}^{-1/2}$, and the sequence $\{g_n\}$ concentrates at $y_0=1$. 
The left-hand side of \eqref{eq:OddSharpConvolutionFormGenP} can be expanded into
\begin{align}
\norma{f_n\mu_p\ast f_n\mu_p\ast f_n\mu_p}_{L^2}^2&= \norma{g_n\mu_p\ast 
	g_n\mu_p\ast g_n\mu_p}_{L^2}^2+\norma{\tilde{g}_n\mu_p\ast \tilde{g}_n\mu_p\ast 
	\tilde{g}_n\mu_p}_{L^2}^2\\
&\quad+9\norma{g_n\mu_p\ast g_n\mu_p\ast \tilde{g}_n\mu_p}_{L^2}^2+9\norma{g_n\mu_p\ast 
	\tilde{g}_n\mu_p\ast \tilde{g}_n\mu_p}_{L^2}^2\notag\\
&\quad+6\langle g_n\mu_p\ast g_n\mu_p\ast g_n\mu_p\;,\;g_n\mu_p\ast g_n\mu_p\ast 
\tilde{g}_n\mu_p\rangle_{L^2}\notag\\
&\quad+6\langle g_n\mu_p\ast \tilde{g}_n\mu_p\ast \tilde{g}_n\mu_p\;,\;\tilde{g}_n\mu_p\ast \tilde{g}_n\mu_p\ast 
\tilde{g}_n\mu_p\rangle_{L^2}\notag\\
&\quad+18\langle g_n\mu_p\ast \tilde{g}_n\mu_p\ast \tilde{g}_n\mu_p\;,\;g_n\mu_p\ast g_n\mu_p\ast 
\tilde{g}_n\mu_p\rangle_{L^2}\notag\\
&\quad+6\langle g_n\mu_p\ast g_n\mu_p\ast g_n\mu_p\;,\;g_n\mu_p\ast 
\tilde{g}_n\mu_p\ast \tilde{g}_n\mu_p\rangle_{L^2}\notag\\
&\quad+6\langle g_n\mu_p\ast g_n\mu_p\ast \tilde{g}_n\mu_p\;,\;\tilde{g}_n\mu_p\ast 
\tilde{g}_n\mu_p\ast \tilde{g}_n\mu_p\rangle_{L^2}\notag\\
&\quad+2\langle g_n\mu_p\ast g_n\mu_p\ast g_n\mu_p\;,\;\tilde{g}_n\mu_p\ast 
\tilde{g}_n\mu_p\ast \tilde{g}_n\mu_p\rangle_{L^2}\notag.
\end{align}
The last three summands vanish since the corresponding supports intersect on a Lebesgue null set. 
The symmetry of the inner products then implies
\begin{multline*}
\norma{f_n\mu_p\ast f_n\mu_p\ast f_n\mu_p}_{L^2}^2\\
= 20\norma{g_n\mu_p\ast g_n\mu_p\ast g_n\mu_p}_{L^2}^2+30\langle g_n\mu_p\ast g_n\mu_p\ast g_n\mu_p\ast g_n\mu_p\;,\,  g_n\mu_p\ast g_n\mu_p\rangle_{L^2}.
\end{multline*}
Note that $\mu_p=\sigma_p$ on the support of $g_n$, where $\sigma_p$ was defined in \eqref{eq:DefSigmaMeasure}. It follows that
\begin{multline}\label{eq:even-odd-relationship-2}
\frac{\norma{f_n\mu_p\ast f_n\mu_p\ast f_n\mu_p}_{L^2}^2}{\norma{f_n}_{L^2}^6}\\
= \frac{5}{2}\frac{\norma{g_n\sigma_p\ast g_n\sigma_p\ast g_n\sigma_p}_{L^2}^2}{\norma{g_n}_{L^2}^6}
+\frac{15}{4}\frac{\langle g_n\sigma_p\ast g_n\sigma_p\ast g_n\sigma_p\ast g_n\sigma_p\;,\,  g_n\sigma_p\ast g_n\sigma_p\rangle_{L^2}}{\norma{g_n}_{L^2}^6}.
\end{multline}
Since the sequence $\{g_n\}$ concentrates at $y_0=1$, we have that
\[ \lim_{n\to\infty}\langle g_n\sigma_p\ast g_n\sigma_p\ast g_n\sigma_p\ast g_n\sigma_p\;,\,  g_n\sigma_p\ast g_n\sigma_p\rangle_{L^2}=0. \]
Heuristically, $g_n\sigma_p\ast g_n\sigma_p$ is supported near the point $(2,2)$, while 
$(g_n\sigma_p)^{\ast(4)}$ is supported near the point $(4,4)$, and so in the limit there is no contribution of the inner product. More precisely, given $\eps>0$, write $g_n=h_n+\kappa_n$, where $h_n:=g_n\one_{[1-\eps,1+\eps]}$ 
and $\norma{\kappa_n}_{L^2}^2\to 0$, as $n\to\infty$. 
If $\eps$ is  small enough, then support considerations force
\[\langle h_n\sigma_p\ast h_n\sigma_p\ast h_n\sigma_p\ast h_n\sigma_p\;,\; h_n\sigma_p\ast 
h_n\sigma_p\rangle_{L^2}=0,\text{ for every }n,\] 
whereas the cross terms involve $\kappa_n$, whose $L^2$-norm tends to zero, as $n\to\infty$.
 We conclude
\begin{equation}\label{eq:even-odd-concentration-relation}
\limsup_{n\to\infty}\frac{\norma{f_n\mu_p\ast f_n\mu_p\ast f_n\mu_p}_{L^2}^2}{\norma{f_n}_{L^2}^6}
=\frac{5}{2}\limsup_{n\to\infty}\frac{\norma{g_n\sigma_p\ast g_n\sigma_p\ast g_n\sigma_p}_{L^2}^2}{\norma{g_n}_{L^2}^6},
\end{equation}
and similarly for the limit inferior. 
Lemma \ref{lem:p-upper-bound-concentration} applied to the sequence $\{g_n\}$ implies
\[ \limsup_{n\to\infty}\frac{\norma{f_n\mu_p\ast f_n\mu_p\ast f_n\mu_p}_{L^2(\R^2)}^2}{\norma{f_n}_{L^2}^6}
\leq \frac{5\pi}{\sqrt{3}p(p-1)}. \]
Moreover, equality holds if we take $f_n=g_n+\tilde{g}_n$,  with $g_n:=2^{-1/2}h_n\norma{h_n}_{L^2}^{-1}$, and
\[ h_n(y):=e^{-n(\ab{y}^p-1-p(y-1))}\ab{y}^{\frac{p-2}6}\one_{[0,\infty)}(y). \]
Theorem \ref{thm:Thm5} is now proved.

\begin{remark}
The invariant form of condition \eqref{eq:IneqCriticalValueOp} in Theorem \ref{thm:Thm5} is
\begin{equation}\label{eq:InvariantThreshold}
\biggl(\frac{{\bf Q}_p}{{\bf C}_2}\biggr)^6>\frac{5}{p(p-1)}, 
\end{equation}
where ${\bf C}_2^6=\pi/\sqrt{3}$ is the best constant for the parabola in convolution form.
In the case $p=3$, a similar condition  appears in previous work of Shao \cite{Sh09} on the  Airy--Strichartz inequality which translates into $(\frac{{\bf Q}_3}{{\bf C}_2})^6>\frac{1}{3}$.  
This is of course incompatible with \eqref{eq:InvariantThreshold} but, as was recently pointed out in \cite[Remark 2.7]{FS17},  there is a problem in \cite[Lemma 6.1]{Sh09} in the passage from Eq. (89) to Eq. (90), as the argument presented there disregards the effect of symmetrization.
{On the other hand, the case $p=3$ of \eqref{eq:InvariantThreshold} agrees with \cite[Case $p=q=6$ of Theorem 1]{FS17}, once the proper normalization is considered.}
\end{remark}
We now come to the question of whether extremizers for \eqref{eq:OddSharpConvolutionFormGenP} actually exist, and discuss the case $1<p<2$ first.
Just as in \eqref{eq:TrialFunction1p2}, set $g_n(y):=e^{-\frac n2(|y|^p-py)}|y|^{-\frac{2-p}6}$.
	Its even extension, 
	\[f_n:=\frac{g_n\one_{[0,\infty)}+\tilde{g}_n\one_{(-\infty,0]}}{2^{\frac12}\norma{g_n}_{L^2(0,\infty)}},\]
	can be used to establish the strict inequality in \eqref{eq:IneqCriticalValueOp}. One simply uses \eqref{eq:even-odd-concentration-relation} together with the fact that the sequence $\{g_n\|g_n\|_{L^2}^{-1}\}_{s>0}$ concentrates at $y_0=1$, {so that an argument  similar to Lemma \ref{lem:perturbative} can be applied to the present case}.
	Therefore, extremizers for \eqref{eq:OddSharpConvolutionFormGenP} exist if $1<p<2$, and Theorem \ref{thm:Thm5.5} is now proved.

The case  $p\geq 2$ seems harder.
In view of \eqref{eq:even-odd-concentration-relation}, it is natural to use the methods of \S \ref{sec:Existence} in order to find the series expansion for the trial functions $f=2^{-1/2}(g+\tilde{g})$, where $g(y)=e^{-\ab{y}^p}\ab{y}^{(p-2)/6+a}\one_{[0,\infty)}(y)$, for different choices of $a$. By doing so, we find that we cannot reach the critical threshold $\frac{5\pi}{\sqrt{3}p(p-1)}$, but that we can approach it from below by varying the value of $a$.
We are led to the following conjecture.
\begin{conjecture}\label{cjct:nonexistence}
	For every $p\geq 2$,
	\[ \biggl(\frac{{\bf Q}_p}{{\bf C}_2}\biggr)^6=\frac{5}{p(p-1)}. \]
	Moreover, extremizers for \eqref{eq:OddSharpConvolutionFormGenP} do not exist.
\end{conjecture}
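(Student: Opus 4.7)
The plan is to establish both the sharp identity ${\bf Q}_p^6 = 5\pi/(\sqrt{3}p(p-1))$ for $p \geq 2$ and the non-existence of extremizers for \eqref{eq:OddSharpConvolutionFormGenP}. The lower bound ${\bf Q}_p^6 \geq 5\pi/(\sqrt{3}p(p-1))$ has already been established in the proof of Theorem \ref{thm:Thm5} via a concentrating sequence at $\{-1,1\}$. The remaining task is the matching upper bound; non-existence will then follow from Proposition \ref{prop:existence-vs-concentration-odd} together with a rigidity analysis of the equality cases.

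By Lemma \ref{lem:odd-symmetrization}, it suffices to consider nonnegative even $f \in L^2(\R)$. Decomposing $f = g + \tilde g$ with $g := f \one_{[0,\infty)}$, the identity \eqref{eq:even-odd-relationship-2} reduces the conjecture to proving
\[
L(g) := \|g\sigma_p \ast g\sigma_p \ast g\sigma_p\|_{L^2(\R^2)}^2 + \tfrac{3}{2}\langle g\sigma_p^{\ast 4}, g\sigma_p^{\ast 2}\rangle_{L^2(\R^2)} \leq \frac{2\pi}{\sqrt{3}p(p-1)}\|g\|_{L^2(\R)}^6,
\]
for every nonnegative $g$ supported on $[0,\infty)$. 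Setting $F := \widehat{g\sigma_p} = \alpha + i\beta$ and applying Plancherel (using that the cross term is real by virtue of $g \geq 0$), a direct computation yields the structural identity
\[
L(g) = \frac{1}{2(2\pi)^2}\int_{\R^2} (\alpha^2+\beta^2)^2 (5\alpha^2 - \beta^2)\, dx\, dt.
\]
The destructive contribution $-\beta^2$ reflects the parallel-tangent interaction at antipodal points $\pm y_0$ on the odd curve, a feature absent from \eqref{eq:SharpExtensionFormGenP}. The crude pointwise bound $5\alpha^2 - \beta^2 \leq 5(\alpha^2+\beta^2)$ yields only $L(g) \leq \tfrac{5}{2}\|g\sigma_p^{\ast 3}\|_{L^2}^2$, which overshoots the conjectured threshold by a factor of $5/2$; the $-\beta^2$ contribution must therefore be genuinely exploited.

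The proposed strategy is a pointwise Cauchy--Schwarz inequality $|g\sigma_p^{\ast k}(u,v)|^2 \leq (g^2 \nu_p^+)^{\ast k}(u,v)\cdot (w\nu_p^+)^{\ast k}(u,v)$, where $\nu_p^+$ denotes the restriction of $\nu_p$ to $y \geq 0$. This reduces the problem to the following key claim: for every $p \geq 2$,
\[
\sup_{(u,v) \in \supp(\nu_p^+)^{\ast 3}} (w\nu_p^+)^{\ast 3}(u,v) = \frac{2\pi}{\sqrt{3}p(p-1)},
\]
with the supremum attained only on the boundary $v = 3^{1-p}u^p$ (equivalently, at $y_1 = y_2 = y_3 = u/3$). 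Combined with a matching estimate for the cross term via a double application of Cauchy--Schwarz, this would yield the desired bound on $L(g)$.

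The main obstacle is establishing the key claim, an assertion about the location of the $L^\infty$ maximum of an intricate two-dimensional integral. The restriction to $y \geq 0$ is essential: the analogous claim for the unrestricted measure $\nu_p$ would force ${\bf C}_p^6 = 2\pi/(\sqrt{3}p(p-1))$, contradicting Theorem \ref{thm:Thm3} for $p \in (1, p_0)$. A rigorous proof should proceed via the explicit formula \eqref{eq:ContinuityMatters} from \cite{OSQ18}, combined with a monotonicity analysis of the restricted convolution in the interior of its support; numerical evidence from Figure \ref{fig:graph_triple_convolution_powers} strongly supports the claim for large $p$. The case $p = 3$ is handled in \cite{FS17} via the Airy-specific convolution structure. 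Once the key claim is in place, non-existence of extremizers follows from the equality characterization: an extremizer would force simultaneous equality in the pointwise Cauchy--Schwarz (imposing $g^2/w \equiv \mathrm{const}$ on $\supp(g)$) and in the $L^\infty$ bound (forcing the triple convolution to attain its maximum only on the boundary, which corresponds to $g$ degenerating to a Dirac mass at $y=1$). The only configuration satisfying both is incompatible with $g \in L^2(\R)$; hence by Proposition \ref{prop:existence-vs-concentration-odd}, every extremizing sequence must concentrate at $\{-1, 1\}$.
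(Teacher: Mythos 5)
This statement is labelled as a \emph{conjecture} in the paper: the authors explicitly do not prove it, offering only numerical evidence (they report that trial functions $e^{-|y|^p}|y|^{(p-2)/6+a}\one_{[0,\infty)}$ approach, but never reach, the threshold $\tfrac{5\pi}{\sqrt 3 p(p-1)}$). Your proposal is likewise not a proof. The most visible gap is that your ``key claim'' --- that $\sup (w\nu_p^+)^{\ast 3}=\tfrac{2\pi}{\sqrt 3 p(p-1)}$, attained only at the boundary --- is asserted on the strength of Figure \ref{fig:graph_triple_convolution_powers} and a pointer to \cite{OSQ18}, not proved; locating the maximum of this two-dimensional integral is precisely the kind of statement the paper's machinery does not deliver for $p\geq 2$.

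There is also a structural flaw that would persist even if the key claim were established. Your reduction is correct: with $F=\widehat{g\sigma_p}=\alpha+i\beta$ one indeed gets $L(g)=\tfrac{1}{2(2\pi)^2}\int (\alpha^2+\beta^2)^2(5\alpha^2-\beta^2)$, and the conjecture amounts to $L(g)\leq \tfrac{2\pi}{\sqrt3 p(p-1)}\|g\|_{L^2}^6$ for all nonnegative $g$ supported on $[0,\infty)$. But $L(g)$ is a sum of two \emph{nonnegative} terms, $\|g\sigma_p^{\ast 3}\|_{L^2}^2$ and $\tfrac32\langle g\sigma_p^{\ast4},g\sigma_p^{\ast2}\rangle$, and your key claim plus Cauchy--Schwarz bounds the first term alone by the full budget $\tfrac{2\pi}{\sqrt3 p(p-1)}\|g\|_{L^2}^6$ --- a bound that is essentially saturated by sequences concentrating at $y_0=1$. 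Since the cross term is strictly positive whenever $\supp((g\sigma_p)^{\ast 4})$ and $\supp((g\sigma_p)^{\ast 2})$ overlap (e.g.\ for $g$ supported on an interval $[a,b]$ with $b>2a$), no ``matching estimate for the cross term'' of the form $\leq C\|g\|_{L^2}^6$ with $C\geq 0$ can close the argument: the sum of your two separate bounds necessarily exceeds the conjectured constant. Any proof must show that the two terms cannot be large \emph{simultaneously} --- i.e.\ it must exploit the cancellation encoded in the $-\beta^2$ factor that your identity correctly isolates and your subsequent steps discard. Finally, the non-existence assertion needs more than concentration of extremizing sequences: as the paper stresses after Theorem \ref{thm:Thm2}, equality in the threshold does not by itself rule out extremizers (witness $p=2$ for the even curve), so a rigidity argument showing strict inequality for every fixed $f$ would still be required.
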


\subsection{On symmetric complex- and real-valued extremizers}\label{sec:symmetry}
	The proof of Lemma \ref{lem:odd-symmetrization} merits some further remarks which we attempt to insert within a broader context. 
	
	First of all,  identity \eqref{eq:pointwise-l2} holds thanks to the symmetry with respect to the origin of both the curve $s=y\ab{y}^{p-1}$ and the measure $\d\mu_p=\ddirac{t-y\ab{y}^{p-1}}\ab{y}^{(p-2)/6}\d y\d s$.
	In fact, the proof of Lemma \ref{lem:odd-symmetrization} immediately generalizes to the Fourier extension operator associated to any {\it antipodally symmetric pair} $(\Sigma,\mu)$. By this we mean a set $\Sigma\subseteq \R^d$ (usually a smooth submanifold) together with a Borel measure $\mu$ supported on $\Sigma$,  both symmetric with respect to the origin in the sense that $T(\Sigma)=\Sigma$ and  $T^\ast\mu=\mu$, where $T$ denotes the antipodal map $T(y)=-y$ and $T^* \mu$ denotes the  pushforward measure. 
	
	Secondly, the Lebesgue exponent $6$ can be replaced with any finite exponent $r\geq 2$. More precisely, in the general context of an antipodally symmetric pair $(\Sigma,\mu)$, if an estimate 
	\begin{equation}\label{eq:generalExtension}
	\norma{\widehat{f\mu}}_{L^r(\R^d)}\lesssim\norma{f}_{L^2(\Sigma,\mu)}
	\end{equation}
	does hold for some $r\in [2,\infty)$, then necessarily\footnote{Here, a real-valued function $g: \Sigma\to\R$ is naturally defined to be  {\it even} (resp. {\it odd}) if $g(y)=g(-y)$ (resp. $g(y)=-g(-y)$), for $\mu$-almost every point $y\in\Sigma$.}
	\[ \sup_{\substack{0\neq f\in L^2(\Sigma,\mu)\\f\;\R \text{-valued}}}\frac{\norma{\widehat{f\mu}}_{L^r(\R^d)}}{\norma{f}_{L^2(\Sigma,\mu)}}= \sup_{\substack{0\neq g\in L^2(\Sigma,\mu)\\g\; \R\text{-valued, } g\text{ even or }g \text{ odd}}}\frac{\norma{\widehat{g\mu}}_{L^r(\R^d)}}{\norma{g}_{L^2(\Sigma,\mu)}}. \]

	Thirdly,  the discussion extends to the more general situation of complex-valued functions.
	 For concreteness, let us specialize to the case of the unit sphere $\Sigma=\mathbb S^{d-1}\subseteq \R^d$, $d\geq 2$, equipped with its natural surface measure $\mu$. Given an exponent $p\geq p_d:=\frac{2(d+1)}{d-1}$,  the  Tomas--Stein inequality states that
	\begin{equation}\label{eq:Thomas-Stein}
	\norma{\widehat{(u\mu)}}_{L^p(\R^d)}\lesssim_{p,d}\norma{u}_{L^2(\mathbb S^{d-1})},
	\end{equation}
	for every complex-valued function $u\in L^2(\mathbb S^{d-1})$.
It is known  \cite{FVV11,FLS16} that complex-valued extremizers for \eqref{eq:Thomas-Stein}  exist in the full range $p\geq p_d$, the endpoint existence in dimensions $d\geq 4$ being conditional on a celebrated conjecture concerning \eqref{eq:BestConstantStrSchr}. 
	Moreover, if $p\geq p_d$ is an even integer, then real-valued, even, nonnegative extremizers for \eqref{eq:Thomas-Stein} exist, by virtue of the equivalent convolution form, see \cite{CS12a,Fo15,Sh16}. 
	Finally, if $p=\infty$, then one easily checks that the unique extremizers for \eqref{eq:Thomas-Stein} are the constant functions. For general $p\geq p_d$, $p\neq\infty$, we argue that the search for extremizers of \eqref{eq:Thomas-Stein} can be restricted to the class of complex-valued, symmetric functions. Indeed, write $u=f+ig$, with $f=\Re u,\, g=\Im u$. By reorganizing the summands, we may write $u=F+iG$, where $F=f_e+ig_o$ and $G=g_e-if_o$. The functions $F,G$ are  complex-valued and symmetric, in the sense that $F(y)=\overline{F(-y)}$ and $G(y)=\overline{G(-y)}$, for every $y\in\mathbb S^{d-1}$. Moreover, one easily checks that $F(y)=\frac{1}{2}(u(y)+\overline{u(-y)}),\, G(y)=\frac{1}{2i}(u(y)-\overline{u(-y)})$,  $\norma{u}_{L^2}^2=\norma{F}_{L^2}^2+\norma{G}_{L^2}^2$ and that, in view of the antipodal symmetry of the pair $(\mathbb S^{d-1},\mu)$, the functions $\widehat{F\mu},\, \widehat{G\mu}$ are real-valued. Following the proof of Lemma \ref{lem:odd-symmetrization}, we are thus led to the following result.
	
	\begin{proposition}\label{prop:symmetrized-extremizers-sphere}
		Let $d\geq 2$ and $ \frac{2(d+1)}{d-1}\leq p\leq\infty$. Then for every complex-valued $u\in L^2(\mathbb S^{d-1}),\, u\neq 0$, the following inequality holds:
		\begin{equation}\label{eq:symmetrization-sphere}
		\frac{\norma{\widehat{(u\mu)}}_{L^p(\R^d)}}{\norma{u}_{L^2(\mathbb S^{d-1})}}\leq \sup_{0\neq F\in L_{\emph{sym}}^2(\mathbb S^{d-1})} \frac{\norma{\widehat{F\mu}}_{L^p(\R^d)}}{\norma{F}_{L^2(\mathbb S^{d-1})}},
		\end{equation}
		where $L^2_{\emph{sym}}(\mathbb S^{d-1}):=\{F\in L^2(\mathbb S^{d-1})\colon F(y)=\overline{F(-y)},\; \text{ for }\mu\text{-a.e. }y\in\mathbb S^{d-1} \}$. Moreover, if $u$ realizes equality in \eqref{eq:symmetrization-sphere}, then there exist $F\in L^2_{\emph{sym}}(\mathbb S^{d-1})$ and a constant $\kappa\in\mathbb C$ such that $u=\kappa F$, $\mu$-a.e.
	\end{proposition}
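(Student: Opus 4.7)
The plan is to emulate the symmetrization of Lemma \ref{lem:odd-symmetrization}, but with a refined decomposition adapted to the complex-valued setting. Writing $u=f+ig$ with $f=\Re u$, $g=\Im u$, and splitting each real-valued piece into its even and odd parts, $f=f_e+f_o$ and $g=g_e+g_o$, I would set
\[
F:=f_e+ig_o,\qquad G:=g_e-if_o.
\]
Direct verification yields: (i) $u=F+iG$; (ii) $F(-y)=\overline{F(y)}$ and $G(-y)=\overline{G(y)}$, so that $F,G\in L^2_{\mathrm{sym}}(\mathbb{S}^{d-1})$; and (iii) by $L^2$-orthogonality of even and odd parts, $\norma{u}_{L^2}^2=\norma{F}_{L^2}^2+\norma{G}_{L^2}^2$. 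The crucial observation is that, because $\mu$ is antipodally invariant and $F(-y)=\overline{F(y)}$, the change of variables $y\mapsto -y$ in $\overline{\widehat{F\mu}(x)}=\int e^{ix\cdot y}\overline{F(y)}\,d\mu(y)$ returns $\widehat{F\mu}(x)$; thus $\widehat{F\mu}$ is \emph{real-valued}, and the same holds for $\widehat{G\mu}$.

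Consequently $\widehat{u\mu}=\widehat{F\mu}+i\widehat{G\mu}$ is precisely the splitting of $\widehat{u\mu}$ into real and imaginary parts, so $\ab{\widehat{u\mu}}^2=\ab{\widehat{F\mu}}^2+\ab{\widehat{G\mu}}^2$ pointwise on $\R^d$. Since $p\geq p_d>2$, the triangle inequality in $L^{p/2}(\R^d)$ applied to these nonnegative functions (or the trivial bound $\sup(|A|^2+|B|^2)\leq \sup|A|^2+\sup|B|^2$ when $p=\infty$) yields
\[
\norma{\widehat{u\mu}}_{L^p(\R^d)}^2\leq \norma{\widehat{F\mu}}_{L^p(\R^d)}^2+\norma{\widehat{G\mu}}_{L^p(\R^d)}^2.
\]
Dividing by $\norma{u}_{L^2}^2=\norma{F}_{L^2}^2+\norma{G}_{L^2}^2$ and combining with the elementary inequality $(a_1+a_2)/(b_1+b_2)\leq\max\{a_1/b_1,a_2/b_2\}$, valid for positive $b_j$, immediately produces \eqref{eq:symmetrization-sphere}.

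The main technical obstacle is the equality case, where one must upgrade a pointwise \emph{modulus} identity to a signed one. Assume equality in \eqref{eq:symmetrization-sphere}; the subcases $F\equiv 0$ and $G\equiv 0$ yield $u=\kappa F'$ with $\kappa\in\{1,i\}$ at once, so suppose both $F,G$ are nonzero. Tracing equalities through the chain above and invoking the first part of the proposition to identify the symmetric sup with the full Tomas--Stein constant, equality must hold in the $L^{p/2}$ triangle inequality; for $1<p/2<\infty$, this forces the proportionality $\ab{\widehat{G\mu}}^2=\lambda\ab{\widehat{F\mu}}^2$ on $\R^d$, for some $\lambda>0$. Since $\widehat{F\mu}$ and $\widehat{G\mu}$ extend to entire functions on $\Co^d$ (as Fourier transforms of the compactly supported measures $F\mu$ and $G\mu$) and are real-valued on $\R^d$, the factorization
\[
\bigl(\widehat{F\mu}-\sqrt{\lambda}\,\widehat{G\mu}\bigr)\bigl(\widehat{F\mu}+\sqrt{\lambda}\,\widehat{G\mu}\bigr)=\ab{\widehat{F\mu}}^2-\lambda\ab{\widehat{G\mu}}^2\equiv 0,
\]
together with the connectedness of $\R^d$, forces one of the two analytic factors to vanish identically. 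Fourier uniqueness then gives $\widehat{G\mu}=\pm\sqrt{\lambda}\,\widehat{F\mu}$, and hence $G=\pm\sqrt{\lambda}\,F$ in $L^2(\mathbb{S}^{d-1},\mu)$. Therefore
\[
u=F+iG=(1\pm i\sqrt{\lambda})\,F=\kappa F,\qquad \kappa\in\Co,
\]
as required. The endpoint $p=\infty$ should be handled separately using the classical fact that constants are the unique extremizers at that exponent, so $u$ must already be a complex constant, trivially a scalar multiple of the real constant $1\in L^2_{\mathrm{sym}}$.
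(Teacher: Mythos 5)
Your proposal is correct. The first part (the decomposition $u=F+iG$ with $F=f_e+ig_o$, $G=g_e-if_o$, the verification that $\widehat{F\mu},\widehat{G\mu}$ are real-valued, and the $L^{p/2}$ triangle inequality followed by the mediant bound) is exactly the paper's argument. Where you genuinely diverge is in the equality case. The paper, after extracting $\ab{\widehat{F\mu}}=\la\ab{\widehat{G\mu}}$ everywhere, fixes a point $\xi_0$ where neither transform vanishes, arranges signs so that the identity holds without absolute values on a small ball around $\xi_0$, and then expands $e^{-iy\cdot\xi_0}F$ and $e^{-iy\cdot\xi_0}G$ in spherical harmonics, matching coefficients via the Bessel-function formula for $\widehat{Y_{n,k}\mu}$ and orthogonality on $\mathbb S^{d-1}$. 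You instead observe that $\widehat{F\mu},\widehat{G\mu}$ are real-valued and real-analytic on the connected set $\R^d$, so the identity $(\widehat{F\mu})^2=c\,(\widehat{G\mu})^2$ factors as a product of two real-analytic functions vanishing identically, whence one factor vanishes by the identity theorem; Fourier uniqueness then gives $G=\pm\sqrt{c}\,F$. This is cleaner and, notably, is essentially the generalization the paper itself sketches immediately afterwards (via the expansion \eqref{eq:analyticExp}) to handle arbitrary antipodally symmetric pairs $(\Sigma,\mu)$ with compactly supported finite measure, so nothing sphere-specific is lost. Two cosmetic points: your display has a $\la$ versus $\la^{-1}$ bookkeeping slip (you assert $\ab{\widehat{G\mu}}^2=\la\ab{\widehat{F\mu}}^2$ but then factor $\ab{\widehat{F\mu}}^2-\la\ab{\widehat{G\mu}}^2$), which is harmless; and your treatment of $p=\infty$ via uniqueness of constant extremizers matches the paper's reduction to $p<\infty$.
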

	\begin{proof}
		In light of the previous discussion, we can assume $p<\infty$, and only the last statement merits further justification. Suppose that $u$ realizes equality in \eqref{eq:symmetrization-sphere}. 
		In particular, $u$ is a complex-valued extremizer for \eqref{eq:Thomas-Stein}. 
		Decompose $u=F+iG$ as before, with $F(y)=\frac{1}{2}(u(y)+\overline{u(-y)})$, $G=\frac{1}{2i}(u(y)-\overline{u(-y)})$, so that $F,G\in L^2_{\text{sym}}(\mathbb S^{d-1})$. If either $F\equiv 0$ or $G\equiv 0$, then there is nothing to prove, and so in what follows we assume $F,G$ not to be identically zero. Following the proof of Lemma \ref{lem:odd-symmetrization}, we note that equality occurs in the application of the triangle inequality with respect to the $L^{p/2}(\R^d)$-norm (recall that $p/2> 1$ is finite) only if there exists $\la>0$, such that\footnote{As Fourier transforms of compactly supported distributions, both sides of \eqref{eq:pointwise-equality-fourier} coincide with the absolute value of real-valued, {\it smooth} functions, so that the pointwise equality occurs at every point, and not just almost everywhere.}
		\begin{equation}\label{eq:pointwise-equality-fourier}
		\ab{\widehat{F\mu}(\xi)}=\la\ab{\widehat{G\mu}(\xi)}, \text{ for every }\xi\in\R^d.
		\end{equation}
Subsequent cases of equality further imply
		\[ \frac{\norma{\widehat{(u\mu)}}_{L^p(\R^d)}}{\norma{u}_{L^2(\mathbb S^{d-1})}}= \frac{\norma{\widehat{F\mu}}_{L^p(\R^d)}}{\norma{F}_{L^2(\mathbb S^{d-1})}}=\frac{\norma{\widehat{G\mu}}_{L^p(\R^d)}}{\norma{G}_{L^2(\mathbb S^{d-1})}},  \]
		and so the functions $F, G$ are also extremizers for \eqref{eq:Thomas-Stein}. It suffices to show that $F=\kappa G$, where $\kappa\in\{-\la,\la\}$. Recall that $\widehat{F\mu},\widehat{G\mu}$ are real-valued functions, since $F,G\in L^2_{\text{sym}}(\mathbb S^{d-1})$. Let $\xi_0\in\R^d$ be such that $\ab{\widehat{F\mu}(\xi_0)}\neq 0$.
		We lose no generality in assuming that $\widehat{F\mu}(\xi_0)>0$ and $\widehat{G\mu}(\xi_0)>0$, for otherwise we could replace $F$ by $-F$, or $G$ by $-G$. By continuity, there exists $r_0>0$, such that 
		\begin{equation}\label{eq:pointwise-equality-fourier2}
		\widehat{F\mu}(\xi+\xi_0)=\la\widehat{G\mu}(\xi+\xi_0), \text{ for every }\ab{\xi}<r_0.
		\end{equation}
		On the other hand, $\widehat{F\mu}(\xi+\xi_0)=(e^{-iy\cdot \xi_0}F\mu)\widehat{\;}(\xi)$ and $\widehat{G\mu}(\xi+\xi_0)=(e^{-iy\cdot \xi_0}G\mu)\widehat{\;}(\xi)$. The functions $e^{-iy\cdot \xi_0}F$ and $e^{-iy\cdot \xi_0}G$ belong to $L^2_{\text{sym}}(\mathbb S^{d-1})$, and may be expanded in the basis of spherical harmonics,
		\begin{equation}\label{eq:SphHarmoDecomposition}
		e^{-iy\cdot \xi_0}F=\sum_{n=0}^{\infty}\sum_{k=1}^{\gamma(d,n)}a_{n,k}Y_{n,k},\text{ and }  e^{-iy\cdot \xi_0}G=\sum_{n=0}^{\infty}\sum_{k=1}^{\gamma(d,n)}b_{n,k}Y_{n,k}.
		\end{equation}
		Here, $\{Y_{n,k}\}_{k=1}^{\gamma(d,n)}$ denotes a basis for the space of spherical harmonics of degree $n$ in the sphere $\mathbb{S}^{d-1}$, which has dimension  $\gamma(d,n):=\binom{d+n-1}{n}-\binom{d+n-3}{n-2}$, see \cite[Chapter IV]{SW71}. The coefficients $a_{n,k},b_{n,k}$ are complex numbers. Applying the Fourier transform to \eqref{eq:SphHarmoDecomposition}, we find that
		\begin{equation}\label{eq:FormulaFourierTransY}
		\begin{split}
		\widehat{F\mu}(\xi+\xi_0)&=(2\pi)^{\frac d2}\sum_{n=0}^{\infty}\sum_{k=1}^{\gamma(d,n)}a_{n,k}i^{-n}\ab{\xi}^{-\frac d2+1}J_{\frac d2-1+n}(\ab{\xi})Y_{n,k}\Bigl(\frac{\xi}{\ab{\xi}}\Bigr),\\
		\widehat{G\mu}(\xi+\xi_0)&=(2\pi)^{\frac d2}\sum_{n=0}^{\infty}\sum_{k=1}^{\gamma(d,n)}b_{n,k}i^{-n}\ab{\xi}^{-\frac d2+1}J_{\frac d2-1+n}(\ab{\xi})Y_{n,k}\Bigl(\frac{\xi}{\ab{\xi}}\Bigr).
		\end{split}
		\end{equation}
		Using \eqref{eq:pointwise-equality-fourier2} and \eqref{eq:FormulaFourierTransY} together with the orthogonality of the functions $\{Y_{n,k}\}$ in $L^2(\mathbb S^{d-1})$,  we obtain  
		$$a_{n,k}r^{-\frac d2+1}J_{\frac d2-1+n}(r)=\la b_{n,k}r^{-\frac d2+1}J_{\frac d2-1+n}(r), \text{ for every } r\in (0,r_0).$$ 
		In particular,  $a_{n,k}=\la b_{n,k}$. This and \eqref{eq:SphHarmoDecomposition} together imply $F=\la G$.
		\end{proof}
		A similar result to Proposition \ref{prop:symmetrized-extremizers-sphere} holds for a broader class of antipodally symmetric pairs $(\Sigma,\mu)$. 
			Indeed, let $r\in[2,\infty)$ be such that the extension estimate \eqref{eq:generalExtension} holds. Then
	\begin{equation}\label{eq:symetrizationGeneralPairs}
	\sup_{0\neq u\in L^2(\Sigma,\mu)}\frac{\norma{\widehat{(u\mu)}}_{L^r(\R^d)}}{\norma{u}_{L^2(\Sigma,\mu)}}= \sup_{0\neq F\in L_{\text{sym}}^2(\Sigma,\mu)} \frac{\norma{\widehat{F\mu}}_{L^r(\R^d)}}{\norma{F}_{L^2(\Sigma,\mu)}},
	\end{equation}
	with the obvious definition of $L_{\text{sym}}^2(\Sigma,\mu)$. Moreover, if $\mu$ is compactly supported and finite, then any complex extremizer $u$ for \eqref{eq:generalExtension} necessarily coincides with a multiple of a symmetric extremizer $F\in L^2_{\text{sym}}(\Sigma,\mu)$. 
	Regarding the second part of Proposition \ref{prop:symmetrized-extremizers-sphere}, the previous proof used the particular geometry of the sphere, but it can be modified to handle this more general situation. The crux of the matter is the fact that  the Fourier transform of a compactly supported finite measure is real analytic. Indeed, if $\mu$ is a positive, compactly supported finite measure, and $F\in L^2(\Sigma,\mu)$, then, for every $\xi_0\in\R^d$,
	\begin{equation}\label{eq:analyticExp}
	\begin{split}
	\widehat{F\mu}(\xi)&=\int_{\Sigma}e^{-i\xi\cdot y}F(y)\d \mu(y)=\int_{\Sigma}e^{-i(\xi-\xi_0)\cdot y}e^{-i\xi_0\cdot y}F(y)\d \mu(y)\\
	&=\sum_{k=0}^{\infty} \frac{(-i)^k}{k!}\int_{\Sigma}\big((\xi-\xi_0)\cdot y\big)^k e^{-i\xi_0\cdot y}F(y)\d \mu(y),
	\end{split}
	\end{equation}
	where the convergence is locally uniform. To see this, note  the following tail estimate,
	\[\NOrma{\sum_{k=K}^{\infty}\frac{(-i)^k}{k!}\int_{\Sigma}((\xi-\xi_0)\cdot y)^k e^{-i\xi_0\cdot y}F(y)\d \mu(y)}_{L_\xi^\infty(\Omega)}\leq \mu(\Sigma)^{\frac12}\norma{F}_{L^2(\Sigma,\mu)}\sum_{k=K}^{\infty}\frac{s^k}{k!},\]
	  which holds for every compact subset $\Omega\subseteq\R^d$ and every $K\in\N$.
	 Here, $s=\sup_{\xi\in\Omega,\,y\in\Sigma}\ab{\xi-\xi_0}\ab{y}<\infty$. Therefore, the analogue of \eqref{eq:pointwise-equality-fourier} in this setting leads to the corresponding \eqref{eq:pointwise-equality-fourier2}, which by analyticity of \eqref{eq:analyticExp} implies $\widehat{F\mu}=\la\widehat{G\mu}$, and therefore $F=\la G$.

These observations can be of interest when combined with the main result of \cite{FVV11}, which states that complex-valued extremizers exist in the non-endpoint setting, provided $\mu$ is a positive, compactly supported finite measure.
	Important cases of antipodally symmetric pairs $(\Sigma,\mu)$ which have attracted recent attention include the aforementioned case of spheres, together with ellipsoids equipped with surface measure, and the double cone, the one- and the two-sheeted hyperboloids equipped with their natural Lorentz invariant measures, 
	see \cite{FOS17} and the references therein.\\

We end this section with a final remark on the multiplier form of inequality \eqref{eq:OddSharpExtensionFormGenP}.
Consider the Cauchy problem
\begin{equation}\label{eq:pthOrderOddSchroedinger}
\begin{cases}
\partial_t u-\ab{\partial_x}^{p-1}\partial_x u=0,\quad (x,t)\in\R\times\R,\\
u(\cdot,0)=f\in L_x^2(\R),
\end{cases} 
\end{equation}
whose solution can be written in terms of the propagator
\begin{equation}\label{eq:evolutionOdd} u(x,t)=e^{t\ab{\partial_x}^{p-1}\partial_x}f(x)=\frac{1}{2\pi}\int_{\R}e^{ix\xi}e^{it\xi\ab{\xi}^{p-1}}\widehat{f}(\xi)\d\xi.
\end{equation}
In view of \eqref{eq:OddSharpExtensionFormGenP}, and more generally of \cite[Theorem 2.1]{KPV91}, this satisfies the mixed norm estimate
\[ \norma{|D|^{\frac{p-2}r}e^{t\ab{\partial_x}^{p-1}\partial_x}f}_{L^r_tL^s_{x}(\R^{1+1})}\lesssim_{r,s}\norma{f}_{L^2(\R)}, \]
whenever the Lebesgue exponents $r,s$ are such that $\frac{2}{r}+\frac{1}{s}=\frac{1}{2}$. 

In this context, as noted in \cite{FS17,Sh09} for the case $p=3$, it makes sense to distinguish between real-valued and general complex-valued $L^2$ initial data. This is because the evolution $e^{t\ab{\partial_x}^{p-1}\partial_x}$ preserves real-valuedness.
In other words, if $f$ is real valued, then so is $e^{t\ab{\partial_x}^{p-1}\partial_x}f$, for every $t\in\R$. In fact, if $f$ is real-valued, then $\overline{\widehat{f}(-\xi)}=\widehat{f}(\xi)$, and so taking the complex conjugate of \eqref{eq:evolutionOdd} reveals that $\overline{u(x,t)}=u(x,t)$. The operator $|D|^{\frac{p-2}{r}}e^{t\ab{\partial_x}^{p-1}\partial_x}$  is seen to preserve real-valuedness in a similar way.

It is then natural to consider the following family of sharp inequalities, for real- and complex-valued initial data and admissible Lebesgue exponents $r,s$:
\begin{align}
\label{eq:complex-case}
\norma{|D|^{\frac{p-2}r}e^{t\ab{\partial_x}^{p-1}\partial_x}u}_{L^r_tL^s_x(\R^{1+1})}\leq \mathbf{M}_{p,r,s}(\mathbb C)\norma{u}_{L^2(\R)},\\
\label{eq:real-case}
\norma{|D|^{\frac{p-2}r}e^{t\ab{\partial_x}^{p-1}\partial_x}f}_{L^r_{t}L^s_x(\R^{1+1})}\leq \mathbf{M}_{p,r,s}(\R)\norma{f}_{L^2(\R)},
\end{align}
where $u:\R\to\Co$ is complex-valued and $f:\R\to\R$ is real-valued. The study of extremizers for \eqref{eq:complex-case}--\eqref{eq:real-case} in the Airy--Strichartz case $p=3$ has been considered in \cite{FV18, FS17, HS12, Sh09}.
It would be interesting to determine whether the methods developed in the present paper can be adapted to the study of extremizers for \eqref{eq:complex-case}--\eqref{eq:real-case} in the mixed norm case $r\neq s$, so as to obtain an alternative approach to  profile decomposition  or the missing mass method. We do not pursue these matters here. However, we would still like to point out two interesting features of this problem which are easily derived from our previous analysis, and are the content of the following result.
\begin{proposition}\label{prop:two-comments}
	Let $p>1$, and $r,s\in (2,\infty)$ be such that $\mathbf{M}_{p,r,s}(\mathbb C)$ and $\mathbf{M}_{p,r,s}(\R)$ are finite. Then $\mathbf{M}_{p,r,s}(\mathbb C)=\mathbf{M}_{p,r,s}(\R)$. Moreover, if a complex-valued extremizer $u$ for $\mathbf{M}_{p,r,s}(\mathbb C)$ exists, then there exist $\kappa\in\mathbb C$ and a real-valued extremizer $f$ for $\mathbf{M}_{p,r,s}(\mathbb R)$, such that $u=\kappa f$.
\end{proposition}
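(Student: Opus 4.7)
The plan is to follow the strategy of Lemma~\ref{lem:odd-symmetrization} and Proposition~\ref{prop:symmetrized-extremizers-sphere}, exploiting that the propagator $e^{t\ab{\partial_x}^{p-1}\partial_x}$, and hence the operator $T:=|D|^{\frac{p-2}{r}}e^{t\ab{\partial_x}^{p-1}\partial_x}$, preserves real-valuedness. The inequality $\mathbf{M}_{p,r,s}(\R)\leq \mathbf{M}_{p,r,s}(\Co)$ is immediate. For the reverse, decompose any nonzero $u\in L^2(\R)$ as $u=f+ig$ with $f,g:\R\to\R$. Then $U:=Tu$ splits as $U=F+iG$ with $F,G$ real-valued, so that $|U|^2=F^2+G^2$ pointwise. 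Since $r,s>2$, the mixed-norm $L^{r/2}_tL^{s/2}_x$ triangle inequality yields
\[
\norma{U}_{L^r_tL^s_x}^2=\norma{F^2+G^2}_{L^{r/2}_tL^{s/2}_x}\leq \norma{F}_{L^r_tL^s_x}^2+\norma{G}_{L^r_tL^s_x}^2,
\]
which, combined with $\norma{u}_{L^2}^2=\norma{f}_{L^2}^2+\norma{g}_{L^2}^2$, implies $\mathbf{M}_{p,r,s}(\Co)\leq \mathbf{M}_{p,r,s}(\R)$ upon taking suprema.

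For the structural statement, suppose $u=f+ig$ is a complex-valued extremizer. Tracing through the equality cases above, strict convexity of $L^{r/2}_tL^{s/2}_x$ for $r/2,s/2>1$, applied to the nonnegative functions $F^2$ and $G^2$, forces $F^2=\lambda G^2$ almost everywhere on $\R\times\R$ for some constant $\lambda\geq 0$. The degenerate cases $\lambda=0$ or $G\equiv 0$ are handled directly via Fourier uniqueness, so assume $\lambda>0$. The goal is to conclude $f=\pm\sqrt{\lambda}\,g$, which rewrites $u=(\pm\sqrt{\lambda}+i)g$ as a complex multiple $u=\kappa g$ of the real-valued function $g$; since $\mathbf{M}_{p,r,s}(\Co)=\mathbf{M}_{p,r,s}(\R)$, the function $g$ automatically extremizes the real-valued inequality.

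The main obstacle is upgrading the almost-everywhere factorization $(F-\sqrt{\lambda}\,G)(F+\sqrt{\lambda}\,G)=0$ into the global assertion that one of the two factors vanishes identically on $\R\times\R$. The natural tool is real-analyticity of $F$ and $G$. A bootstrap argument along the lines of Theorem~\ref{thm:Thm4}, adapted to the Euler--Lagrange equation associated to \eqref{eq:complex-case}, should establish super-exponential $L^2$-decay of any extremizer, so that $\widehat{f}$ and $\widehat{g}$ extend to entire functions and, consequently, $F$ and $G$ are real-analytic on $\R\times\R$. Given this analyticity, the real-analytic function $F^2-\lambda G^2$ vanishes almost everywhere and hence identically; connectedness of $\R^2$ together with the fact that the zero set of a non-trivial real-analytic function is nowhere dense then force one of $F\mp\sqrt{\lambda}\,G$ to be identically zero, and Fourier uniqueness yields $f=\pm\sqrt{\lambda}\,g$, completing the proof.
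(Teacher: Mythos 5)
Your proof of the equality $\mathbf{M}_{p,r,s}(\mathbb C)=\mathbf{M}_{p,r,s}(\R)$ is correct and is exactly the paper's argument: split $u=f+ig$, use the pointwise Pythagorean identity for the real-valuedness-preserving propagator, and apply Minkowski in $L^{s/2}_x$ and $L^{r/2}_t$. Your reduction of the structural statement to the a.e.\ identity $|Tf|=\la|Tg|$ with a single constant $\la>0$ (equality in both applications of Minkowski with exponents $>1$) is also the same as in the paper, where $T=|D|^{\frac{p-2}{r}}e^{t\ab{\partial_x}^{p-1}\partial_x}$.

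The gap is in how you upgrade the pointwise factorization $(F-\la G)(F+\la G)=0$ to a single global sign. Your route requires $F=Tf$ and $G=Tg$ to be real-analytic on $\R^2$, which you propose to obtain from a super-exponential decay result ``along the lines of Theorem \ref{thm:Thm4}, adapted to the Euler--Lagrange equation associated to \eqref{eq:complex-case}.'' No such result is available: Theorem \ref{thm:Thm4} is proved only for the pure-norm $L^6$ inequality, and its proof hinges on the $6$-linear form $Q$ and the convolution structure specific to that exponent. The Euler--Lagrange equation for the mixed norm $L^r_tL^s_x$ with $r\neq s$ has a genuinely different (non-multilinear) structure, and the paper explicitly states that it does not pursue the adaptation of its methods to that setting. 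So your argument, as written, rests on an unproven ingredient precisely in the generality ($r,s\in(2,\infty)$ arbitrary admissible) that the proposition claims. Note also that without some decay of $\widehat f$ the analyticity of $Tf$ in $(x,t)$ simply fails in general, since the relevant measure on the curve $s=\xi\ab{\xi}^{p-1}$ is not compactly supported; so the analyticity step cannot be sidestepped.

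The paper avoids regularity altogether. Squaring $|Tf|=\la|Tg|$ and taking Fourier transforms converts the identity into an equality of two-fold convolutions of measures carried by the odd curve. Since the map $(y_1,y_2)\mapsto(y_1+y_2,\psi(y_1)+\psi(y_2))$ is essentially two-to-one onto the interior of $\supp(\mu_p\ast\mu_p)$ (cf.\ \eqref{eq:altFormulaConv}), equating densities yields the tensorized identity $\widehat f(y_1)\widehat f(y_2)=\la^2\,\widehat g(y_1)\widehat g(y_2)$ for a.e.\ $(y_1,y_2)\in\R^2$. Integrating in each variable over a compact set $J$ with $\int_J\widehat g\neq 0$ then pins down a single global sign, giving $\widehat f=\pm\la\widehat g$ and hence $u=(\pm\la+i)g$. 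You should replace the analyticity step by this convolution argument (or supply a proof of decay/analyticity for mixed-norm extremizers, which is a substantial open task).
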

\noindent The problem of the relationship between arbitrary complex-valued
extremizers and real-valued extremizers has been considered in the literature, see e.g. \cite{CS12b} for the case of the Tomas--Stein inequality on the sphere $\mathbb{S}^2$.
Note the duality with the second statement of Proposition \ref{prop:symmetrized-extremizers-sphere} above. 
\begin{proof}[Proof of Proposition \ref{prop:two-comments}]
The equality $\mathbf{M}_{p,r,s}(\mathbb C)=\mathbf{M}_{p,r,s}(\R)$ follows the same lines of the proof of Lemma \ref{lem:odd-symmetrization}.
To see why this is the case, let $u\in L^2(\R)$ and write $u=f+ig$, where $f$ and $g$ are the real and imaginary parts of $u$, and hence real-valued. Therefore
\begin{gather}
\label{eq:l2-norm-mixed}
\norma{u}_{L^2}^2=\norma{f}_{L^2}^2+\norma{g}_{L^2}^2,\\
\label{eq:pointwise-mixed}
\ab{|D|^{\frac{p-2}r}e^{t\ab{\partial_x}^{p-1}\partial_x}u(x)}^2=\ab{|D|^{\frac{p-2}r}e^{t\ab{\partial_x}^{p-1}\partial_x}f(x)}^2+\ab{|D|^{\frac{p-2}r}e^{t\ab{\partial_x}^{p-1}\partial_x}g(x)}^2, 
\end{gather}
for every $(x,t)\in\R^2$.
If $r,s\geq 2$, then we can use the triangle inequality for the $L^{{s}/{2}}_x$- and the $L^{{r}/{2}}_t$-norms applied to \eqref{eq:pointwise-mixed}, and obtain
\begin{equation}\label{eq:inequality-norm-mixed}
\norma{|D|^{\frac{p-2}r}e^{t\ab{\partial_x}^{p-1}\partial_x}u}_{L^r_tL^s_x}^2\leq \norma{|D|^{\frac{p-2}r}e^{t\ab{\partial_x}^{p-1}\partial_x}f}_{L^r_tL^s_x}^2+\norma{|D|^{\frac{p-2}r}e^{t\ab{\partial_x}^{p-1}\partial_x}g}_{L^r_tL^s_x}^2. 
\end{equation}
Without loss of generality, assume that $f,g$ are not identically zero. Reasoning as in the proof of Lemma \ref{lem:odd-symmetrization} yields
\begin{equation}\label{eq:complex-real-inequality}
\frac{\norma{|D|^{\frac{p-2}r}e^{t\ab{\partial_x}^{p-1}\partial_x}u}_{L^r_tL^s_x}^2}{\norma{u}_{L^2}^2}
\leq \max\Big\{\frac{\norma{|D|^{\frac{p-2}r}e^{t\ab{\partial_x}^{p-1}\partial_x}f}_{L^r_tL^s_x}^2}{\norma{f}_{L^2}^2},\frac{\norma{|D|^{\frac{p-2}r}e^{t\ab{\partial_x}^{p-1}\partial_x}g}_{L^r_tL^s_x}^2}{\norma{g}_{L^2}^2}\Big\},
\end{equation}
and therefore $\mathbf{M}_{p,r,s}(\mathbb C)\leq \mathbf{M}_{p,r,s}(\R)$. The reverse inequality is immediate. 
We gratefully acknowledge recent personal communication with  R. Frank and J. Sabin, who independently arrived at a similar conclusion in the context of \cite{FS17}.

We proceed to show that an arbitrary complex-valued extremizer for $\mathbf{M}_{p,r,s}(\mathbb C)$ necessarily coincides with a constant multiple of a real-valued extremizer for $\mathbf{M}_{p,r,s}(\R)$. 
Let $r,s\in(2,\infty)$, and suppose that $u$ is a complex-valued extremizer for $\mathbf{M}_{p,r,s}(\mathbb C)$, which we express as the sum of its real and imaginary parts, $u=f+ig$.  An inspection of the chain of inequalities leading to \eqref{eq:complex-real-inequality} shows that one of the following alternatives must hold:
\begin{itemize}
	\item $g=0$ and $u=f$ is a real-valued extremizer, or
	\item $f=0$, $u=ig$, and $g$ is a real-valued extremizer, or
	\item $f,g$ are both not identically zero, and 
	\begin{equation}\label{eq:real-imaginary-part-maximize}
	\frac{\norma{|D|^{\frac{p-2}r}e^{t\ab{\partial_x}^{p-1}\partial_x}f}_{L^r_t L^s_x}^2}{\norma{f}_{L^2}^2}=\frac{\norma{|D|^{\frac{p-2}r}e^{t\ab{\partial_x}^{p-1}\partial_x} g }_{L^r_t L^s_x}^2}{\norma{g}_{L^2}^2}=\mathbf{M}_{p,r,s}(\R),
	\end{equation}
	so that $f,g$ are real-valued extremizers.
\end{itemize}
It suffices to analyze the latter case. An inspection of the chain of inequalities leading to \eqref{eq:inequality-norm-mixed} shows that equality must hold in both applications of the triangle inequality. Since $r,s\in(2,\infty)$, this implies the existence of $\la>0$, such that
\begin{equation}\label{eq1}
\ab{|D|^{\frac{p-2}r}e^{t\ab{\partial_x}^{p-1}\partial_x}f(x)} = \la \ab{|D|^{\frac{p-2}r}e^{t\ab{\partial_x}^{p-1}\partial_x} g(x)}, \text{ for almost every } (x,t)\in\R^2. 
\end{equation}
Equality in \eqref{eq:real-imaginary-part-maximize} then implies $\norma{f}_{L^2}=\la\norma{g}_{L^2}$. 
By squaring \eqref{eq1}, and applying the Fourier transform, the equality of the resulting convolutions can be recast as follows:
\begin{multline}\label{eq:deltaEquality}
 \int_{\R^2}\widehat{f}(y_1)\widehat{f}(y_2)\ddirac{t-\psi(y_1)-\psi(y_2)}\ddirac{x-y_1-y_2}\ab{y_1 y_2}^{\frac{p-2}r}\d y_1\d y_2\\
 =\la^2\int_{\R^2}\widehat{g}(y_1)\widehat{g}(y_2)\ddirac{t-\psi(y_1)-\psi(y_2)}\ddirac{x-y_1-y_2}\ab{y_1y_2}^{\frac{p-2}r}\d y_1\d y_2,
\end{multline}
where $(x,t)\in\R^2$ and $\psi(y):=y\ab{y}^{p-1}$. Considering points $(x,t)$ in the interior of the support of the convolution measure $\mu_p\ast \mu_p$, i.e. satisfying $t>2\psi(\tfrac{x}{2})$ for $x>0$, and $t<2\psi(\tfrac{x}{2})$ for $x<0$, we see that there exists a unique positive solution 
$\alpha=\alpha(x,t)>0$ of
\begin{equation}\label{eq:equation-alpha}
t=\psi(\tfrac{x}{2}-\alpha(x,t))+\psi(\tfrac{x}{2}+\alpha(x,t)),
\end{equation}
and hence that the system of equations $t=\psi(y_1)+\psi(y_2),\, x=y_1+y_2$ has unique solutions 
$$(y_1,y_2)\in\{(\tfrac{x}{2}-\alpha(x,t),\tfrac{x}{2}+\alpha(x,t)),(\tfrac{x}{2}+\alpha(x,t),\tfrac{x}{2}-\alpha(x,t))\}.$$ 
From \eqref{eq:deltaEquality} and a similar reasoning to that of \cite[Proposition 2.1 and Remark 2.3]{OSQ18}, it then follows that
\[ \widehat{f}(\tfrac{x}{2}-\alpha(x,t))\widehat{f}(\tfrac{x}{2}+\alpha(x,t))=\la^2\widehat{g}(\tfrac{x}{2}-\alpha(x,t))\widehat{g}(\tfrac{x}{2}+\alpha(x,t)), \]
for almost every $(x,t)\in\supp(\mu_p\ast\mu_p)$. 
Alternatively, the latter identity follows by considering the analogue of formula \eqref{eq:altFormulaConv} obtained in the case of even curves, which by the previous discussion  applies to the present scenario as well.
This yields
\begin{equation}\label{eq:pointwiseEqualityProd}
\widehat{f}(x)\widehat{f}(x')=\la^2\widehat{g}(x)\widehat{g}(x'),
\end{equation}
for almost every $(x,x')\in\R^2$. 
As $\widehat{f},\widehat{g}$ belong to $L^2(\R)$, we may integrate over any compact subset $I\subset\R$ in both variables $x,x'$, and obtain
\begin{equation}\label{eq:IntergralOverI}
\Bigl(\int_I \widehat{f}(x)\d x\Bigr)^2=\la^2\Bigl(\int_I \widehat{g}(x)\d x\Bigr)^2.
\end{equation}
Choose a compact subset $J\subset\R$ for which $\int_{J} \widehat{g}(x)\d x\neq 0$.
From \eqref{eq:IntergralOverI}, we have that
	\begin{equation}\label{eq:IntergralOverJ}
	 \int_J \widehat{f}(x)\d x=\la \int_J \widehat{g}(x)\d x, \text{ or } \int_J \widehat{f}(x)\d x=-\la \int_J \widehat{g}(x)\d x.
	 \end{equation}
 Integrating both sides of \eqref{eq:pointwiseEqualityProd} over $x'\in J$, one infers from \eqref{eq:IntergralOverJ} that either $\widehat{f}=\la \widehat{g}$ or $\widehat{f}=-\la \widehat{g}$, and therefore that either $f=\la g$ or $f=-\la g$. 
 The conclusion is that there exists $\la>0$ such that either $u=(\la+i)g$ or $u=(-\la+i)g$, and so  $u$ is a constant multiple of a real-valued extremizer, as desired.
 \end{proof}

\section*{Acknowledgements}
We are grateful to Rupert Frank and Julien Sabin for discussions surrounding symmetric complex- and real-valued extremizers,
and to Rupert Frank for clarifying some historical remarks.

\appendix
\section{Concentration-compactness}
This appendix consists of a useful observation regarding Lions' concentration-compactness lemma \cite{Li84}.
Let us start with some general considerations.
Let $(X,\mathcal B,\mu)$ be a measure space with a {\it distinguished} point $\bar{x}\in X$, such that $\{\bar{x}\}\in \mathcal B$ and 
$\mu(\{\bar{x}\})=0$. Set $X_{\bar{x}}:=X\setminus\{\bar{x}\}$. 
Let $\vrho:X_{\bar{x}}\times X_{\bar{x}}\to[0,\infty)$ be a  pseudometric on $X_{\bar{x}}$, i.e.
a measurable 
function on $X_{\bar{x}}\times X_{\bar{x}}$ satisfying $\vrho(x,x)=0$, $\vrho(x,y)=\vrho(y,x)$, and 
$\vrho(x,y)\leq 
\vrho(x,z)+\vrho(z,y)$, for every $x,y,z\in X_{\bar{x}}$. 
Define the ball of center $x\in X_{\bar{x}}$ and radius $r\geq 0$, $B(x,r):=\{y\in X_{\bar{x}}\colon \vrho(x,y)\leq r\}$, and its complement 
$B(x,r)^\complement:=X\setminus B(x,r)$. It is clear that  $X_{\bar{x}}=\bigcup_{r\geq 0}B(x,r)$, for every $x\neq \bar{x}$. 
We have the following concentration-compactness result, which should be compared to  \cite[Lemma I.1]{Li84}.
\begin{proposition}\label{prop:metric-concentration-compactness-lemma}
	Let $(X,\mathcal B, \mu),\bar{x}\in X,\vrho:X_{\bar{x}}\times X_{\bar{x}}\to[0,\infty)$ be as above. 
	Let $\{\rho_n\}$ be a sequence in $L^1(X,\mu)$ 
	satisfying: 
	\[\rho_n\geq 0\text{ in }X,\qquad\int_{X}\rho_n\d\mu=\la,\]
	where $\la>0$ is fixed.
        Then there exists a subsequence $\{\rho_{n_k}\}$ satisfying 
	one of the following three possibilities:
	\begin{enumerate}
		\item [(i)] (Compactness) There exists $\{x_k\}\subset X_{\bar{x}}$ such that $\rho_{n_k}(\cdot+x_k)$ is tight i.e.:
				\[\forall \eps>0,\, \exists R<\infty:\;\int_{B(x_k,R)}\rho_{n_k} \d\mu\geq \la-\eps;\]
		\item [(ii)] (Vanishing) 
		$\ds\lim\limits_{k\to\infty}\sup_{y\in X_{\bar{x}}}\int_{B(y,R)}\rho_{n_k}\d\mu=0$,
		for all $R<\infty$;
		\item [(iii)] (Dichotomy) There exists $\alpha\in(0,\la)$ with the following property.
		For every $\eps>0$, there exist $R\in [0,\infty)$, $k_0\geq 1$, and
		nonnegative functions $\rho_{k,1},\rho_{k,2}\in L^1(X,\mu)$ such that, for every $k\geq k_0$,
		\begin{gather*}
		\norma{\rho_{n_k}-(\rho_{k,1}+\rho_{k,2})}_{L^1(X)}\leq \eps,\;\;\;
		\abs{\int_{X}\rho_{k,1}\d\mu-\alpha}\leq
		\eps,\;\;\,\abs{\int_{X}\rho_{k,2}\d\mu-(\la-\alpha)}\leq\eps,\\
		\supp(\rho_{k,1})\subseteq B(x_k,R), \text{ and }
		\supp(\rho_{k,2})\subseteq B(x_k,R_k)^\complement,
		\end{gather*}
		for certain sequences $\{x_k\}\subset X_{\bar{x}}$, $\{R_k\}\subset [0,\infty)$ with
		$R_k\to\infty$, as $k\to\infty$.
	\end{enumerate}
\end{proposition}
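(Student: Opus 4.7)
The plan is to adapt the classical proof of Lions' concentration-compactness lemma, using the Lévy concentration function $Q_n:[0,\infty)\to[0,\lambda]$ defined by $Q_n(R):=\sup_{y\in X_{\bar{x}}}\int_{B(y,R)}\rho_n\d\mu$. Each $Q_n$ is nondecreasing in $R$ and bounded above by $\lambda$, and measurability of $\vrho$ guarantees $B(y,R)\in\mathcal B$ for every $y$ and $R$. By Helly's selection theorem applied on a countable dense subset of $[0,\infty)$ (together with a standard diagonal argument), extract a subsequence, still denoted $\{Q_{n_k}\}$, converging pointwise on $[0,\infty)$ outside an at most countable set to a nondecreasing function $Q:[0,\infty)\to[0,\lambda]$, and set $\alpha:=\lim_{R\to\infty}Q(R)\in[0,\lambda]$. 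The trichotomy in the statement will correspond to $\alpha=0$, $\alpha=\lambda$, and $\alpha\in(0,\lambda)$, respectively.

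The first two cases are immediate from the definitions. If $\alpha=0$, then $Q\equiv 0$, so $Q_{n_k}(R)\to 0$ for each $R<\infty$, which is precisely vanishing. If $\alpha=\lambda$, fix $\eps>0$, choose $R=R(\eps)$ with $Q(R)>\lambda-\eps/2$, and for $k$ large take $x_k\in X_{\bar{x}}$ nearly attaining $Q_{n_k}(R)$, so that $\int_{B(x_k,R)}\rho_{n_k}\d\mu\geq\lambda-\eps$; this is tightness and hence compactness.

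In the dichotomy case $\alpha\in(0,\lambda)$, fix $\eps>0$ and pick $R=R(\eps)$ such that $Q(R)>\alpha-\eps/4$. For each $j\geq 1$, choose a continuity point $R^{(j)}\to\infty$ of $Q$ with $Q(R^{(j)})<\alpha+2^{-j}$. Then there is $k_j\geq k_{j-1}+1$ such that, for every $k\geq k_j$, both $Q_{n_k}(R)>\alpha-\eps/2$ and $Q_{n_k}(R^{(j)})<\alpha+2^{-j+1}$ hold. Define $R_k:=R^{(j)}$ for $k_j\leq k<k_{j+1}$, so that $R_k\to\infty$ and $Q_{n_k}(R_k)\to\alpha$. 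Choose $x_k\in X_{\bar{x}}$ with $\int_{B(x_k,R)}\rho_{n_k}\d\mu>\alpha-\eps$, and set
\[
\rho_{k,1}:=\rho_{n_k}\one_{B(x_k,R)},\qquad \rho_{k,2}:=\rho_{n_k}\one_{B(x_k,R_k)^\complement}.
\]
Then $\int\rho_{k,1}\d\mu\in[\alpha-\eps,\alpha+\eps]$ and $\int\rho_{k,2}\d\mu\in[\lambda-\alpha-\eps,\lambda-\alpha+\eps]$ for all sufficiently large $k$, while the $L^1$-discrepancy $\norma{\rho_{n_k}-(\rho_{k,1}+\rho_{k,2})}_{L^1}$ equals the mass of the annulus $B(x_k,R_k)\setminus B(x_k,R)$, which is at most $Q_{n_k}(R_k)-\int_{B(x_k,R)}\rho_{n_k}\d\mu\leq 2\eps$ for large $k$.

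The only nontrivial obstacle is the diagonal construction of the sequence $R_k\to\infty$ in the dichotomy case: one must extract a subsequence \emph{once and for all} based on $Q$, and then choose $\{R_k\}$ using only the pointwise convergence $Q_{n_k}(R^{(j)})\to Q(R^{(j)})$, so that the single subsequence works uniformly in $\eps$. The pseudometric (as opposed to metric) nature of $\vrho$ and the removal of the distinguished point $\bar{x}$ (which has $\mu$-measure zero) play no role in the argument: only symmetry, the triangle inequality, and measurability of balls are used, exactly as in Lions' original proof \cite{Li84}.
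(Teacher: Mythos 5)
Your proposal is correct and follows essentially the same route as the paper's own (sketched) proof: Lévy concentration functions $Q_n$, Helly selection, the trichotomy on $\alpha=\lim_{R\to\infty}Q(R)$, and the cutoffs $\rho_{k,1}=\rho_{n_k}\one_{B(x_k,R)}$, $\rho_{k,2}=\rho_{n_k}\one_{B(x_k,R_k)^\complement}$ in the dichotomy case. The only detail glossed over (by both you and the paper, which defers to Lions) is that in the compactness case the sequence $\{x_k\}$ must be chosen once, independently of $\eps$ --- fix it using $\eps_0=\la/2$ and then, for arbitrary $\eps$, use the triangle inequality for $\vrho$ to absorb any ball carrying mass $>\la/2$ into an enlargement of $B(x_k,R(\eps_0))$.
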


\noindent	The proof of Proposition \ref{prop:metric-concentration-compactness-lemma} parallels that of \cite[Lemma I.1]{Li84}, and proceeds via analysis of the sequence of 
{\it concentration functions} 
	\[Q_n\colon[0,\infty)\to\R,\quad Q_n(t):=\sup\limits_{x\in 
	X_{\bar{x}}}\int_{B(x,t)}\rho_n \d \mu.\]
The sequence $\{Q_n\}$ consists of nondecreasing, nonnegative, uniformly bounded functions on $[0,\infty)$, which satisfy $Q_n(t)\to\la$, as $t\to\infty$, since $\mu(\{\bar{x}\})=0$. Very briefly, the argument goes as follows. By the Helly Selection Principle, there exists a subsequence $\{n_k\}\subset \N$ and a nondecreasing, nonnegative function $Q\colon[0,\infty)\to\R$, such that $Q_{n_k}(t)\to Q(t)$, as $k\to\infty$, \textit{for every} $t\geq 0$. Set $\alpha:=\lim_{t\to\infty}Q(t)\in[0,\la]$, and note that:
\begin{itemize}
	\item If $\alpha=0$, then $Q\equiv0$. This translates into the vanishing condition at once.
	\item If $\alpha=\la$, then compactness occurs.
	\item If $0<\alpha<\la$, then dichotomy occurs. 
	In this case, the functions $\rho_{k,1},\rho_{k,2}$ are given by $\rho_{k,1}=\rho_{n_k}\one_{B(x_k,R)}$ and $\rho_{k,2}=\rho_{n_k}\one_{B(x_k,R_k)^\complement}$.
\end{itemize}
We omit further details and refer the interested reader to \cite{Li84}.

\smallskip

When applying Proposition \ref{prop:metric-concentration-compactness-lemma} to the study of extremizing sequences for \eqref{eq:SharpExtensionFormGenP}, the desirable outcome (with a view towards obtaining concentration at a point under the hypotheses of  Proposition \ref{prop:small-cap-implies-concentration}) is \textit{compactness} or \textit{vanishing}. Therefore the possibility of \textit{dichotomy} needs to be discarded. 
To this end, Lions proposes the {\it strict superadditivity condition} \cite[Section I.2]{Li84}, which in the present setting can be recast as follows. Define
\begin{equation}\label{eq:maximumLions}
I_\la:=\sup\{ \norma{\mathcal{E}_p(f)}_{L^6(\R^2)}^6\colon \norma{f}_{L^2(\R)}^2=\la \}.
\end{equation}
The quantity $I_\la$ is said to satisfy the strict superadditivity condition if, for every $\la>0$, 
\begin{equation}\label{eq:strongSupCond}
I_{\la}>I_{\alpha}+I_{\la-\alpha},\,\text{ for every }\alpha\in (0,\la).
\end{equation}
In our case, $\mathcal E_p$ is a linear operator, and so $I_{\la}=\la^3 I_1=\la^3\mathbf{E}_p^6$. Thus \eqref{eq:strongSupCond} translates into the elementary numerical inequality $\la^3>\alpha^3+(\la-\alpha)^3$, which holds for every $\la>0$ and $\alpha\in (0,\la)$.
As seen in the proof of Proposition \ref{prop:small-cap-implies-concentration}, it is condition \eqref{eq:strongSupCond} (applied with $\la=1$) which ensures that dichotomy does not occur.
A similar condition in a more general context is used by Lieb \cite[Lemma 2.7]{Li83}.

\section{Revisiting Br\'ezis--Lieb} 
In this appendix, we prove a useful variant of \cite[Proposition 1.1]{FVV11}, which in turn relies on the Br\'ezis--Lieb lemma \cite{BL83}.
 \cite[Proposition 1.1]{FVV11} states that, in the compact setting, the only obstruction to the strong
convergence of an extremizing sequence is weak convergence to zero.
In the non-compact setting, it is in general non-trivial to verify condition (iv) of \cite[Proposition 1.1]{FVV11}. 
To overcome this difficulty, various arguments using Sobolev embeddings and the  Rellich--Kondrachov compactness theorem  have been employed in  \cite{COS17,FVV12,Qu13,Qu17}. 
In our case,  it is not clear how such an argument would go.
 Instead we take a different route, and argue that condition (iv)  from \cite[Proposition 1.1]{FVV11} can be replaced by uniform decay of the $L^2$-norm, in a sense compactifying the space in question. The following is a precise formulation of this idea. 
\begin{proposition}\label{prop:new-fvv}
Given $p>1$, consider the Fourier extension operator 
	 $\mathcal{E}_p\colon L^2(\R)\to L^6(\R^2)$ defined in \eqref{eq:DefFSigmaHat}.
	Let $\{f_n\}\subset L^2(\R)$, and let $\Theta:[1,\infty)\to(0,\infty)$ with $\Theta(R)\to 0$, as 
	$R\to\infty$, be such that:
	\begin{enumerate}
		\item[(i)] $\norma{f_n}_{L^2(\R)}=1$, for every $n\in\N$;
		\item[(ii)] $\lim_{n\to\infty}\norma{\mathcal{E}_p(f_n)}_{L^6(\R^2)}={\bf E}_p$; 
		\item[(iii)] $f_n\weak f\neq 0$, as $n\to\infty$;
		\item[(iv)] $\norma{f_n}_{L^2([-R,R]^\complement)}\leq \Theta(R)$, for every $n\in\N$ and $R\geq 1$.
	\end{enumerate}
	Then $f_n\to f$ in $L^2(\R)$, as $n\to\infty$. In particular, $\norma{f}_{L^2(\R)}=1$ and 
	$\norma{\mathcal{E}_p(f)}_{L^6(\R^2)}={\bf E}_p$, and so $f$ is an extremizer of \eqref{eq:SharpExtensionFormGenP}.
\end{proposition}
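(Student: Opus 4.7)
The plan is to reduce the problem to a Br\'ezis--Lieb type splitting in $L^6(\R^2)$, namely
\begin{equation*}
\|\mathcal{E}_p(f_n)\|_{L^6(\R^2)}^6 - \|\mathcal{E}_p(f_n-f)\|_{L^6(\R^2)}^6 \to \|\mathcal{E}_p(f)\|_{L^6(\R^2)}^6,
\end{equation*}
which, given the uniform $L^6$-bound provided by \eqref{eq:SharpExtensionFormGenP} together with assumption (ii), will follow from the classical Br\'ezis--Lieb lemma as soon as a subsequence along which $\mathcal{E}_p(f_n) \to \mathcal{E}_p(f)$ a.e.\ on $\R^2$ is exhibited. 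Writing $g_n := f_n - f$, I will use (iii) to note that $g_n \rightharpoonup 0$ weakly in $L^2(\R)$, and weak lower semicontinuity applied to hypothesis (iv) to propagate the uniform decay, giving $\|g_n\|_{L^2([-R,R]^\complement)} \leq 2\Theta(R)$ for every $n$ and $R\geq 1$.

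The hard part will be establishing the a.e.\ convergence $\mathcal{E}_p(g_n) \to 0$ along a subsequence, because the test function $y \mapsto e^{-ixy}e^{-it|y|^p}|y|^{(p-2)/6}$ implicit in the extension pairing fails to lie in $L^2(\R)$ globally, so weak convergence of $g_n$ does not directly control $\mathcal{E}_p(g_n)(x,t)$ pointwise. I will circumvent this by truncating at scale $R \geq 1$: the piece $\mathcal{E}_p(g_n \one_{[-R,R]})(x,t)$ is a pairing of $g_n$ against a bona fide element of $L^2(\R)$ (since $(p-2)/3 > -1$ for all $p>1$), so it tends to $0$ pointwise in $(x,t)$ by weak convergence, and is uniformly bounded by some constant $C_R$; the tail piece $\mathcal{E}_p(g_n \one_{[-R,R]^\complement})$ is controlled in $L^6(\R^2)$ by $2{\bf E}_p\,\Theta(R)$ thanks to \eqref{eq:SharpExtensionFormGenP}. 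On any compact $K \subset \R^2$, dominated convergence handles the first contribution and Chebyshev's inequality the second, yielding
\begin{equation*}
\limsup_n \ab{\{(x,t) \in K : |\mathcal{E}_p(g_n)(x,t)| > \delta\}} \lesssim \delta^{-6}\,\Theta(R)^6,
\end{equation*}
for every $\delta, R > 0$. Letting $R \to \infty$ gives convergence in measure on $K$, and a diagonal extraction over an exhaustion of $\R^2$ by compact sets produces a subsequence along which $\mathcal{E}_p(g_n) \to 0$ a.e.\ on $\R^2$, validating the splitting above.

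To close the argument, set $\beta^2 := \lim_n \|g_n\|_{L^2}^2 = 1 - \|f\|_{L^2}^2$, which holds by orthogonality of the weak limit together with (i). Combining the splitting with (ii) and the sharp extension estimate $\|\mathcal{E}_p(h)\|_{L^6}^6 \leq {\bf E}_p^6 \|h\|_{L^2}^6$ yields
\begin{equation*}
{\bf E}_p^6 = \|\mathcal{E}_p(f)\|_{L^6}^6 + \lim_n \|\mathcal{E}_p(g_n)\|_{L^6}^6 \leq {\bf E}_p^6\bigl(\|f\|_{L^2}^6 + \beta^6\bigr) = {\bf E}_p^6\bigl((1-\beta^2)^3 + \beta^6\bigr).
\end{equation*}
Setting $t := \beta^2 \in [0,1]$, this rearranges to $(1-t)^3 + t^3 \geq 1$, which forces equality in the elementary identity $(1-t)^3 + t^3 = 1 - 3t(1-t)\leq 1$ and hence $t \in \{0,1\}$. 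Hypothesis (iii) gives $\|f\|_{L^2} > 0$, i.e.\ $t < 1$, so necessarily $t = 0$, that is, $\|f\|_{L^2} = 1$. Combined with the weak convergence $f_n \rightharpoonup f$ in $L^2(\R)$, this upgrades to the desired strong convergence $f_n \to f$.
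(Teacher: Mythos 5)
Your argument is correct and follows essentially the same route as the paper's proof: truncate at scale $R$, use weak convergence to get pointwise convergence of the extension of the compactly supported piece, use hypothesis (iv) to control the tail in $L^6$, apply the Br\'ezis--Lieb lemma, and conclude via the strict superadditivity of $t\mapsto t^3$ together with $f\neq 0$. The only organizational difference is that you send $R\to\infty$ first, upgrading to a.e.\ convergence of $\mathcal{E}_p(f_n-f)$ along a subsequence via convergence in measure and a diagonal extraction, whereas the paper applies Br\'ezis--Lieb for each fixed $R$ to the modified sequence $f_n-(f_n-f)\one_{[-R,R]^\complement}$ and lets $R\to\infty$ only in the final numerical inequality.
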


\noindent This variant was already observed in \cite[Proposition 2.31]{Qu12} for the case of the cone, and the proof follows similar lines to that of \cite[Proposition 1.1]{FVV11}. 
Note that the function $\Theta$ may depend on the sequence $\{f_n\}$, but \textit{not} on $n$. 
The following proof is inspired by \cite[Proposition 2.2]{FLS16}.

\begin{proof}[Proof of Proposition \ref{prop:new-fvv}]
	Denote 
	$r_n:=f_n-f$. Then 
	$r_n\weak 0$, as 
	$n\to\infty$, and thus $m:=\lim_{n\to\infty}\norma{r_n}_{L^2}^2$ exists and satisfies 
	$1=\norma{f}_{L^2}^2+m$. 
	Given $R>0$, decompose 
	$$r_n=r_n\one_{[-R,R]}+r_n\one_{[-R,R]^\complement}=:r_{n,1}+r_{n,2}.$$
	Since the support of 
	$r_{n,1}$ is compact and $r_{n,1}\weak 0$, as $n\to\infty$, then $\mathcal{E}_p(r_{n,1})\to 0$ pointwise a.e. in $\R^2$, as 
	$n\to\infty$. 
	On the other hand, from condition (iv) we have that 
	\begin{equation}\label{eq:BoundsTrn2}
	\norma{\mathcal{E}_p(r_{n,2})}_{L^6}\leq {\bf E}_p(\Theta(R)+\norma{f}_{L^2([-R,R]^\complement)}),
	\end{equation}
	for every $R\geq 1$.
	This upper bound is independent of $n$, and tends to $0$ as $R\to\infty$.
	We have $\mathcal{E}_p(f_n-r_{n,2})=\mathcal{E}_p(f)+\mathcal{E}_p(r_{n,1})$, and 
	$\norma{\mathcal{E}_p(f_n-r_{n,2})}_{L^6}\leq 
	{\bf E}_p(1+\Theta(R)+\norma{f}_{L^2([-R,R]^\complement)})$ is uniformly 
	bounded in $n$. Since $\mathcal{E}_p(f_n-r_{n,2})\to \mathcal{E}_p(f)$ pointwise a.e. in $\R^2$, as $n\to\infty$, we can invoke the 
	Br\'ezis--Lieb lemma \cite{BL83} and obtain
	\[ 
	\norma{\mathcal{E}_p(f_n-r_{n,2})}_{L^6}^6=\norma{\mathcal{E}_p(f)}_{L^6}^6+\norma{\mathcal{E}_p(r_{n,1})}_{L^6}^6+o(1),\text{
	 as 
	}n\to\infty. \]
	It follows that $\mu:=\limsup_{n\to\infty}\norma{\mathcal{E}_p(r_{n,1})}_{L^6}^6$ and $\la:= 
	\limsup_{n\to\infty}\norma{\mathcal{E}_p(f_n-r_{n,2})}_{L^6}^6$ 
	satisfy 
	\[ \la= \norma{\mathcal{E}_p(f)}_{L^6}^6 +\mu. \]
	Since $\norma{\mathcal{E}_p(r_{n,1})}_{L^6}^6\leq {\bf E}_p^6\norma{r_{n,1}}_{L^2}^6\leq 
	{\bf E}_p^6\norma{r_{n}}_{L^2}^6$, we have $\mu\leq{\bf E}_p^6m^3$. Therefore
	\[ \la= \norma{\mathcal{E}_p(f)}_{L^6}^6+\mu\leq  
	\norma{\mathcal{E}_p(f)}_{L^6}^6+{\bf E}_p^6(1-\norma{f}_{L^2}^2)^3.\]
	Thus, 
	replacing the definition of $\la$, 
	we have proved
	\begin{equation}\label{eq:has-been-proved}
	\limsup_{n\to\infty}\norma{\mathcal{E}_p(f_n-r_{n,2})}_{L^6}^6\leq 
	\norma{\mathcal{E}_p(f)}_{L^6}^6+{\bf E}_p^6(1-\norma{f}_{L^2}^2)^3,
	\end{equation}
	for every $R\geq 1$. 
	Now, 
	$\norma{\mathcal{E}_p(f_n-r_{n,2})}_{L^6}\geq \norma{\mathcal{E}_p(f_n)}_{L^6}-\norma{\mathcal{E}_p(r_{n,2})}_{L^6}$ and
	$\norma{\mathcal{E}_p(r_{n,2})}_{L^6}$ is bounded above as quantified by \eqref{eq:BoundsTrn2}. Thus
	\[ \limsup_{n\to\infty}\norma{\mathcal{E}_p(f_n-r_{n,2})}_{L^6}\geq 
	{\bf E}_p-{\bf E}_p(\Theta(R)+\norma{f}_{L^2([-R,R]^\complement)}), \]
	for every $R\geq 1$. Using this together with \eqref{eq:has-been-proved}, and letting 
	$R\to\infty$, yields 
	\[{\bf E}_p^6\leq 
	\norma{\mathcal{E}_p(f)}_{L^6}^6+{\bf E}_p^6(1-\norma{f}_{L^2}^2)^3. \]
	By the elementary inequality $(1-t)^3\leq 1-t^3$, valid for every $t\in[0,1]$, we then have
	\[ {\bf E}_p^6\leq 
	\norma{\mathcal{E}_p(f)}_{L^6}^6+{\bf E}_p^6(1-\norma{f}_{L^2}^6). \]
	Since the reverse inequality holds by definition, we conclude that $f$ is an extremizer. 
	Moreover, since $f\neq 0$ and the elementary inequality is strict unless 
	$t\in\{0,1\}$, we conclude that $\norma{f}_{L^2}=1$.
	This completes the proof of the proposition.
\end{proof}

\end{document}